\title{Picker-Chooser fixed graph games}
\author{
Ma\l gorzata Bednarska-Bzd\c ega
\thanks{Faculty of Mathematics and CS, Adam Mickiewicz University, Pozna\'n, Poland. Email: mbed@amu.edu.pl}
\and Dan Hefetz
\thanks{School of Mathematics, University of Birmingham, Edgbaston, Birmingham B15 2TT, United
Kingdom. Email: danny.hefetz@gmail.com. Research supported by EPSRC grant EP/K033379/1.} \and Tomasz \L uczak
\thanks{Faculty of Mathematics and CS, Adam Mickiewicz University, Pozna\'n, Poland. Email: tomasz@amu.edu.pl. Research supported by NCN grant 2012/06/A/ST1/00261.} 
}
\newif\ifnotesw\noteswtrue
\renewcommand{\deg}{\text{\rm deg}}
\newcommand{\E}{{\mathbb E}}
\newcommand{\inter}{{\mathcal I}}
\newcommand{\eps}{\varepsilon}
\newcommand{\tS}{\tilde{S}_C}
\newtheorem{theorem}{Theorem}[section]
\newtheorem{lemma}[theorem]{Lemma}
\newtheorem{proposition}[theorem]{Proposition}
\newtheorem{corollary}[theorem]{Corollary}
\newtheorem{conjecture}[theorem]{Conjecture}
\newenvironment{proof}{\noindent{\bf Proof\,}}{\hfill$\Box$\medskip}
\begin{document}
\maketitle
  
\begin{abstract}
Given a fixed graph $H$ and a positive integer $n$, a Picker-Chooser $H$-game is a biased game played on the edge set of $K_n$ in which Picker is trying to force many copies of $H$ and Chooser is trying to prevent him from doing so. In this paper we conjecture that the value of the game is roughly the same as the expected number of copies of $H$ in the random graph $G(n,p)$ and prove our conjecture for special classes of graphs $H$ such as complete graphs and trees.
\end{abstract}

\section{Introduction} \label{intro}

A Waiter-Client game is a positional game which was first defined and studied by Beck under the name of Picker-Chooser (see, e.g.~\cite{becksec}). Let $a$ and $b$ be positive integers, let $X$ be a finite set and let ${\mathcal F}$ be a family of subsets of $X$. A biased $(b : 1)$ Waiter-Client game $(X, {\mathcal F})$ is defined as follows. The game proceeds in rounds. In each round, Waiter selects exactly $b+1$ \emph{free} elements of $X$ (that is, elements he has not previously selected) and offers them to Client. Client then selects one of these elements which he keeps and the remaining $b$ elements are claimed by Waiter. If at some point during the game at most $b$ free board elements remain, then all these elements go to Waiter. Waiter's goal is to maximize the number of sets $A \in {\mathcal F}$ whose elements were all claimed by Client by the end of the game, whereas Client aims to minimize this quantity. The set $X$ is referred to as the \emph{board} of the game and the elements of ${\mathcal F}$ are referred to as the \emph{winning sets}. The {\em value} of the game is the number of sets $A \in {\mathcal F}$ whose elements were all claimed by Client by the end of the game, assuming perfect play by both players.     

The interest in such  games is three-fold. Firstly, they are interesting in their own right. For example, the case where Waiter plays randomly is the well-known Achlioptas process (without replacement). Many randomly played Waiter-Client games were considered in the literature, often under different names (see, e.g.~\cite{KLS, KSS, MST}). Secondly, they exhibit a strong probabilistic intuition (see, e.g.~\cite{becksec, TTT}). That is, the outcome of many natural positional games of this type is often roughly the same as it would be had both players played randomly (although, typically, a random strategy for any single player is very far from optimal). The main results of this paper form a natural new example of this intriguing phenomenon. Lastly, it is believed that these games may be useful in the analysis of the so-called \emph{Maker-Breaker} games.

A biased $(1 : b)$ Maker-Breaker game $(X, {\mathcal F})$ is defined as follows. Two players, called Maker and Breaker, take turns in claiming previously unclaimed elements of $X$; usually Maker is the first player. Maker claims exactly one board elements per turn and Breaker claims exactly $b$. Here too the value of the game is the number of sets $A \in {\mathcal F}$ whose elements were all claimed by Maker by the end of the game, assuming perfect play by both players. Maker's goal is to maximize the value of the game, whereas Breaker aims to minimize it.

It was suggested by Beck~\cite{becksec} and subsequently formally conjectured by Csernenszky, M\'andity and Pluh\'ar in~\cite{cdm} that ``being Waiter is not harder than being Maker''. That is, whenever Maker (as the second player) has a winning strategy for the $(1:1)$ Maker-Breaker game $(X, {\mathcal F})$, Waiter has a winning strategy for the $(1:1)$ Waiter-Client game $(X, {\mathcal F})$. Though, in its full generality, this conjecture was recently refuted by Knox~\cite{Knox}, it is still plausible that understanding Waiter-Client games is helpful in the study of Maker-Breaker games. In particular, it was proved in~\cite{pc} that a version of Beck's conjecture which applies to biased games as well, holds in certain special cases. 

We remark that Waiter-Client games are also related to a well-known mis\`ere version of Maker-Breaker games, the so-called Avoider-Enforcer games, in which Enforcer aims to force Avoider to claim as many sets $A \in {\mathcal F}$ as possible (for more information on these games see, for instance,~\cite{HKSae, HKSSae}).

From here on we restrict our attention to fixed graph games. Let $H$ be a graph and let $n$ be a positive integer. The board of the \emph{$H$-game} is the edge set $\binom{[n]}{2}$ of the complete graph on $n$ vertices and the family of winning sets ${\mathcal F}_H$ consists of the edge sets of all copies of $H$ in $K_n$. Let us denote the value of such a biased $(b : 1)$ Waiter-Client game by $S(H,n,b)$ and the value of the analogous Maker-Breaker game by $S_{\textrm{MB}}(H,n,b)$.

Let us first report the known results regarding $S_{\textrm{MB}}(H,n,b)$. Before doing so, let us recall that, as we have already mentioned, often the outcome of a positional game is roughly the same as it would be had both players played randomly. Since the densities of the graphs built by Client and by Maker by the end of the game are the same and are equal to $1/(1+b)$, it would be useful to determine the number of copies of $H$ in the random graph $G(n,1/(1+b))$, where $G(n,p)$ denotes the random graph in which each pair from $\binom{[n]}{2}$ is present independently with probability $p$. It turns out that this number depends mainly on the density of $H$. Hence, let us introduce two measures of density of a graph $H$, both of which are crucial for Waiter-Client $H$-games as well. The maximum density $m(H)$ is defined to be
$$
m(H) = \max \left\{\frac{e(H')}{v(H')} : H' \subseteq H, v(H') \geq 1 \right\} \,, 
$$
where here and throughout the paper $v(G)$ and $e(G)$ denote the number of vertices and edges of $G$ respectively. We shall also use the maximum 2-density $m_2(H)$ of $H$, where 
$$
m_2(H) = \max \left\{\frac{e(H') - 1}{v(H') - 2} : H' \subseteq H, v(H') \geq 3 \right\} \,.
$$

The following result is known (see \cite{B, R}).

\begin{theorem} [\cite{B, R}] \label{th::Gnp}
For every graph $H$ with at least one edge the following holds. If $n p^{m(H)} \to 0$, then a.a.s. $G(n,p)$ contains no copies of $H$. On the other hand, if $n p^{m(H)} \to \infty$, then a.a.s. $G(n,p)$ contains $(c_H + o(1)) n^{v(H)} p^{e(H)}$ copies of $H$, for some constant $c_H > 0$.
\end{theorem}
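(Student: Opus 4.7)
The plan is to handle the two parts separately using the first and second moment methods. For the zero-statement, assume $np^{m(H)}\to 0$ and let $H^\ast\subseteq H$ be a densest subgraph, so that $e(H^\ast)/v(H^\ast)=m(H)$. Every copy of $H$ in $G(n,p)$ contains a copy of $H^\ast$, so it suffices to show that a.a.s.\ $G(n,p)$ contains no copy of $H^\ast$. If $Y$ denotes the number of labeled copies of $H^\ast$, then
$$
\E[Y]=\Theta\bigl(n^{v(H^\ast)}p^{e(H^\ast)}\bigr)=\Theta\bigl((np^{m(H)})^{v(H^\ast)}\bigr)\to 0,
$$
and Markov's inequality gives $\Pr(Y\ge 1)\to 0$.

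For the one-statement, let $X$ be the number of (unlabeled) copies of $H$ in $G(n,p)$. A direct count gives $\E[X]=(c_H+o(1))n^{v(H)}p^{e(H)}$, where $c_H=1/|\mathrm{Aut}(H)|$. I would then bound $\mathrm{Var}(X)$ by writing $X=\sum_\alpha \mathbf{1}_\alpha$, with $\alpha$ ranging over the potential copies of $H$, and grouping pairs $(\alpha,\beta)$ according to the isomorphism type of the intersection graph $J=\alpha\cap\beta$. For each $J$ with $e(J)\ge 1$, the number of such pairs is $O(n^{2v(H)-v(J)})$ and each contributes at most $p^{2e(H)-e(J)}$ to the covariance sum, so
$$
\frac{\mathrm{Var}(X)}{\E[X]^2}=O\!\left(\sum_J\frac{1}{n^{v(J)}p^{e(J)}}\right)=O\!\left(\sum_J\bigl(np^{e(J)/v(J)}\bigr)^{-v(J)}\right),
$$
where the sum runs over finitely many intersection types (depending only on $H$). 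Since $J\subseteq H$ implies $e(J)/v(J)\le m(H)$, each summand is bounded by $(np^{m(H)})^{-v(J)}=o(1)$. Chebyshev's inequality then yields $X=(1+o(1))\E[X]$ a.a.s., which combined with the estimate for $\E[X]$ gives the stated asymptotics.

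The main technical step is the variance estimate in the second part: one must enumerate the possible intersection types $J$ and verify that $m(H)$ is exactly the density parameter that dominates every covariance contribution, through the defining property $e(J)/v(J)\le m(H)$ for every $J\subseteq H$ with $v(J)\ge 1$. Observe that the other density parameter $m_2(H)$ introduced just above the theorem plays no role here; it will become relevant only for the later game-theoretic results, where a fixed edge is distinguished at the outset and the natural density is measured relative to that edge.
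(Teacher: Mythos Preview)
The paper does not actually prove this theorem; it is quoted as a known result from the literature (Bollob\'as~\cite{B} and Ruci\'nski~\cite{R}) and is used only as background motivation for the random-graph heuristic. Your argument is correct and is precisely the standard first/second moment proof that appears in those references and in textbooks such as~\cite{JLR}: Markov on a densest subgraph for the zero-statement, and Chebyshev with the covariance grouped by intersection type for the one-statement, the key point being that $e(J)/v(J)\le m(H)$ for every $J\subseteq H$, so each term $n^{-v(J)}p^{-e(J)}$ is $o(1)$ once $np^{m(H)}\to\infty$.
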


In~\cite{mbtl} the authors studied the threshold value of $b$ for which $S_{\textrm{MB}}(H,n,b)>0$. It is a straightforward consequence of the probabilistic argument presented there and of Theorem~\ref{th::Gnp}, that the value of the game rapidly grows when $b = \Theta(n^{1/m_2(H)})$. Formally, the following result, which is implicit in~\cite{mbtl}, can be derived.

\begin{theorem}[\cite{mbtl}] \label{th::BL}
For every graph $H$ with  at least three non-isolated vertices there are positive constants $c'$, $c''$, $\beta^-$, and $\beta^+$, such that the following holds. If $b \geq c' n^{1/m_2(H)}$ then 
$$
S_{\textrm{MB}}(H,n,b) = 0 \,.
$$

On the other hand if $b \leq c'' n^{1/m_2(H)}$, then 
$$
\beta^- n^{v(H)} (b+1)^{-e(H)} \leq  S_{\textrm{MB}}(H,n,b) \leq \beta^+  n^{v(H)} (b+1)^{-e(H)} \,.
$$
\end{theorem}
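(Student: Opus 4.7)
The plan is to treat the two regimes separately. The first part --- that $b \geq c' n^{1/m_2(H)}$ suffices for Breaker to prevent Maker from completing even a single copy of $H$ --- is precisely the threshold theorem of Bednarska and \L uczak from~\cite{mbtl}. Its proof uses a standard Beck-style potential argument in which Breaker maintains a danger function summing geometric weights over partial copies of every subgraph $H' \subseteq H$; the hypothesis $b \geq c' n^{1/m_2(H)}$ is exactly what makes the initial potential small enough for Breaker to keep it bounded and block every copy. Below I focus on the range $b \leq c'' n^{1/m_2(H)}$.

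For the lower bound, I would let Maker play the oblivious random strategy of~\cite{mbtl}: in each round Maker claims a uniformly random free edge, independently of Breaker's previous moves. Since the game lasts $\binom{n}{2}/(b+1) + O(1)$ rounds, $\Theta(n^2)$ edges remain free at every moment. Fix any labeled copy $H_0$ of $H$ in $K_n$. A deferred-decisions argument shows that any fixed edge is claimed by Maker with probability $\Theta(1/(b+1))$, and more generally any fixed set of $k$ edges is jointly claimed with probability $\Theta((b+1)^{-k})$, up to corrections controlled by the densest subgraph of $H$. The hypothesis $b \leq c'' n^{1/m_2(H)}$ is exactly what ensures that these dense-subgraph corrections do not overwhelm the main term, yielding $\Pr[\text{Maker claims all edges of } H_0] \geq c_H (b+1)^{-e(H)}$. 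Summing over the $\Theta(n^{v(H)})$ copies of $H$ in $K_n$ gives an expected number of Maker-copies at least $\beta^- n^{v(H)}(b+1)^{-e(H)}$, so some realization of Maker's random strategy attains this bound against any Breaker.

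For the upper bound, Breaker plays a potential strategy in the spirit of Erd\H{o}s--Selfridge. To each copy $H'$ of $H$ in $K_n$ assign weight $w_t(H') = \mu^{|E(H') \cap M_t|}$ if $H'$ contains no Breaker edge after round $t$, and $0$ otherwise, where $M_t$ is Maker's edge set after round $t$ and $\mu$ is a suitable constant multiple of $b+1$. Let $\phi_t = \sum_{H'} w_t(H')$, so $\phi_0 = \Theta(n^{v(H)})$. After Maker's move in round $t+1$, Breaker greedily selects $b$ edges to maximize the drop in potential; comparing Maker's potential increase through the edge he just claimed with Breaker's decrease through his $b$ greedily chosen edges yields $\phi_{t+1} \leq \phi_t$. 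Therefore $\phi$ stays $O(n^{v(H)})$ throughout, and since every fully Maker-claimed copy of $H$ contributes $\mu^{e(H)} = \Theta((b+1)^{e(H)})$ to $\phi$, the final count of such copies is $O(n^{v(H)}(b+1)^{-e(H)})$.

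The main obstacle is the lower bound: controlling the joint probability that the random Maker claims all $e(H)$ edges of a prescribed copy $H_0$ requires handling the dependencies between the individual edge-claims, and it is precisely here that $m_2(H)$ enters, since the condition $b \leq c'' n^{1/m_2(H)}$ is tight for ensuring that contributions from denser substructures of $H$ remain subdominant. The upper bound reduces to a routine potential calculation, and the threshold case $b \geq c' n^{1/m_2(H)}$ is already on record in~\cite{mbtl}.
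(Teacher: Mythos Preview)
The paper does not prove Theorem~\ref{th::BL}; it is quoted from~\cite{mbtl} (with the remark that the stated form ``is implicit in~\cite{mbtl}'' and ``can be derived''), and no argument is supplied in the present paper. So there is no in-paper proof to compare your proposal against.

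That said, your outline matches the architecture of~\cite{mbtl}: random Maker for the lower bound, and weight/potential arguments for the upper bound and for the Breaker threshold. One caution on the lower bound: the step ``any fixed set of $k$ edges is jointly claimed with probability $\Theta((b+1)^{-k})$'' is not how the argument in~\cite{mbtl} actually runs, and as written it glosses over the real difficulty. Bednarska and {\L}uczak do not compute the probability that a \emph{given} copy of $H$ is fully claimed; instead they couple Maker's random graph with a uniform random graph on the same number of edges and then count copies of $H$ there using second-moment/variance estimates in which the subgraph densities $e(H')/v(H')$ (and ultimately $m_2(H)$) control the correlation terms. Your ``deferred-decisions'' formulation would need substantial work to be made rigorous, whereas the coupling-to-$G(n,M)$ route is what actually delivers the bound. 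The upper bound via an Erd\H{o}s--Selfridge-type potential is the standard approach and is fine at the level of an outline.
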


Thus, somewhat unexpectedly, Maker cannot create even a single copy of $H$ until the density of his graph grows to the value which would  guarantee that the number of copies of $H$ in the random graph is as large as the number of its edges. However, soon afterwards, Maker can build roughly the same number of copies of $H$ as the expected number of such copies in $G(n,p)$ with the same density as his graph (i.e. with $p = 1/(b+1)$).

The main purpose of this paper is to show that the behaviour of Waiter-Client $H$-games is quite different and that the value of the game grows almost exactly as suggested by the random graph heuristic. Let us start by stating the following simple corollary of Beck's potential method which we will prove in Section~\ref{sec::trees}.

\begin{theorem} \label{th::BESH}
For every graph $H$  with at least one edge there are positive constants $c_H$ and $c'_H$ such that the following holds.
\begin{description}
\item [(i)] $S(H,n,b) \leq c_H \cdot n^{v(H)} (b+1)^{-e(H)}$ for every $b \geq 1$.
\item [(ii)] If $b \geq c'_H \cdot n^{1/m(H)}$ then $S(H,n,b)=0$.
\end{description}
\end{theorem}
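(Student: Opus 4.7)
My plan is to prove (i) by an Erd\H{o}s--Selfridge-style potential argument adapted to Waiter--Client games, and then to deduce (ii) as a quick consequence by passing to the densest subgraph of $H$.

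For (i), at any point of the game let $C$ and $W$ denote the edges already claimed by Client and Waiter; call a copy $K$ of $H$ in $K_n$ \emph{alive} if $E(K) \cap W = \emptyset$, and define the potential
$$
T \;=\; \sum_{K \text{ alive}} (b+1)^{-|E(K) \setminus C|}.
$$
Initially $T_0 \le n^{v(H)}(b+1)^{-e(H)}$, while at the end of the game $T$ equals precisely the number of copies of $H$ entirely contained in $C$. The heart of the argument is to show that Client can keep $T$ non-increasing: if Waiter offers $e_0,\ldots,e_b$ in a round and $T_i$ is the resulting potential when Client picks $e_i$, it suffices to establish $\sum_{i=0}^{b} T_i \le (b+1) T$ and then have Client minimise $T_i$. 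This inequality follows by splitting each alive copy $K$ according to $u_K := |E(K) \cap \{e_0,\ldots,e_b\}|$: for $u_K \in \{0,1\}$ the copy contributes $(b+1)(b+1)^{-|E(K)\setminus C|}$ to $\sum_i T_i$ (either by surviving every pick with unchanged weight, or by having its weight multiplied by $b+1$ under the unique pick of a $K$-edge and dying otherwise), while for $u_K \ge 2$ the copy dies under every pick and contributes $0$. A last round with fewer than $b+1$ free edges sends them all to Waiter, which only decreases $T$, so $T_{\text{end}} \le T_0$ delivers (i).

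For (ii), fix a subgraph $H^* \subseteq H$ with $v(H^*) \ge 2$ realising $m(H) = e(H^*)/v(H^*)$. Since every copy of $H$ in $K_n$ contains a copy of $H^*$, any Client strategy avoiding all copies of $H^*$ also avoids all copies of $H$, whence $S(H,n,b) \le S(H^*,n,b)$. Applying (i) to $H^*$ and using the hypothesis $b \ge c'_H n^{1/m(H)} = c'_H n^{v(H^*)/e(H^*)}$, we obtain $S(H^*,n,b) \le (c'_H)^{-e(H^*)}$, which is strictly less than $1$ once $c'_H$ is chosen large enough depending on $H$. Being a non-negative integer, $S(H^*,n,b)$ must then equal $0$, and so must $S(H,n,b)$. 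The only mildly subtle point in the whole plan is recognising that the Waiter--Client rules really do force the $u_K \ge 2$ contribution to vanish, since this is exactly what makes the averaging $\min_i T_i \le \frac{1}{b+1}\sum_i T_i \le T$ tight enough to yield the stated bound; the rest is a routine implementation of Beck's potential method combined with the standard densest-subgraph reduction.
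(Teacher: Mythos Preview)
Your proof is correct and takes essentially the same approach as the paper: the potential argument in (i) is precisely the content of Proposition~\ref{prop::BES} (which the paper invokes as a black box), and (ii) is the same densest-subgraph reduction followed by the observation that the resulting bound is below~$1$. One small remark: the general inequality $S(H,n,b)\le S(H^*,n,b)$ you assert in (ii) is not what the clause preceding your ``whence'' actually justifies (a single copy of $H^*$ can sit inside many copies of $H$), but since you only use the implication $S(H^*,n,b)=0\Rightarrow S(H,n,b)=0$, which that clause does establish, the argument goes through unchanged.
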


We conjecture that the upper bound on $S(H,n,b)$, given in Theorem~\ref{th::BESH}(i), is tight up to a constant factor, i.e. that the following general conjecture holds for every graph $H$. 

\begin{conjecture} \label{conj}
For every graph $H$ with at least one edge, there are positive constants $c, \alpha^-$ and $\alpha^+$ such that
$$
\alpha^- n^{v(H)} (b+1)^{-e(H)} \leq S(H,n,b) \leq \alpha^+ n^{v(H)} (b+1)^{-e(H)} \,,
$$
provided that $b \leq c \cdot n^{1/m(H)}$. 
\end{conjecture}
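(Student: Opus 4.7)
The upper bound in Conjecture~\ref{conj} is a direct restatement of Theorem~\ref{th::BESH}(i), so the substance of the conjecture lies in the lower bound $S(H,n,b) \geq \alpha^- n^{v(H)} (b+1)^{-e(H)}$ valid for $b \leq c\cdot n^{1/m(H)}$. Since this target quantity is, up to a constant, the expected number of copies of $H$ in $G(n, 1/(b+1))$ furnished by Theorem~\ref{th::Gnp}, the content is precisely that Waiter can force Client's graph to behave like a uniformly random graph of edge density $1/(b+1)$ with respect to counts of copies of $H$.

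My plan is to design a strategy for Waiter based on a multiplicative potential function, dual to the one underlying Theorem~\ref{th::BESH}(i). For each injective map $\varphi \colon V(H) \to [n]$ let $K_\varphi$ denote the labelled copy of $H$ in $K_n$ corresponding to $\varphi$. After $t$ rounds, assign to each edge $f \in \binom{[n]}{2}$ a weight $w_t(f)$ equal to $1$ if $f$ is owned by Client, equal to $0$ if $f$ is owned by Waiter, and equal to a parameter $\gamma \in (0,1)$ slightly larger than $1/(b+1)$ if $f$ is still free. Set
$$
\Phi(t) \;=\; \sum_\varphi \prod_{f \in E(K_\varphi)} w_t(f) \,.
$$
Initially $\Phi(0) = \Theta(n^{v(H)})\cdot \gamma^{e(H)} = \Theta(n^{v(H)}(b+1)^{-e(H)})$, while at the end of the game $\Phi$ equals the number of copies of $H$ in Client's graph (all remaining weights are $0$ or $1$). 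It therefore suffices to exhibit an offering strategy that keeps $\Phi$ within a constant factor of $\Phi(0)$ throughout.

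In each round, Waiter offers $b+1$ free edges $f_1,\ldots,f_{b+1}$; afterwards one of their weights jumps to $1$ and the remaining $b$ drop to $0$. Had Client selected a uniformly random $f_i$, the expected weight of each $f_i$ would become $1/(b+1) < \gamma$, and one can arrange that the expected change in $\Phi$ is non-negative. Waiter's task is then to pick an offering whose worst case (adversarial Client) approximates the average (uniform Client) up to a lower-order error. A natural approach is a local exchange argument over the free edges: if some candidate $f_i$ would cause a disproportionate drop in $\Phi$ when selected by Client, swap it for another free edge whose pick-weight is closer to the mean. Controlling the success of such exchanges reduces to showing that no small edge-set contributes too much to $\Phi$ relative to its size, a condition that one translates into bounds on extension counts of partial copies in $K_n$.

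The main obstacle, and the source of the hypothesis $b \leq c\cdot n^{1/m(H)}$, is controlling this worst-case--average-case gap uniformly over all rounds. For any subgraph $H' \subseteq H$ the expected number of completions of a fixed partial copy of $H'$ to a copy of $H$ in $G(n,1/(b+1))$ is of order $n^{v(H)-v(H')}(b+1)^{-(e(H)-e(H'))}$, which is $\Omega(1)$ precisely when $b \leq c\cdot n^{1/m(H)}$. Below the threshold there is room ``at every scale'' for Waiter to rebalance his offerings, whereas above it a single adversarial Client move can wipe out a constant fraction of $\Phi$. Turning this intuition into a full proof for general $H$ will likely require a refined potential weighting each partial copy by a factor depending on the density of the realised sub-structure rather than the uniform $\gamma$ above; such a hierarchical weighting is implicit in the tree and complete-graph arguments later in this paper, but making it work uniformly for arbitrary $H$ appears to be the crux.
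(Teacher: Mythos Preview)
This statement is labelled \emph{Conjecture} in the paper precisely because the paper does \emph{not} prove it in general. The paper establishes only special cases: the upper bound (Theorem~\ref{th::BESH}(i)), the lower bound for trees (Theorem~\ref{th::correctNumberOfTrees}), the lower bound for all $H$ in the restricted range $b \leq c\,n^{1/m_2(H)}$ (Theorem~\ref{th::largeb}), and the lower bound for cliques $K_k$ with $k \neq 5$ (Theorem~\ref{th::manyK}). There is therefore no ``paper's own proof'' of Conjecture~\ref{conj} to compare against; what the paper offers instead is a toolkit (the Big Family Theorem~\ref{th::smallFamily}, the sparse $H$-family machinery of Section~\ref{sec::probTools}, and the extension argument of Section~\ref{sec::cliques}) together with an explicit discussion of why that toolkit does not cover every $H$ and every $b$.

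Your proposal is not a proof but a programme, and you correctly flag its crux yourself. The genuine gap is the step where Waiter must, in \emph{every} round, select $b+1$ free edges $f_1,\dots,f_{b+1}$ so that the \emph{worst} Client response decreases $\Phi$ by at most a $(1-o(1))$ factor. Your local exchange heuristic does not supply this: after many rounds the free edges can have wildly varying contributions to $\Phi$ (an edge lying in many nearly-complete Client copies of $H$ carries enormous weight, one lying in none carries almost none), and nothing in your argument prevents the situation where every set of $b+1$ free edges contains one whose removal costs a constant fraction of $\Phi$. The observation that extension counts are $\Omega(1)$ when $b \leq c\,n^{1/m(H)}$ is a statement about the \emph{initial} potential landscape, not about the landscape after Client has played adversarially for many rounds. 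Indeed, the paper's successful arguments for special cases do not attempt any such round-by-round potential maintenance for Waiter; they instead combine random-graph concentration (Lemmata~\ref{lem::denseM}, \ref{lem::largeSparseFamily}) with Theorem~\ref{th::smallFamily} to force Client's first $M$ edges to form a specific good configuration, and then extend greedily. That the authors resort to this indirect route, and still cannot close the $K_5$ case or the general case, is evidence that the direct potential approach you sketch faces real obstacles rather than merely technical ones.
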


This is a rather striking conjecture since, if true, it constitutes a notable example of a game whose value follows the predictions given by a random heuristic so precisely. Nonetheless, we strongly believe it to be true. 

In light of Theorem~\ref{th::BESH}(i), in order to verify Conjecture~\ref{conj}, we need only to prove the lower bound on $S(H,n,b)$, that is, we need to provide a strategy for Waiter which, for appropriate values of $b$, forces Client's graph to contain many copies of $H$. Probably the most natural strategy of this kind is one which constructs $H$ recursively. That is, we choose a suitable subgraph $H'$ of $H$ and in the first stage of the game, we play only on part of the whole board $\binom{[n]}{2}$, building a large number of copies of $H'$. Then we use the remaining pairs to extend the required number of copies of $H'$ to copies of $H$. This method works nicely when the structure of $H$ is simple; we use it in Section~\ref{sec::trees} to prove that Conjecture~\ref{conj} holds for trees.

\begin{theorem} \label{th::correctNumberOfTrees}
Let $k \geq 2$ and $n$ be positive integers and let $T$ be a tree on $k$ vertices. Then there exist positive constants $c$, $\alpha^-$ and $\alpha^+$ which depend on $k$, such that 
$$
\alpha^- n^k \,(b+1)^{1-k} \leq S(T, n, b) \leq \alpha^+ n^k \, (b+1)^{1-k}
$$ 
provided that $b \leq c\cdot n^{k/(k-1)}$. 
\end{theorem}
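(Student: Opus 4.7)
The upper bound of Theorem~\ref{th::correctNumberOfTrees} is immediate from Theorem~\ref{th::BESH}(i) with $v(T)=k$ and $e(T)=k-1$. For the lower bound I would proceed by induction on $k$. The base case $k=2$ is trivial: $T=K_2$, and offering any $b+1$ free edges per round forces Client to take at least $\lfloor\binom{n}{2}/(b+1)\rfloor = \Omega(n^2/(b+1))$ edges.

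For the inductive step, fix a leaf $v$ of $T$ with unique neighbour $u$, set $T' := T-v$ (a tree on $k-1$ vertices), and partition $V(K_n)=V_1\sqcup V_2$ into halves of size $\sim n/2$. In Stage~1, Waiter plays only on the edges of $K[V_1]$, following the inductive strategy for the Waiter-Client $T'$-game. The outcome is that Client's graph $G_1$ on $V_1$ contains $\Omega(n^{k-1}(b+1)^{2-k})$ labelled copies of $T'$; writing $f(w)$ for the number of such copies in which $w\in V_1$ plays the role of $u$, we have $\sum_{w\in V_1}f(w)=\Omega(n^{k-1}(b+1)^{2-k})$. In Stage~2, Waiter plays only on the bipartite board $V_1\times V_2$ via the following \emph{sweep}: enumerate $V_1=\{w_1,w_2,\ldots\}$ and, in each round, offer $b+1$ free bipartite edges all incident to the current $w_j$, passing to $w_{j+1}$ as soon as fewer than $b+1$ such edges remain at $w_j$. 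Because every offered batch lies at a single vertex, Client is forced to increase $d(w_j)$ (Client's bipartite degree at $w_j$) by exactly $1$ per round spent at $w_j$, giving $d(w_j)\geq \lfloor|V_2|/(b+1)\rfloor=\Omega(n/(b+1))$. Any bipartite Client-edge $\{w,w'\}$ with $w'\in V_2$ extends every $T'$-copy in $G_1$ with $u$-vertex $w$ to a distinct labelled copy of $T$, since the new vertex $w'$ lies in $V_2$ and is therefore automatically disjoint from the other $k-2$ vertices of the $T'$-copy, all of which lie in $V_1$. Hence the number of labelled copies of $T$ in Client's graph is at least
$$
\sum_{w\in V_1}f(w)\,d(w)\;\geq\;\Omega\!\left(\tfrac{n}{b+1}\right)\cdot\sum_{w\in V_1}f(w)\;=\;\Omega\!\left(\tfrac{n^k}{(b+1)^{k-1}}\right),
$$
which, after dividing by $|\mathrm{Aut}(T)|$, gives the required $\Omega(n^k(b+1)^{1-k})$ unlabelled copies.

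The step I expect to cause the most trouble is the regime $b\gtrsim n$, which is permitted by the hypothesis $b\leq cn^{k/(k-1)}$ once $k\geq 3$: the sweep literally breaks down because no vertex of $V_1$ then has $b+1$ free bipartite edges. Fortunately, in this regime the target $n^k(b+1)^{1-k}$ is small (polynomial in $n$ of sub-linear exponent, falling to $O(1)$ at the top of the $b$-range), and one can replace Stage~2 with an alternative scheme tailored to large $b$: for instance, offering $b+1$ edges spread across the $b+1$ highest-$f$ vertices in $V_1$ and using a weighted/threshold averaging argument on $f$ to lower-bound the resulting contribution to $\sum_{w} f(w)d(w)$. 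The only remaining chore is a routine verification that the inductive hypothesis on $T'$ applies in Stage~1, i.e., that $b\leq c_{k-1}(n/2)^{(k-1)/(k-2)}$ follows from $b\leq c_k n^{k/(k-1)}$ for $c_k$ chosen small enough and $n$ large enough; this is automatic since the exponent $(k-1)/(k-2)$ strictly exceeds $k/(k-1)$ for every $k\geq 3$.
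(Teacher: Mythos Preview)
Your argument for the regime $b \lesssim n$ is correct and essentially coincides with the paper's Lemma~2.3 (the paper iterates over copies rather than vertices, but the content is the same). The real gap is in the regime $n \lesssim b \leq c\,n^{k/(k-1)}$, which you rightly flag as the trouble spot but do not actually resolve. Your suggested fix, ``offering $b+1$ edges spread across the $b+1$ highest-$f$ vertices,'' is not well-defined once $b+1 > |V_1|$, and more importantly no threshold or averaging scheme on $f$ alone can work: your inductive hypothesis gives only $\sum_{w} f(w) = \Omega\bigl(n^{k-1}(b+1)^{2-k}\bigr)$ with no control on the \emph{distribution} of $f$. Client, playing adversarially in Stage~1, may concentrate essentially all the $T'$-copies at a handful of $u$-vertices; then the set $\{w:f(w)\ge f^\ast\}$ is too small for $\{w:f(w)\ge f^\ast\}\times V_2$ to contain even $b+1$ edges, and Stage~2 yields nothing. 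At the top of the range $b\sim n^{k/(k-1)}$ the target is $\Theta(1)$ copies, so there is no room to lose even a logarithmic factor.

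The paper's remedy is to run a \emph{separate} induction with a strengthened hypothesis in this regime: for $n \leq b \leq n^{k/(k-1)}/2^{k+6}$ one shows that Waiter can force $\Omega\bigl(n^{k}(b+1)^{1-k}\bigr)$ \emph{pairwise vertex-disjoint} copies of $T$. Vertex-disjointness forces $f(w)\in\{0,1\}$, so $A:=\{w:f(w)=1\}$ has size $\Omega\bigl(n^{k-1}(b+1)^{2-k}\bigr)$; Waiter then repeatedly offers $b+1$ free edges between the currently unmatched vertices of $A$ and the unused vertices of $V_2$, and a short double-counting argument shows Client must claim $\Omega\bigl(n^{k}(b+1)^{1-k}\bigr)$ such edges, each completing a distinct (vertex-disjoint) copy of $T$. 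The narrow intermediate band (roughly $n/2^{k+6}<b<n$) is handled by bias-monotonicity: $S(T,n,b)\ge S(T,n,n)=\Omega(n)=\Omega\bigl(n^{k}(b+1)^{1-k}\bigr)$. So to repair your proof you need to upgrade the inductive claim in the large-$b$ case to vertex-disjoint copies; a bare count of copies is not enough.
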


For more complicated graphs $H$, we develop another method, which is based on counting prohibited structures. Its heart is a rather simple but very useful observation (Theorem~\ref{th::smallFamily}) asserting that Waiter can prevent Client from claiming certain structures if they are very rare. As an immediate consequence of this fact we infer that Conjecture~\ref{conj} holds for every graph $H$ provided that $b$ is not too large, i.e. is bounded from above by the same function as in Theorem~\ref{th::BL}. 

\begin{theorem} \label{th::largeb}
For every graph $H$ with at least three non-isolated vertices there exist positive constants $c$, $\alpha^-$, and $\alpha^+$ such that 
$$
\alpha^- n^{v(H)} (b+1)^{-e(H)} \leq S(H,n,b) \leq \alpha^+  n^{v(H)} (b+1)^{-e(H)} \,,
$$
provided that $b \leq c\cdot n^{1/m_2(H)}$.
\end{theorem}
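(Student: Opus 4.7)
The upper bound is Theorem~\ref{th::BESH}(i), so it suffices to establish the lower bound. Set $p=1/(b+1)$; the hypothesis $b\le c\cdot n^{1/m_2(H)}$ places $p$ in the regime $p\ge c^{-1}n^{-1/m_2(H)}$, which is precisely where the second-moment method succeeds in showing that $G(n,p)$ contains $\Theta(n^{v(H)}p^{e(H)})$ copies of $H$ with high probability. My plan is to force Client's graph $C$ to match this random-graph benchmark by using Theorem~\ref{th::smallFamily} to eliminate the obstructions that would otherwise prevent this.

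Concretely, I would design Waiter's strategy so that, for each subgraph $H'\subseteq H$ with $v(H')\ge 3$, Client is blocked from claiming any member of a carefully chosen family $\mathcal{F}_{H'}$ of edge-subsets of $K_n$ encoding locally over-dense configurations of $H'$-copies (the configurations that would spoil the second-moment concentration in the $G(n,p)$ analogy). Theorem~\ref{th::smallFamily} is applied to the union of all such families, yielding a single Waiter strategy that simultaneously prevents Client from claiming any set in any $\mathcal{F}_{H'}$. Under $b\le c\cdot n^{1/m_2(H)}$, the sizes $|\mathcal{F}_{H'}|$ — essentially controlled by the number of $H'$-copies in $K_n$, i.e.\ $\Theta(n^{v(H')})$ — land in the rarity regime in which Theorem~\ref{th::smallFamily} is effective. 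The appearance of $m_2(H)$ (rather than $m(H)$) in the hypothesis is exactly the density parameter that matches both the second-moment threshold for $G(n,p)$ and the size constraint coming from Theorem~\ref{th::smallFamily}.

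Because $|E(C)|=\Theta(n^{2}/(b+1))$ holds automatically from the game rules, and because $C$ avoids every prohibited structure, I would conclude with a deterministic counting argument parallelling the second-moment lower bound in $G(n,p)$ to extract at least $\alpha^{-}n^{v(H)}(b+1)^{-e(H)}$ copies of $H$ in $C$. The main obstacle I expect is the precise formulation of the families $\mathcal{F}_{H'}$, which must be calibrated so that (i) their absence deterministically forces the required number of $H$-copies, (ii) their combined cardinality stays within the size regime allowed by Theorem~\ref{th::smallFamily}, and (iii) the passage ``no prohibited structure in $C$ $\Rightarrow$ many $H$-copies in $C$'' is a clean counting implication rather than a probabilistic one. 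Harmonising these three requirements under $b\le c\cdot n^{1/m_2(H)}$, with the exponent $1/m_2(H)$ dictated exactly by (ii), is the heart of the argument.
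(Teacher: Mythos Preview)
Your plan takes a detour that the paper avoids. The paper's proof is much more direct: it partitions $[n]$ into $v(H)$ classes of size $s=\lfloor n/v(H)\rfloor$, restricts play to the blow-up $\mathbb{B}_H(V_1,\dots,V_{v(H)};s)$, and then applies Theorem~\ref{th::smallFamily} with $\mathcal{H}$ taken to be the family of all $M$-edge subgraphs of the blow-up that contain at least $\mathbb{E}(X_H)/2$ canonical copies of $H$ (where $M\approx(1-\alpha)e(H)s^2/(b+1)$). The only work is to check $|\mathcal{H}|\ge(1-\alpha^M)\binom{N}{M}$, and this follows from a single Janson-type lower-tail bound (Lemma~\ref{lem::denseM}/\ref{lem::denseP}) together with Lemma~\ref{lem::propf}(\ref{hatdense}), which says $\hat f_H(s,p)=s^2p$ when $p\ge s^{-1/m_2(H)}$. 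No explicit ``prohibited structures'' are ever written down; the property ``has $\ge\mathbb{E}(X_H)/2$ copies'' is treated as a black box.

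Your proposed route has a genuine gap at step (iii). You want to list, for each $H'\subseteq H$, a family $\mathcal{F}_{H'}$ of over-dense local configurations, feed their union into Theorem~\ref{th::smallFamily}, and then argue deterministically that a graph of the right edge count avoiding every $\mathcal{F}_{H'}$ must contain $\Theta(n^{v(H)}p^{e(H)})$ copies of $H$. But Theorem~\ref{th::smallFamily} is stated for an $M$-uniform family and guarantees only that Client's entire $M$-set lies in $\mathcal{H}$; it does not give a mechanism for blocking a heterogeneous collection of small forbidden sets. More importantly, the implication ``no over-dense $H'$-configuration $\Rightarrow$ many $H$-copies'' is not a counting triviality: such ``upper-regular $\Rightarrow$ lower bound on subgraph counts'' statements are genuinely hard (this is essentially the content of the K\L R/sparse-counting machinery). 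The paper sidesteps all of this by putting the second-moment calculation \emph{inside} the definition of $\mathcal{H}$, so that Theorem~\ref{th::smallFamily} delivers the copy count directly and no deterministic extraction is needed.
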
 

In fact, for graphs $H$ that satisfy certain technical conditions which, roughly speaking, assert that $H$ is `well-balanced', we can prove much more. Namely, we can show that not only can Waiter force Client to build many copies of $H$, but he can do it early in the game. Moreover, Waiter can offer pairs such that the following holds: for every pair of vertices $\{v,w\}$, if $v$ and $w$ belong to a copy of $H$ in Client's graph but $\{v,w\}$ is not an edge of $H$, then the pair $\{v,w\}$ belongs to exactly one copy of $H$ and this pair has not yet been offered by Waiter. For such a graph $H$, our strategy for verifying Conjecture~\ref{conj} is as follows. For a given $b = b(n)$, we delete some edges from $H$, thus obtaining a balanced spanning subgraph $H'$ which is sparse enough to ensure $b \leq c n^{1/m_2(H')}$. Applying Theorem~\ref{th::largeb} then forces at least $c'' n^{v(H)} (b+1)^{-e(H')}$ copies of $H'$ in Client's graph. Now, Waiter can offer Client free edges which partly extend his copies of $H'$ to copies of $H$. Clearly, for every edge $e \in E(H) \setminus E(H')$, forcing Client to add $e$ to his copies of $H'$, decreases the number of copies of $H'$ in Client's graph which could potentially be extended further, by a factor of $b+1$. Since in order to complete a copy of $H$ we need to add to $H'$ exactly $e(H) - e(H')$ edges, at the very end we are left with 
$$
\Theta \left(n^{v(H')} (b+1)^{-e(H')} \right) \Theta \left((b+1)^{-(e(H)-e(H'))} \right) = \Theta \left(n^{v(H)} (b+1)^{-e(H)} \right) 
$$
copies of $H$ in Client's graph, as required. This method seems to be quite effective but has one serious drawback -- it only works if we can find a spanning subgraph $H'$ of $H$ which is at the same time sparse and well-balanced. Unfortunately, this is not always possible, and even when it is, the analysis of the structure of $H'$ is often technical and long. Though our method works for a large family of graphs, for the sake of clarity and simplicity, in this paper we consider only complete graphs which suffice to illustrate the technical problems one should overcome in order to apply the method. Note that the maximum density of the complete graph on $k$ vertices is equal to $m(K_k) = (k-1)/2$. 

\begin{theorem} \label{th::manyK}
Let $k \geq 3$ be an integer. Then there exist positive constants $c$, $\alpha^-$ and $\alpha^+$ which depend on $k$, such that 
\begin{description}
\item [(i)] If $k \neq 5$ and $1 \leq b \leq c \cdot n^{2/(k-1)}$, then  
$$
\alpha^- n^k \, (b+1)^{- \binom{k}{2}} \leq S(K_k, n, b) \leq \alpha^+ n^k \, (b+1)^{- \binom{k}{2}} \,.
$$
\item [(ii)] If $k=5$ and $1 \leq b \leq c \cdot n^{2/(k-1)}$, then  
$ S(K_k, n, b) > 0$. 
\end{description}
\end{theorem}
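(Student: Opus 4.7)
The upper bound in part~(i) follows directly from Theorem~\ref{th::BESH}(i) applied with $H=K_k$, so I would concentrate on the lower bound in (i) and on part~(ii). Both parts are to be proved by the two-stage scheme sketched just before the theorem statement: pick a suitable spanning subgraph $H_k\subseteq K_k$, invoke Theorem~\ref{th::largeb} to force many copies of $H_k$ in Client's graph, and then run a short extension phase in which Waiter forces Client to complete each such copy to a $K_k$ by adding the missing edges one at a time.

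To make Theorem~\ref{th::largeb} applicable under the hypothesis $b\le c\cdot n^{2/(k-1)}=c\cdot n^{1/m(K_k)}$, the plan is to choose $H_k$ strictly balanced with $m_2(H_k)=(k-1)/2$; this forces $e(H_k)=\binom{k-1}{2}+1$ together with the inequalities $(e(H'')-1)/(v(H'')-2)\le (k-1)/2$ for every $H''\subseteq H_k$ with $v(H'')\ge 3$. For $k=3$ and $k=4$ I would take $H_3=P_3$ and $H_4=C_4$; for $k\ge 6$ I would construct $H_k$ by deleting a suitable small subgraph from $K_k$ (for $k=6$, for example, a triangle together with a disjoint edge suffices), checking the balancedness condition case by case on $v(H'')$. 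I would also verify, for each such $H_k$, the ``uniqueness'' property flagged in the paragraph preceding the theorem, namely that Waiter can arrange his offers so that every pair $\{v,w\}\notin E(H_k)$ lying inside a copy of $H_k$ in Client's graph belongs to exactly one such copy. Theorem~\ref{th::largeb} then provides at least $\beta^-\,n^k(b+1)^{-e(H_k)}$ copies of $H_k$ in Client's graph, and the extension phase of $\binom{k}{2}-e(H_k)$ further rounds retains a $1/(b+1)$ fraction of candidate copies per round (each round pairs a still-missing edge with $b$ free edges irrelevant to the count). This yields
\[
\Omega\bigl(n^k(b+1)^{-e(H_k)}\bigr)\cdot(b+1)^{-\bigl(\binom{k}{2}-e(H_k)\bigr)}=\Omega\bigl(n^k(b+1)^{-\binom{k}{2}}\bigr)
\]
copies of $K_k$, as required for part~(i).

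The main obstacle is part~(ii). The density condition for $k=5$ forces $e(H_5)=7$, so $H_5$ is essentially $T(5,3)$ with one edge removed; the resulting graph is so symmetric that the non-edge pairs of distinct copies of $H_5$ overlap substantially, and the clean per-round loss factor $1/(b+1)$ of the extension argument breaks down. To still prove $S(K_5,n,b)>0$, I would apply Theorem~\ref{th::largeb} to force a large collection of copies of $H_5$, use a counting/pigeonhole argument to identify at least one such copy all three of whose non-edge pairs remain free at the end of phase one, and then complete this single copy to a $K_5$ in three additional Waiter-Client rounds offering each non-edge pair together with $b$ still-free dummy edges. The most delicate points of the whole argument are the case-by-case verification of the balancedness and uniqueness properties of $H_k$ for $k\notin\{3,4,5\}$, and the bookkeeping in the extension phase that guarantees the per-round $1/(b+1)$ loss factor is actually achieved.
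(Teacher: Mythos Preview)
Your high-level plan matches the paper's, but several load-bearing steps are treated as routine when in fact they are the technical heart of the argument.

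\textbf{The uniqueness property is not a verification; it is the whole difficulty.} Theorem~\ref{th::largeb} only gives you many copies of $H_k$; it says nothing about whether distinct copies share non-edge pairs, nor about whether those pairs are still free. You write that you ``would verify'' this, but there is no way to extract it from Theorem~\ref{th::largeb}. The paper obtains it via the sparse $H$-family machinery (Theorem~\ref{th::smallFamily}, Lemmata~\ref{lem::largeSparseFamily}--\ref{lem::manyHgnm2}, Corollaries~\ref{cor::manyHnew} and~\ref{cor::manyHnew2}), which force Client's copies of $H_i$ to intersect only in cliques. Crucially, Waiter plays on the blow-up ${\mathbb B}_{K_k}$, so the pairs corresponding to the removed edges $e_1,\dots,e_i$ are not in the board at all during Stage~I and hence are automatically free for Stage~II. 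You have no substitute for this.

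\textbf{A single subgraph does not suffice.} Your $H_k$ is exactly the paper's $H_{k-2}$ (remove $k-2$ edges). But the sparse-family result (Corollary~\ref{cor::manyHnew}) applies only for $b+1\in[c_1 s^{g_1(H_i)}, c_2 s^{g_2(H_i)}]$, a narrow window for each fixed $H_i$. That is why the paper uses the entire chain $H_1,\dots,H_{k-2}$ (two disjoint matchings removed one edge at a time), together with Lemma~\ref{lem::balan} computing $g_1(H_i)$ and $g_2(H_i)$, so that these windows tile $[2,\,c\,n^{2/(k-1)}]$. For small $b$ (below the first window) the paper invokes Corollary~\ref{cor::denseH} to force $K_k$ directly, with no extension phase.

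\textbf{The extension mechanism is not ``one missing edge plus $b$ dummies.''} In a Waiter--Client game Client keeps the element he picks, so if you offer one relevant edge and $b$ dummies, Client takes a dummy and the relevant edge goes to Waiter. The correct Stage~II (which the paper carries out) is: in phase $j$, for $\lfloor |\mathcal{A}^{j-1}|/(b+1)\rfloor$ rounds, offer $b{+}1$ edges in the $e_j$-position, each coming from a \emph{different} copy in the current sparse family; Client is forced to add $e_j$ to one copy per round. This is also why your proposed argument for (ii) --- isolate one copy of $H_5$ and complete it in three rounds with dummies --- cannot work. The paper instead takes $b=\Theta(n^{1/2})$, applies Corollary~\ref{cor::manyHnew2} to $H_3$ (which is $m_2$-balanced), and runs the same multi-phase extension to obtain $\Omega(1)$ copies of $K_5$.
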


For $k=5$ we can in fact prove a stronger result than the one stated in Theorem~\ref{th::manyK} (ii). However, our general method, which which works for $k \neq 5$ (as well as many other graphs) does not work for $k=5$ and some range of the bias $b$. For simplicity, we decided to state here just the fundamental fact (ii), which, combined with (i), implies that $S(K_k, n, b)$ drops from a positive value to zero around $b = \Theta(n^{2/(k-1)})$ for every $k \geq 3$. Our stronger result for $k = 5$ will be discussed in greater detail in Section~\ref{sec::cliques}.

Finally, let us mention that typically, for any given graph $H$ and any function $b = b(n)$, one can combine the techniques described above, in order to verify Conjecture~\ref{conj} for these $H$ and $b$. For instance, consider the graph $F_9$ on nine vertices which consists of two vertex disjoint copies of $K_4$ joined by a path of length two. Suppose that $b = \Theta \left(n^{1/m(F_9)}\right) = \Theta \left(n^{9/14}\right)$. One can check that the approach based on finding a suitable `well-balanced' subgraph $F' \subseteq F_9$ whose copies can be extended to copies of $F_9$ fails here, because no subgraph $F'$ of $F_9$ fulfills all the technical requirements which are needed to employ this method. Nevertheless, Conjecture~\ref{conj} can still be verified in this case by forcing many `uniformly spread out' copies of $K_4$ on, say, half the vertices and subsequently using the remaining free pairs to join them by 2-paths. Still, it is fairly hard to find and describe a general approach which applies to any graph $H$. Even for relatively small graphs $H$, proving that Conjecture~\ref{conj} holds for these $H$ can be long and technical.   

\subsection{Notation and terminology} \label{sec::prelim}

\noindent Most of our results are asymptotic in nature and whenever necessary we assume that the number of vertices (usually denoted in this paper by $n$ or $s$) is sufficiently large. Our graph-theoretic notation is standard and follows that of~\cite{West}. In particular, we use the following.

For a graph $G$, let $V(G)$ and $E(G)$ denote its sets of vertices and edges respectively, and let $v(G) = |V(G)|$ and $e(G) = |E(G)|$. For disjoint sets $A,B \subseteq V(G)$, let $E_G(A,B)$ denote the set of edges of $G$ with one endpoint in $A$ and one endpoint in $B$ and let $e_G(A,B) = |E_G(A,B)|$. For a vertex $u \in V(G)$ and a set $B \subseteq V(G)$ we abbreviate $E_G(\{u\}, B)$ under $E_G(u, B)$. For a set $S \subseteq V(G)$, let $G[S]$ denote the subgraph of $G$ which is induced on the set $S$. For a vertex $u \in V(G)$ and a set $B \subseteq V(G)$, let $N_G(u, B) = \{v \in B : uv \in E(G)\}$ denote the set of neighbors of $u$ in $B$ and let $d_G(u, B) = |N_G(u, B)|$ denote its \emph{degree} in $B$. We abbreviate $N_G(u, V(G))$ and $d_G(u, V(G))$ under $N_G(u)$ and $d_G(u)$, respectively. Often, when there is no risk of confusion, we omit the subscript $G$ from the notation above. Given two graphs $G$ and $H$ on the same set of vertices $V$, let $G \setminus H$ denote the graph with vertex set $V$ and edge set $E(G) \setminus E(H)$. If $H$ has $n$ vertices, the graph $K_n \setminus H$ is the \emph{complement} of $H$, denoted by $H^c$. 

Assume that some Waiter-Client game, played on the edge set of $K_n$, is in progress. At any given moment during this game, let $G_C$ denote the graph spanned by Client's edges, let $G_W$ denote the graph spanned by Waiter's edges, and let $G_F$ denote the graph spanned by those edges of $K_n$ which are neither in $G_C$ nor in $G_W$. The edges of $G_F$ are called \emph{free}. 

The rest of this paper is organized as follows: in Section~\ref{sec::trees} we prove Theorem~\ref{th::BESH} and Theorem~\ref{th::correctNumberOfTrees}. In Section~\ref{sec::smallFamily} we describe a general efficient strategy for Waiter to avoid rare structures. In Section~\ref{sec::probTools} we prove several properties of a certain model of random graphs. Results obtained in Sections~\ref{sec::smallFamily} and~\ref{sec::probTools} are then used in Section~\ref{sec::eveyH} to estimate the value of Waiter-Client $H$-games; in particular, we prove Theorem~\ref{th::largeb}. We consider the special case in which $H$ is a clique in Section~\ref{sec::cliques}.


\section{Tree games} \label{sec::trees}

As noted in the introduction, our proof of Theorem~\ref{th::correctNumberOfTrees} is based on a simple inductive argument and does not require any of the heavy machinery we develop in this paper. It is thus included in this section, before we describe the tools we need for the much more challenging case where $H$ is a clique. One tool we do already need is Theorem~\ref{th::BESH}. It will be deduced from the following sufficient condition for Client's win in biased Waiter-Client games. 

\begin{proposition} \label{prop::BES}
Let $X$ be a finite set, let ${\mathcal F}$ be a family of subsets of $X$ and let $b$ be a positive integer. Playing the $(b:1)$ Waiter-Client game $(X, \mathcal{F})$, Client has a strategy to ensure that, at the end of the game, the board elements he claimed will span at most $\sum_{A \in \mathcal{F}} (b+1)^{-|A|}$ winning sets.
\end{proposition}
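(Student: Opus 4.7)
The plan is to adapt the Erdős--Selfridge potential method to the Waiter--Client setting. I introduce a dynamic weight function on the game state and give Client a strategy that keeps this weight non-increasing; evaluating it at the beginning and at the end of the game will yield the desired bound.

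Concretely, at any moment during the game let $W$, $C$, $F$ denote the sets claimed so far by Waiter, by Client, and the free elements, respectively, and define
$$
T \;=\; \sum_{\substack{A\in\mathcal F\\ A\cap W=\emptyset}} (b+1)^{-|A\cap F|}.
$$
Initially $W=\emptyset$ and $F=X$, so $T=\sum_{A\in\mathcal F}(b+1)^{-|A|}$. At the end of the game $F=\emptyset$, so each term is either $1$ (when $A\subseteq C$) or $0$ (when $A\cap W\neq\emptyset$), and hence $T$ equals the number of winning sets Client has claimed entirely. So it suffices to provide Client with a strategy that keeps $T$ non-increasing throughout.

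Consider a round in which Waiter offers the $b+1$ free elements $e_0,\ldots,e_b$ and let $T'_i$ be the value of the potential immediately after Client picks $e_i$. I would then show, by summing over $i=0,\ldots,b$, that
$$
\sum_{i=0}^{b} T'_i \;\le\; (b+1)\,T,
$$
so that the minimum is at most $T$ and Client simply picks the minimizing element. The inequality is verified by inspecting the contribution of each set $A$ with $A\cap W=\emptyset$ according to $|A\cap\{e_0,\ldots,e_b\}|$: if this intersection is empty, the contribution of $A$ is unchanged and is counted $b+1$ times in the sum; if it contains exactly one $e_k$, then the only $i$ producing a nonzero term is $i=k$, contributing $(b+1)\cdot(b+1)^{-|A\cap F|}$; and if $|A\cap\{e_0,\ldots,e_b\}|\ge 2$ then every $T'_i$ gets $0$ from $A$. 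In each case the contribution to $\sum_i T'_i$ is at most $b+1$ times the contribution to $T$, establishing the inequality.

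The only remaining subtlety is the last possibly-incomplete round, in which at most $b$ leftover free elements are given to Waiter for free; but this step can only turn some terms of $T$ into $0$, so $T$ is still non-increasing. Iterating over all rounds gives $T_{\text{final}}\le T_{\text{initial}}=\sum_{A\in\mathcal F}(b+1)^{-|A|}$, which is exactly the bound claimed. There is no serious obstacle here: the only thing to be careful about is the case analysis in the averaging step, and in particular the fact that when two of the offered elements lie in the same $A$ the set is automatically killed, so the inequality is (possibly strict) in the right direction.
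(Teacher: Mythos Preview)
Your proof is correct and is precisely the standard potential-method argument that the paper alludes to: the authors omit the proof entirely, writing only that it ``is a straightforward application of the potential method, whose details can be found in~\cite{beckwf} (or~\cite{TTT}).'' Your weight function and averaging step are exactly that application, so there is nothing to add.
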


The proof of this proposition is a straightforward application of the potential method, whose details can be found in~\cite{beckwf} (or \cite{TTT}), and is therefore omitted. 

\begin{proof}{\textbf{of Theorem~\ref{th::BESH}}}
Part (i) is an immediate corollary of Proposition~\ref{prop::BES} with $X = E(K_n)$ and $\mathcal{F} = \mathcal{F}_H$.

For Part (ii), let $H'$ be a subgraph of $H$ such that $m(H) = e(H')/v(H')$. Then there exists a positive constant $c$ (depending on $H$) such that if $b > c n^{1/m(H)}$, then 
$$
\sum_{A \in \mathcal{F}_{H'}} (b+1)^{-|A|} \leq n^{v(H')} (b+1)^{-e(H')} < 1 \,.
$$
Applying Proposition~\ref{prop::BES} with $X = E(K_n)$ and $\mathcal{F} = \mathcal{F}_{H'}$ we conclude that, playing a $(b:1)$ Waiter-Client game on $E(K_n)$, Client has a strategy to avoid claiming a copy of $H'$, thereby avoiding claiming a copy of $H$ as well.
\end{proof}

Our next aim is to prove Theorem~\ref{th::correctNumberOfTrees}; it will readily follow from the following two lemmata.

\begin{lemma} \label{lem::manyT1}
Let $T$ be a tree on $k \geq 1$ vertices. If $n$ is sufficiently large and $b \leq n/2^{k+6}$, then $S(T,n,b) \geq 4^{-\binom{k+1}{2}} n^k (b+1)^{1-k}$.
\end{lemma}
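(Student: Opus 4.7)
The plan is to prove the lemma by induction on $k$. The base case $k=1$ is trivial: $T$ has no edges, so every vertex of $[n]$ is (vacuously) a copy of $T$ in Client's graph, and hence $S(T,n,b)=n\ge n/4=4^{-\binom{2}{2}}n^1$.

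For the inductive step, assume $k\ge 2$ and that the lemma holds for trees on $k-1$ vertices. Fix a leaf $v$ of $T$, let $u$ be its unique neighbor, and set $T':=T-v$. Partition $[n]=V_1\cup V_2$ into two parts of size $n/2$ (assume $n$ is even; otherwise round). Waiter plays in two phases. In Phase~1 he restricts play to $\binom{V_1}{2}$ and follows an optimal $(b:1)$ Waiter strategy for the $T'$-game on $V_1$. Since $b\le n/2^{k+6}=(n/2)/2^{(k-1)+6}$, the inductive hypothesis applies and forces at least
\[
N_0:=4^{-\binom{k}{2}}(n/2)^{k-1}(b+1)^{2-k}
\]
copies of $T'$ in Client's graph inside $V_1$. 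For each such copy I arbitrarily distinguish one vertex playing the role of $u$, and call it the \emph{hook} of the copy. In Phase~2 all edges of $E(V_1,V_2)$ are still free; Waiter then processes the vertices $w\in V_1$ one at a time, each time offering $b+1$ still-free edges from $\{wy:y\in V_2\}$ as long as that many remain. This guarantees that every $w\in V_1$ ends up owning at least $\lfloor |V_2|/(b+1)\rfloor\ge n/(4(b+1))$ edges to $V_2$ in Client's graph; the last inequality uses $b+1\le n/4$, which is immediate from the hypothesis.

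To count copies of $T$ in Client's graph I consider all triples (copy of $T'$ from Phase~1, its hook $u$, Client-edge $uw$ with $w\in V_2$). Each such triple yields a copy of $T$, since $w\in V_2$ is automatically disjoint from the Phase~1 copy sitting in $V_1$; and distinct triples give distinct edge sets, because the hook has been fixed once and for all for each Phase~1 copy. Hence Client's graph contains at least
\[
N_0\cdot\frac{n}{4(b+1)}=4^{-\binom{k}{2}}\cdot 2^{-k-1}\cdot n^k(b+1)^{1-k}
\]
copies of $T$. Using $\binom{k+1}{2}=\binom{k}{2}+k$ together with $2^{-k-1}\ge 4^{-k}$ (valid for every $k\ge 1$), this is at least $4^{-\binom{k+1}{2}}n^k(b+1)^{1-k}$, closing the induction.

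The one subtle point is the avoidance of double-counting copies of $T$ via the (copy, extension edge) parametrization, which is why I commit to exactly one hook per Phase~1 copy of $T'$; beyond this, all estimates are elementary, and the numerical hypothesis $b\le n/2^{k+6}$ was chosen precisely so that the same inequality (with $n$ replaced by $n/2$) is available in the inductive call for $T'$.
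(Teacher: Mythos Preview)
Your proof is correct and follows essentially the same approach as the paper: induction on $k$, split $[n]$ into two halves, force many copies of $T'=T-v$ in the first half via the inductive hypothesis, then extend each copy by offering crossing edges from the hook to the second half. The only cosmetic difference is that in Phase~2 you process \emph{all} vertices of $V_1$ rather than only the hooks (which is harmless, just slightly wasteful), and you are a bit more explicit than the paper about why distinct (copy, extension edge) pairs yield distinct copies of $T$.
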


\begin{proof}
For every $k \geq 1$, let $t_k(n,b) = 4^{-\binom{k+1}2} n^k (b+1)^{1-k}$. We will prove the lemma by induction on $k$. For $k=1$ the assertion of the lemma is trivially true. Fix some $k \geq 1$ and assume that the assertion of the lemma holds for every tree on $k$ vertices. Let $T$ be an arbitrary tree on $k+1$ vertices; we will prove that if $b \leq n/2^{k+7}$, then Waiter can force Client to build at least $t_{k+1}(n,b)$ copies of $T$. Let $v_{k+1}$ be a leaf of $T$ and let $v_k$ be its unique neighbor. Let $T_k = T \setminus \{v_{k+1}\}$. We partition the vertex set $V(K_n)$ into two subsets $V_1$ and $V_2$ such that $|V_1| = \lceil n/2 \rceil$ and $|V_2| = \lfloor n/2 \rfloor$. Waiter's strategy is divided into two stages.

In the first stage, offering only edges of $K_n[V_1]$, Waiter forces Client to build a family ${\mathcal T}$ consisting of at least $t_k(|V_1|,b)$ copies of $T_k$. This is clearly possible by the induction hypothesis since $b \leq n/2^{k+7} \leq |V_1|/2^{k+6}$. 

In the second stage Waiter forces Client to extend every copy $T^i$ of $T_k$ he has built during the first stage, into many copies of $T$. For every such $T^i$, let $u^i$ denote the vertex which corresponds to $v_k$. Waiter offers $b+1$ edges of $E(u^i, V_2)$ for $\lfloor |V_2|/(b+1) \rfloor$ consecutive rounds. Client is thus forced to build at least $t_k(|V_1|,b) \lfloor |V_2|/(b+1) \rfloor$ copies of $T$. Since 
$$
\left \lfloor \frac{|V_2|}{b+1} \right \rfloor \geq \frac{n/2 - 1}{b+1} - 1 > \frac{n}{4(b+1)}
$$ 
holds for $b < n/4 - 2$, the number of copies of $T$ in Client's graph at the end of the game is at least
$$
t_k(\lceil n/2 \rceil, b) \cdot \frac{n}{4(b+1)} \geq \frac{n^k}{2^k \cdot 4^{\binom{k+1}{2}}(b+1)^{k-1}} \cdot \frac{n}{4(b+1)} \geq \frac{n^{k+1}}{4^{\binom{k+2}{2}}(b+1)^k} = t_{k+1}(n,b) \,,
$$
as claimed.
\end{proof}

\begin{lemma} \label{lem::manyT2}
Let $T$ be a tree on $k \geq 1$ vertices. If $n$ is sufficiently large and $n \leq b \leq n^{k/(k-1)}/2^{k+6}$, then playing a $(b:1)$ Waiter-Client game on $E(K_n)$, Waiter has a strategy to force Client to build at least $4^{-\binom{k+1}{2}} n^k (b+1)^{1-k}$ vertex disjoint copies of $T$. (For $k=1$ we can replace $n^{k/(k-1)}$ with $\infty$.)
\end{lemma}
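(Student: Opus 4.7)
My plan is to prove the lemma by induction on $k$. The base case $k=1$ is trivial, since Waiter can simply designate any $\lfloor n/4 \rfloor$ vertices as singleton copies of $T = K_1$ without playing.

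For the inductive step with $k \geq 2$, I would fix a leaf $v_k$ of $T$ with unique neighbor $v_{k-1}$, set $T' = T - v_k$, and partition $V(K_n) = V_1 \cup V_2$ with $|V_1| = \lceil n/2 \rceil$ and $|V_2| = \lfloor n/2 \rfloor$. The strategy has two stages. In Stage~1, Waiter plays only on edges of $K_n[V_1]$, producing a family $\{{T'}^{(1)}, \ldots, {T'}^{(M)}\}$ of vertex disjoint copies of $T'$: for $k=2$ this just means picking $M := \lfloor |V_1|/4 \rfloor$ arbitrary vertices of $V_1$; for $k \geq 3$ it is an application of the inductive hypothesis to the $(b{:}1)$ $T'$-game on $K_n[V_1]$, which yields $M := \lfloor 4^{-\binom{k}{2}} |V_1|^{k-1}/(b+1)^{k-2} \rfloor$ vertex disjoint copies. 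Verifying the parameter range $|V_1| \leq b \leq |V_1|^{(k-1)/(k-2)}/2^{k+5}$ required for the inductive invocation is routine, using $b \geq n \geq |V_1|$, $b \leq n^{k/(k-1)}/2^{k+6}$, and the elementary inequality $k/(k-1) < (k-1)/(k-2)$.

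In Stage~2, Waiter extends these copies to copies of $T$, one per round, over $N := \lfloor 4^{-\binom{k+1}{2}} n^k/(b+1)^{k-1} \rfloor$ rounds. For each $i$, let $u_i$ denote the vertex of ${T'}^{(i)}$ playing the role of $v_{k-1}$. Before round $r$, let $E_r \subseteq [M]$ be the indices of already-extended copies and $W_r \subseteq V_2$ the vertices of $V_2$ used in those extensions. Waiter offers $b+1$ free edges of the form $u_i w$ with $i \notin E_r$ and $w \in V_2 \setminus W_r$; whichever edge $u_{i^*} w^*$ Client claims extends ${T'}^{(i^*)}$ to a copy of $T$ that is automatically vertex disjoint from all previously built copies, since both $u_{i^*}$ and $w^*$ are fresh and the ${T'}^{(j)}$ are already pairwise vertex disjoint. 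After $N$ such rounds, Client's graph contains $N$ vertex disjoint copies of $T$, as required.

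The main obstacle is to check that $b+1$ such free edges indeed exist at every round $r \in [1,N]$. The number of candidate pairs $(u_i, w)$ with $i \notin E_r$ and $w \in V_2 \setminus W_r$ equals $(M-r+1)(|V_2|-r+1) \geq M|V_2| - (N-1)(M+|V_2|)$, and at most $(N-1)b$ of them have already been claimed by Waiter in earlier Stage~2 rounds. Using $M + |V_2| \leq n \leq b$, the number of free edges is at least $M|V_2| - 2Nb$. The arithmetic identity $4^{-\binom{k}{2}}/2^k = 2^k \cdot 4^{-\binom{k+1}{2}}$ gives $M|V_2| \geq (2^k - o(1)) N(b+1)$, which implies $M|V_2| - 2Nb \geq b+1$ for all $k \geq 3$. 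The case $k=2$ is slightly tighter and requires the direct estimates $M|V_2| \geq n^2/16 - O(n)$ and $2Nb \leq n^2/32$, which still yield $M|V_2| - 2Nb \geq c n^2 \geq b+1$ for $n$ large. This feasibility check is essentially the only piece of real work in the proof.
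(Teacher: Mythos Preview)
Your proposal is correct and follows essentially the same approach as the paper: induction on $k$, split $V(K_n)=V_1\cup V_2$, build vertex-disjoint copies of $T'=T-v_k$ inside $V_1$ by induction, and then extend them one at a time using free $E(V_1,V_2)$ edges between unused anchor vertices and unused vertices of $V_2$. The only difference is cosmetic: the paper phrases Stage~2 as ``play until $e_{G_F}(A,B)<b+1$'' and argues by contradiction that at least $t_{k+1}(n,b)$ extensions occur, whereas you fix the number of rounds $N$ in advance and verify directly that $M|V_2|-2Nb\geq b+1$, which is the same inequality unpacked the other way.
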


\begin{proof}
We will prove the lemma by induction on $k$. For $k=1$ the assertion of the lemma is trivially true. Fix some $k \geq 1$ and assume that the assertion of the lemma holds for every tree on $k$ vertices. Let $T$ be an arbitrary tree on $k+1$ vertices and assume that $n \leq b \leq n^{(k+1)/k}/2^{k+7}$. Let $t_k(n,b)$, $v_{k+1}$, $v_k$, $T_k$, $V_1$ and $V_2$ be as in the previous proof. We present a strategy for Waiter to force Client to build at least $t_{k+1}(n,b)$ pairwise vertex disjoint copies of $T$. Waiter's strategy is divided into two stages. 

In the first stage, offering only edges of $K_n[V_1]$, Waiter forces Client to build a family ${\mathcal T}$ consisting of at least $t_k(|V_1|,b)$ pairwise vertex disjoint copies of $T_k$. This is clearly possible by the induction hypothesis since $|V_1| \leq n \leq b \leq n^{(k+1)/k}/2^{k+7} \leq |V_1|^{k/(k-1)}/2^{k+6}$ holds for sufficiently large $n$.

In the second stage Waiter forces Client to extend some of the copies of $T_k$ he has built during the first stage, into a copy of $T$ as follows. Immediately before each round of the second stage, Waiter defines $A$ to be the set of all vertices $u$ which correspond to $v_k$ in a copy of $T_k$ in ${\mathcal T}$, such that $d_{G_C}(u, V_2) = 0$. Moreover, he defines $B$ to be the set of all vertices $v \in V_2$ such that $d_{G_C}(v) = 0$. If $e_{G_F}(A,B) \geq b+1$, then Waiter offers Client $b+1$ arbitrary free edges of $E(A,B)$. The second stage is over as soon as $e_{G_F}(A,B) < b+1$ or $|V_2 \setminus B| \geq t_{k+1}(n,b)$ first holds. 

Note that, at the end of the second stage, every edge $uv \in E(G_C)$ such that $u$ corresponds to $v_k$ in some copy $T' \in {\mathcal T}$ of $T_k$ and $v \in V_2 \setminus B$, extends $T'$ into a copy of $T$ in $G_C$. Moreover, the resulting copies of $T$ are pairwise vertex disjoint. Therefore, in order to complete the proof, it suffices to verify that $|V_2 \setminus B| \geq t_{k+1}(n,b)$ holds at the end of the second stage. 

Suppose for a contradiction that $|V_2 \setminus B| < t_{k+1}(n,b)$ holds at the end of the second stage. It is not hard to see that if a vertex $u$ corresponds to $v_k$ in some copy of $T_k$ in ${\mathcal T}$, then either $u \in A$ or $d_{G_C}(u, V_2) = 1$. Similarly, for every $v \in V_2$, either $v \in B$ or $d_{G_C}(v) = 1$. Therefore, the total number of rounds played in the second stage is $|V_2 \setminus B| = |{\mathcal T}| - |A|$; denote this number by $r$. Since $|{\mathcal T}| \geq t_k(|V_1|,b)$ holds by the induction hypothesis, we obtain 
\begin{eqnarray} \label{eq::lowerBoundA}
|A| &=& |{\mathcal T}| - r > |{\mathcal T}| - t_{k+1}(n,b) \geq t_k(n/2,b) - t_{k+1}(n,b) \nonumber \\
&=& \frac{n^k}{4^{\binom{k+1}{2}}(b+1)^{k-1}} \left(\frac{1}{2^k} - \frac{n}{4^{k+1}(b+1)}\right) \geq \frac{n^k}{4^{\binom{k+1}{2}}(b+1)^{k-1}} \cdot \frac{1}{2^{k+1}} \,,
\end{eqnarray}
where the last inequality holds since $b \geq n$.

Similarly
\begin{eqnarray} \label{eq::lowerBoundB}
|B| &=& |V_2| - |V_2 \setminus B| > |V_2| - t_{k+1}(n,b) > n/2 - 1 - t_{k+1}(n,b) \nonumber \\ &=& n/2 - 1 - \frac{n^{k+1}}{4^{\binom{k+2}{2}}(b+1)^k} > 3n/8 \,,
\end{eqnarray}
where the last inequality holds since $b \geq n$.

It follows by our assumption that $|V_2 \setminus B| < t_{k+1}(n,b)$ and by the description of the second stage of Waiter's strategy, $|A||B| - r (b+1) \leq e_{G_F}(A,B) < b+1$ holds at the end of this stage. Hence, using~\eqref{eq::lowerBoundA} and~\eqref{eq::lowerBoundB} we obtain 
\begin{equation} \label{eq::lowerBoundr}
r+1> \frac{|A||B|}{b+1} > \frac{n^k}{4^{\binom{k+1}{2}}(b+1)^{k-1}} \cdot \frac{1}{2^{k+1}} \cdot \frac{3n}{8} \cdot \frac{1}{b+1} \geq \frac{1.5 n^{k+1}}{4^{\binom{k+2}{2}}(b+1)^{k}} = 1.5 t_{k+1}(n,b)\,.
\end{equation}

Since $t_{k+1}(n,b) > 2$ for every $b \leq n^{(k+1)/k}/2^{k+7}$, it follows by~\eqref{eq::lowerBoundr} that $r > t_{k+1}(n,b)$. Since $r = |V_2 \setminus B|$, this contradicts our assumption that $|V_2 \setminus B| < t_{k+1}(n,b)$.    
\end{proof}

\begin{proof}{\textbf{of Theorem~\ref{th::correctNumberOfTrees}}}
The required upper bound on $S(T, n, b)$ follows immediately from Theorem~\ref{th::BESH}(i); it thus remains to prove the lower bound.

For $b \leq n/2^{k+6}$ the desired lower bound follows from Lemma~\ref{lem::manyT1} and for $n \leq b \leq n^{k/(k-1)}/2^{k+6}$ it follows from Lemma~\ref{lem::manyT2}. Assume then that $n/2^{k+6} < b < n$. Observe that Waiter-Client games are bias-monotone, that is, if Waiter can force Client to fully claim, say, $t$ winning sets in a $(b' : 1)$ game, then Waiter can achieve the same goal if his bias is smaller than $b'$. We conclude that $S(T, n, b) \geq S(T, n, n) = \Omega \left(n^k \cdot (n+1)^{1-k} \right) = \Omega \left(n^k \cdot (b+1)^{1-k} \right)$.
\end{proof}

\section{Big Family Theorem} \label{sec::smallFamily}
  
In this section we state and prove the main game theoretic tool of this paper, which is also of independent interest. Roughly speaking, it asserts that if almost every $M$-subset of the board is ``good'', then Waiter can force Client to claim such a set in $M$ rounds. The main idea of our proof of Theorem~\ref{th::manyK} is to show that almost every graph Client might end up with, contains the correct number of copies of $K_k$ and then to apply our game theoretic tool to conclude that Waiter can force Client to build one of these graphs.      
  
\begin{theorem} \label{th::smallFamily}
Let $X$ be a set of size $N$, let ${\mathcal H}$ be an $M$-uniform family of subsets of $X$ and let $b = b(N)$ be a positive integer. Let $0 < \alpha < 1$ be a real number such that $|{\mathcal H}| \geq (1-\alpha^M) \binom{N}{M}$ and $b+1 \leq (1-\alpha) N/M$. Then, playing the $(b:1)$ Waiter-Client game $(X, {\mathcal H})$, Waiter has a strategy to force Client to fully claim some $A \in {\mathcal H}$ during the first $M$ rounds of the game.
\end{theorem}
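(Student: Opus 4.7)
The plan is to work with the complementary family $\mathcal{B} := \binom{X}{M} \setminus \mathcal{H}$, which satisfies $|\mathcal{B}| \leq \alpha^M \binom{N}{M}$. Since $|C_M|=M$, Waiter's task is equivalent to forcing $C_M \notin \mathcal{B}$. I will call $B \in \mathcal{B}$ \emph{alive} after $t$ rounds if $C_t \subseteq B$ and no element of $B$ has yet been claimed by Waiter, and I will track the number $\Phi_t$ of alive bad sets. Then $\Phi_0 \leq \alpha^M \binom{N}{M}$, and since $\Phi_M \in \{0,1\}$, Waiter wins precisely when $\Phi_M = 0$.

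Waiter's strategy will be greedy: in round $t+1$, for each free element $e$ set $a_e := |\{B \text{ alive} : e \in B\}|$ and offer the $b+1$ free elements with the smallest values of $a_e$. The key one-round observation is that an alive $B$ survives only when exactly one offered element lies in $B$ and Client chooses that element: if $|B \cap \text{offered}| \geq 2$ then Waiter claims some element of $B$ and kills it, while if $|B \cap \text{offered}| = 0$ then Client's pick lies outside $B$. Hence, whatever Client's response $e_j$ is, the new $\Phi$ is at most $a_{e_j}$, and therefore at most the $(b+1)$-th smallest value of $a_e$. Combining the double-counting identity $\sum_{e \in F_t} a_e = (M-t)\Phi_t$ (each alive $B$ contributes $|B \setminus C_t| = M-t$ incidences with $F_t$) with a simple pigeonhole argument, I obtain
\[
\Phi_{t+1} \;\leq\; \frac{(M-t)\,\Phi_t}{|F_t|-b}\,,\qquad |F_t|=N-t(b+1).
\]

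Iterating this recurrence and plugging in the bound on $\Phi_0$ gives
\[
\Phi_M \;\leq\; \alpha^M \prod_{t=0}^{M-1}\frac{N-t}{N-b-t(b+1)} \;=\; \alpha^M \prod_{t=0}^{M-1}\frac{1}{1 - b(t+1)/(N-t)}\,.
\]
The main obstacle, and the most delicate step, will be to show that this right-hand side is strictly below $1$ under the hypothesis $b+1 \leq (1-\alpha)N/M$. My plan is to take logarithms, use $-\log(1-x) \leq x/(1-x)$ together with the uniform estimate $b(t+1)/(N-t) = O(1-\alpha)$ to control the resulting sum by an integral of the form $\int_0^{1-\alpha} u/(1-u)\,du = \log(1/\alpha) - (1-\alpha)$, and thereby reduce matters to the elementary calculus inequality $\alpha\log(1/\alpha) < 1-\alpha$ for $\alpha \in (0,1)$ (which follows from $g(\alpha) := 1-\alpha+\alpha\log\alpha$ satisfying $g(1)=0$ and $g'(\alpha)=\log\alpha<0$ on $(0,1)$). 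Once the strict inequality is established, $\Phi_M$ is a non-negative integer strictly less than $1$ and so must vanish, meaning Waiter has forced $C_M \in \mathcal{H}$ in exactly $M$ rounds, as required.
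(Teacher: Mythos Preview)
Your setup, strategy, and recurrence are exactly those of the paper: work with the complement $\mathcal{B}$, have Waiter greedily offer the $b+1$ free elements of least degree into the surviving bad sets, and obtain
\[
\Phi_{t+1}\le \frac{(M-t)\,\Phi_t}{N-t(b+1)-b}\,,
\]
which after telescoping and absorbing $\binom{N}{M}$ gives
\[
\Phi_M\le \alpha^M\prod_{t=0}^{M-1}\frac{N-t}{N-b-t(b+1)}\,.
\]
Up to here your argument is correct and identical to the paper's.

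The divergence is in how you finish. Your plan---replace $-\log(1-x)$ by $x/(1-x)$, approximate the sum by $\frac{M}{1-\alpha}\int_0^{1-\alpha}\frac{u}{1-u}\,du$, and invoke $\alpha\log(1/\alpha)<1-\alpha$---is only a heuristic as written. The integral comparison requires justifying $x_t\approx (1-\alpha)(t+1)/M$ up to controlled error, and the final inequality $\alpha\log(1/\alpha)<1-\alpha$ has vanishing margin as $\alpha\to 1$, so the accumulated approximation errors (from $N-t\neq N$, from $-\log(1-x)\le x/(1-x)$, and from sum vs.\ integral) could easily swamp it. You have not shown they do not, so this step is a genuine gap.

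The paper avoids all of this with a one-line crude bound: use $N-t\le N$ in each numerator and $N-b-t(b+1)\ge N-M(b+1)+1>N-M(b+1)\ge \alpha N$ in each denominator to get
\[
\alpha^M\prod_{t=0}^{M-1}\frac{N-t}{N-b-t(b+1)}
<\alpha^M\left(\frac{N}{\alpha N}\right)^{\!M}=1\,.
\]
(Equivalently, in the paper's formulation, $\prod_{i=0}^{M-1}\frac{M-i}{N_i-b}\le \frac{M!}{(N-M(b+1))^M}$ and $\binom{N}{M}<N^M/M!$.) Replace your logarithm/integral plan with this, and your proof is complete and matches the paper's.
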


\begin{proof}
Let ${\mathcal H}$ be the family of sets described in the theorem and let ${\mathcal H}^c$ be its $M$-uniform complement, that is, ${\mathcal H}^c = \left\{A \in \binom{X}{M} : A \notin {\mathcal H} \right\}$. It follows by the assumption of the theorem that $|{\mathcal H}^c| \leq \alpha^M \binom{N}{M}$. In order to prove the theorem it suffices to prove that Waiter can prevent Client from claiming some $A \in {\mathcal H}^c$ during the first $M$ rounds of the game.    

Let $C_0 = W_0 = \emptyset$ and, for every positive integer $i$, let $C_i$ and $W_i$ denote the set of elements of Client and of Waiter respectively, immediately after the $i$th round. Moreover, for every non-negative integer $i$ let
$$
E_i = \{A \setminus C_i : A \in {\mathcal H}^c, A \cap W_i = \emptyset \textrm{ and } C_i \subseteq A\} \,.
$$

For every $x \in X$ and every non-negative integer $i$ let  
$$
\deg_i(x) = |\{S \in E_i : x \in S\}|
$$
and let $N_i = N - i(b+1)$. Note that $N_i$ is the number of free board elements immediately after round $i$.

The definition of $E_i$ implies that 
\begin{equation} \label{eq::sizeOfEdge}
|S| = M-i \textrm{ holds for every } S \in E_i. 
\end{equation}

Moreover, if in the $i$th round Client claimed $y$, then $S \cup \{y\} \in E_{i-1}$ whenever $S \in E_i$ and thus 
\begin{equation} \label{eq::degree}
|E_i| \leq \deg_{i-1}(y) \,. 
\end{equation}

We are now ready to describe Waiter's strategy. For every positive integer $i$, in the $i$th round, Waiter offers Client exactly $b+1$ free board elements $x$ whose value of $\deg_{i-1}(x)$ is minimal (breaking ties arbitrarily). It remains to prove that this is a winning strategy.

Let $x$ be an element Waiter has offered Client in the $i$th round. It follows by Waiter's strategy that  
\begin{equation} \label{eq::average}
\deg_{i-1}(x) \leq \frac{1}{N_{i-1} - b} \sum_{u \in X \setminus (C_{i-1} \cup W_{i-1})} \deg_{i-1}(u) = \frac{|E_{i-1}| (M - (i-1))}{N_{i-1} - b} \,,
\end{equation}
where the last equality follows since, for every $S \in E_{i-1}$, $|S| = M - (i-1)$ holds by~\eqref{eq::sizeOfEdge} and because $S \subseteq X \setminus (C_{i-1} \cup W_{i-1})$ holds by the definition of $E_{i-1}$. 

Hence, regardless of which element $y$ Client claims in round $i$, it follows by~\eqref{eq::degree} and~\eqref{eq::average} that 
\begin{equation} \label{eq::DecreasingSize}
|E_i| \leq \deg_{i-1}(y) \leq \frac{|E_{i-1}| (M - (i-1))}{N_{i-1} - b} \,.
\end{equation}

Consequently, since $|E_0| = |{\mathcal H}^c| \leq \alpha^M \binom{N}{M}$, by an iterated application of~\eqref{eq::DecreasingSize} we obtain 
\begin{eqnarray*}
|E_M| &\leq& \prod_{i=0}^{M-1} \frac{M-i}{N_i - b}\, |E_0| \\
&\leq& \frac{M!}{(N_{M-1} - b)^M}\, |E_0| \\
&\leq& \frac{M!}{(N - M(b+1))^M}\, \alpha^M \binom{N}{M} \\
&<& \left(\frac{\alpha N}{N - M(b+1)}\right)^M \\
&\leq& 1 \,,
\end{eqnarray*}
where in the last inequality we used the assumption that $b+1 \leq (1-\alpha) N/M$.

Hence $|E_M| < 1$, which means that Client did not claim all elements of any $A \in {\mathcal H}^c$ in the first $M$ rounds and so Waiter has achieved his goal. 
\end{proof}

\section{Probabilistic tools} \label{sec::probTools}

This section contains several results of different levels of difficulty. It is thus divided into two subsections: the first containing some useful terminology and simple facts about certain models of random graphs, and the second containing more advanced results, regarding the number of copies of a fixed graph in those random graphs.

\subsection{Preliminaries}

We begin this section with some more notation and terminology. A graph $H$ on at least 3 vertices is called \emph{$m_2$-balanced} if $m_2(H) = (e(H)-1)/(v(H)-2)$. 

We introduce two additional graph invariants $g_1(H)$ and $g_2(H)$ of a graph $H$, which will be used several times in the remainder of the paper. These invariants depend on the following two families of subgraphs of a graph $H$: 
\begin{eqnarray*}
\inter_H&=&\{H'\subseteq H\colon H'\neq H\text{ and $H'$ is a clique\,}\}\,,\\
\inter^c_H&=&\{H'\subseteq H\colon H'\notin\inter_H\,\}\,.  
\end{eqnarray*}
Given a graph $H$ with at least two edges let 
$$
g_1(H)= \max \left\{\frac{v(H)-v(H')}{e(H)-e(H')}\,\colon H'\in\inter^c_H \ \text{and}\ (2 \leq e(H') < e(H) \textrm{ or } (e(H') = 0 \text{ and } v(H') = 2))\right\},
$$
and
$$
g_2(H)= \min \left\{\frac{v(H')-2}{e(H')-1}\,\colon e(H')\ge 2 \ \text{and}\ (H'\in\inter_H \textrm{ or } H'=H)\right\}.
$$

The main objects of study in this section are random graph models which are slightly different from the classic $G(n,p)$ and $G(n,M)$ models. For a graph $H$ on the vertex set $\{v_1, \ldots, v_t\}$ we define a graph ${\mathbb B}_H(V_1, \ldots, V_t;n)$, called the \emph{$n$th-blow-up of $H$}, as follows. We replace every vertex $v_i$ of $H$ with a set $V_i$ of $n$ isolated vertices and every edge $v_i v_j$ of $H$ with the corresponding complete bipartite graph, that is, with the set of edges $\{xy : x \in V_i, y \in V_j\}$. Let $H'$ be a subgraph of $H$ on the vertex set $\{v_{i_1}, \ldots, v_{i_r}\}$. A copy of $H'$ in ${\mathbb B}_H(V_1, \ldots, V_t;n)$ is said to be \emph{canonical} if $v_{i_j} \in V_{i_j}$ for every $1 \leq j \leq r$. 

Let ${\mathbb G}(H,n,p)$ denote a graph obtained by randomly selecting every edge of the $n$th-blow-up of $H$ with probability $p$, independently of all other edge selections.     

For a graph $H$ and its subgraph $H'$ let $Y_{H'}$ be the random variable counting the number of canonical copies of $H'$ in ${\mathbb G}(H,n,p)$; note that ${\mathbb E}(Y_{H'}) = n^{v(H')} p^{e(H')}$. Let
$$
\hat{f}_H(n,p) = \min \left\{{\mathbb E}(Y_{H'}) : H' \subseteq H, \, e(H') \geq 1 \right\} 
$$
and let
$$
f_H(n,p) =  \min \left\{{\mathbb E}(Y_{H'}) : H'\in\inter^c_H,\, e(H')\geq 2\right\}\,.
$$

The following two simple lemmata provide an alternative characterization of $m_2$-balanced graphs and describe some useful properties of $g_1$, $g_2$,  $\hat{f}_H(n,p)$ and $f_H(n,p)$. 

\begin{lemma} \label{lem::m2balanced}\
\begin{enumerate}[(i)]
\item\label{m2bal}
A graph $H$ with at least two edges and no isolated vertices is $m_2$-balanced if and only if 
$$
\frac{v(H)-v(H')}{e(H)-e(H')} \leq \frac{v(H)-2}{e(H)-1}
$$
for every $H' \subsetneq H$ with $e(H') \geq 2$.
\item\label{g1less}
If $g_1(H) < g_2(H)$ and there exists a connected graph $H' \in \inter^c_H$ such that $2 \leq e(H') < e(H)$, then $g_2(H) < 1$. 
\item\label{g2m2}
If $H$ has at least two edges and $g_1(H)\le g_2(H)$, then $g_2(H)=1/m_2(H)$.
\end{enumerate}
\end{lemma}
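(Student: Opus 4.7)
The plan is to treat the three parts of Lemma~\ref{lem::m2balanced} independently; each is a self-contained algebraic or combinatorial statement that relies mainly on direct manipulation of the two definitions.

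For part~\ref{m2bal} the argument is purely algebraic. Fix $H'\subsetneq H$ with $e(H')\ge 2$; since $H$ has no isolated vertices, $H'\subsetneq H$ forces $e(H')<e(H)$, so both ratios in question are well-defined. Cross-multiplying the two inequalities
$$
\frac{e(H')-1}{v(H')-2}\ \le\ \frac{e(H)-1}{v(H)-2}\qquad\text{and}\qquad \frac{v(H)-v(H')}{e(H)-e(H')}\ \le\ \frac{v(H)-2}{e(H)-1}
$$
produces the same polynomial inequality, so the two conditions are equivalent. The only bookkeeping is to match the ranges of $H'$ in the two formulations: the definition of $m_2$-balanced quantifies over $H'\subseteq H$ with $v(H')\ge 3$, while \ref{m2bal} quantifies over $H'\subsetneq H$ with $e(H')\ge 2$. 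The case $H'=H$ is trivial, subgraphs with $e(H')\le 1$ give a non-positive ratio and cannot realise $m_2(H)$, and for $H'$ with no isolated vertices $e(H')\ge 2$ forces $v(H')\ge 3$; adding isolated vertices to such an $H'$ only weakens the second inequality, so it suffices to restrict to $H'$ without isolated vertices.

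For part~\ref{g1less} I would argue by contradiction: assume $g_2(H)\ge 1$. Then $H$ cannot contain a clique $K_r$ with $r\ge 3$ and $K_r\ne H$, for otherwise $K_r\in\inter_H$ would contribute $(r-2)/(\binom{r}{2}-1)\le 1/2$ to the minimum defining $g_2(H)$; in particular $H$ is triangle-free. Taking $H''=H$ in $g_2$ then gives $(v(H)-2)/(e(H)-1)\ge 1$, i.e.\ $v(H)\ge e(H)+1$. I now use the hypothesised connected $H'\in\inter^c_H$ with $2\le e(H')<e(H)$, splitting on whether $H'$ is a tree. If it is, then $v(H')=e(H')+1$; substituting into
$$
\frac{v(H)-v(H')}{e(H)-e(H')}\ \le\ g_1(H)\ <\ g_2(H)\ \le\ \frac{v(H)-2}{e(H)-1}
$$
and simplifying yields $v(H)\le e(H)$, contradicting $v(H)\ge e(H)+1$. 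Otherwise $H'$ contains a cycle $C$, which by triangle-freeness has length at least $4$; then $C$ is a connected non-clique subgraph with $v(C)=e(C)\ge 4$ and $e(C)\le e(H')<e(H)$, so $C\in\inter^c_H$ and contributes to $g_1(H)$. Substituting $C$ in place of $H'$ in the same chain of inequalities again forces $v(H)\le e(H)$, a contradiction.

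For part~\ref{g2m2}, the bound $g_2(H)\ge 1/m_2(H)$ is immediate, since $g_2(H)$ minimises $(v(H')-2)/(e(H')-1)$ over a subfamily of the graphs used to compute $1/m_2(H)$. For the reverse, let $\tilde H$ attain the maximum in the definition of $m_2(H)$; by removing isolated vertices (which can only increase the ratio) I may assume $\tilde H$ has none. If $\tilde H\in\inter_H\cup\{H\}$ then $\tilde H$ directly witnesses $g_2(H)\le 1/m_2(H)$. Otherwise $\tilde H\in\inter^c_H\setminus\{H\}$, and since $H$ has no isolated vertices we must have $2\le e(\tilde H)<e(H)$, so $\tilde H$ contributes to $g_1(H)$, giving
$$
\frac{v(H)-v(\tilde H)}{e(H)-e(\tilde H)}\ \le\ g_1(H)\ \le\ g_2(H)\ \le\ \frac{v(H)-2}{e(H)-1}.
$$
By the equivalence established in \ref{m2bal} this yields $(e(\tilde H)-1)/(v(\tilde H)-2)\le (e(H)-1)/(v(H)-2)$; since $\tilde H$ was a maximiser, so is $H$, and hence $g_2(H)\le (v(H)-2)/(e(H)-1)=1/m_2(H)$. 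The main obstacle is the case analysis in part~\ref{g1less}, where tree and cycle-containing $H'$ require separate treatment and triangle-freeness of $H$ is essential to guarantee that the cycle is long enough for the algebra to close; parts~\ref{m2bal} and~\ref{g2m2} then amount to careful bookkeeping layered on top of the single cross-multiplication identity from part~\ref{m2bal}.
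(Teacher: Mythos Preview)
Your arguments for parts~(i) and~(iii) are essentially the paper's: both rest on the single cross-multiplication identity
\[
\frac{v(H)-v(H')}{e(H)-e(H')}\le\frac{v(H)-2}{e(H)-1}
\quad\Longleftrightarrow\quad
\frac{e(H')-1}{v(H')-2}\le\frac{e(H)-1}{v(H)-2},
\]
and your extra bookkeeping about the ranges of $H'$ is, if anything, more explicit than the paper's. (One small remark: in~(iii) you invoke ``since $H$ has no isolated vertices'', which is not among the hypotheses of~(iii) as stated; the paper's proof carries the same tacit assumption, since its displayed minimum over $\inter^c_H$ ignores the possibility of $H'\in\inter^c_H$ with $e(H')=e(H)$ but $H'\neq H$. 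This is a shared oversight, not a defect specific to your argument.)

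For part~(ii) your proof is correct but substantially more elaborate than needed. The paper proceeds directly: since $H'\in\inter^c_H$ with $2\le e(H')<e(H)$, the hypothesis $g_1(H)<g_2(H)\le (v(H)-2)/(e(H)-1)$ gives
\[
\frac{v(H)-v(H')}{e(H)-e(H')}<\frac{v(H)-2}{e(H)-1},
\]
which by the identity above is equivalent to $(v(H)-2)/(e(H)-1)<(v(H')-2)/(e(H')-1)$. Because $H'$ is connected, $v(H')\le e(H')+1$, so the right-hand side is at most $1$; hence $g_2(H)\le (v(H)-2)/(e(H)-1)<1$. Your contradiction argument---deducing triangle-freeness from $g_2(H)\ge 1$, then splitting on whether $H'$ is a tree or contains a cycle---ultimately recovers the same bound $(v(H')-2)/(e(H')-1)\le 1$ (equal to $1$ in the tree case, strictly less in the cycle case), but through a detour. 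The observation you are missing is that connectedness of $H'$ \emph{alone} already gives this bound, so neither the contradiction framing nor the case split is necessary.
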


\begin{proof}
Starting with (\ref{m2bal}), let $H' \subsetneq H$ be an arbitrary subgraph with $e(H') \geq 2$. A straightforward calculation shows that
$$
\frac{v(H)-v(H')}{e(H)-e(H')} \leq \frac{v(H)-2}{e(H)-1} \,\, \Longleftrightarrow \,\, \frac{e(H') - 1}{v(H') - 2} \leq \frac{e(H) - 1}{v(H) - 2} \,.
$$
Since the right hand side of the above equivalence clearly holds for any $H' \subsetneq H$ with $e(H') \leq 1$, the assertion of the lemma follows by the definition of the maximum 2-density.

Now we prove (\ref{g1less}). Assume that $g_1(H) < g_2(H)$ and let $H' \in \inter^c_H$ be a connected graph satisfying $2 \leq e(H') < e(H)$. Then
$$   
\frac{v(H)-v(H')}{e(H)-e(H')}<\frac{v(H)-2}{e(H)-1}\,,
$$
which is equivalent to
$$
\frac{v(H)-2}{e(H)-1}<\frac{v(H')-2}{e(H')-1}\,.
$$
Since $H'$ is connected, the right hand side of the above inequality is at most $1$. Therefore 
$$
g_2(H)\le \frac{v(H)-2}{e(H)-1}<1\,.
$$

It remains to prove (\ref{g2m2}). Assume that $g_1(H)\le g_2(H)$. Then for every $H'\in\inter^c_H$ with $2\le e(H')<e(H)$ we have
$$   
\frac{v(H)-v(H')}{e(H)-e(H')}\le\frac{v(H)-2}{e(H)-1}\,,
$$
which is equivalent to
$$
\frac{v(H)-2}{e(H)-1}\le\frac{v(H')-2}{e(H')-1}\,.
$$
Since, moreover, $H \in \inter^c_H$, it follows that
\begin{equation}\label{m2a}
\frac{v(H)-2}{e(H)-1} = \min_{H' \in \inter^c_H, \atop e(H') \geq 2} \frac{v(H')-2}{e(H')-1} \,.
\end{equation}
The definition of $g_2(H)$ and~\eqref{m2a} imply that 
$$
g_2(H) = \min_{H' \subseteq H, \atop e(H') \geq 2} \frac{v(H')-2}{e(H')-1} = \frac{1}{m_2(H)} \,.
$$
\end{proof}

\begin{lemma} \label{lem::propf}  
Let $H$ be a graph with at least two edges and no isolated vertices and let $s$ be a positive integer.
\begin{enumerate}[(i)]
\item\label{hatdense} 
If $p \geq s^{-1/m_2(H)}$, then $\hat{f}_H(s,p) = s^2 p$.
\item\label{fg1} 
Let $c_0 \leq 1$ be a positive constant. If $p \leq c_0^{-1} s^{-g_1(H)}$, then $f_H(s,p) \geq c_0^{e(H)} s^{v(H)} p^{e(H)}$.
\item\label{fm2} 
Let $c_0 \leq 1$ be a positive constant. If $p \leq c_0^{-1} s^{-1/m_2(H)}$ and $H$ is $m_2$-balanced, then $f_H(s,p) \geq c_0^{e(H)} s^{v(H)} p^{e(H)}$.
\end{enumerate}
\end{lemma}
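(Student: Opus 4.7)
The plan is to treat each of the three parts as a short exponent comparison based on the identity $\mathbb{E}(Y_{H'}) = s^{v(H')} p^{e(H')}$; no probabilistic reasoning beyond this is needed. Each conclusion reduces to an elementary inequality between exponents that follows directly from the relevant density hypothesis.

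For part~(\ref{hatdense}), I would first observe that taking $H'$ to be any single edge of $H$ gives $\mathbb{E}(Y_{H'}) = s^2 p$, so $\hat{f}_H(s,p) \leq s^2 p$. For the reverse inequality, the minimum in the definition of $\hat{f}_H$ is realised on subgraphs without isolated vertices (adjoining such a vertex only multiplies the count by $s$), so it suffices to consider $H'$ with no isolated vertex and $e(H') \geq 1$. If $e(H') = 1$ then $v(H') = 2$ and the count equals $s^2 p$ on the nose; if $e(H') \geq 2$ then $(e(H')-1)/(v(H')-2) \leq m_2(H)$ by definition, and the hypothesis $p \geq s^{-1/m_2(H)}$ yields $p^{e(H')-1} \geq s^{-(v(H')-2)}$, i.e.\ $s^{v(H')} p^{e(H')} \geq s^2 p$.

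For parts~(\ref{fg1}) and~(\ref{fm2}), I would reduce the required inequality $\mathbb{E}(Y_{H'}) \geq c_0^{e(H)} s^{v(H)} p^{e(H)}$ to the single statement
\[
s^{v(H)-v(H')} p^{e(H)-e(H')} \leq c_0^{-e(H)}
\]
for every $H' \in \inter^c_H$ with $e(H') \geq 2$. The case $H' = H$ is trivial since $c_0 \leq 1$. Otherwise $H' \subsetneq H$; since $H$ has no isolated vertex, any proper subgraph in fact satisfies $e(H') < e(H)$, so $2 \leq e(H') < e(H)$ and the ratio $(v(H)-v(H'))/(e(H)-e(H'))$ is well defined and nonnegative. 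For~(\ref{fg1}) this ratio is bounded by $g_1(H)$ directly from the definition of $g_1$, and the hypothesis $p \leq c_0^{-1} s^{-g_1(H)}$ then bounds the left-hand side by $c_0^{-(e(H)-e(H'))} \leq c_0^{-e(H)}$ (using $c_0 \leq 1$). For~(\ref{fm2}), the $m_2$-balanced hypothesis combined with Lemma~\ref{lem::m2balanced}(\ref{m2bal}) gives exactly the same ratio bound with $g_1(H)$ replaced by $1/m_2(H)$, and the same calculation concludes.

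I do not expect any serious obstacle; the arguments are more or less direct translations of the density hypotheses. The only mildly delicate point is checking that the ratios used really appear in the maximum defining $g_1(H)$: this is ensured by the observation that $H$ itself lies in $\inter^c_H$ (covering $H'=H$) and that every proper subgraph $H' \in \inter^c_H$ with $e(H') \geq 2$ automatically satisfies $e(H') < e(H)$, thanks to $H$ having no isolated vertex, so $H'$ falls inside the admissible range for the maximum.
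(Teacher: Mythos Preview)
Your proposal is correct and follows essentially the same approach as the paper: both proofs reduce each part to an elementary exponent comparison, using the definition of $m_2(H)$ for part~(\ref{hatdense}), the definition of $g_1(H)$ for part~(\ref{fg1}), and Lemma~\ref{lem::m2balanced}(\ref{m2bal}) for part~(\ref{fm2}). The only cosmetic difference is that the paper argues part~(\ref{hatdense}) by contradiction and parts~(\ref{fg1})--(\ref{fm2}) by fixing a minimizer $H_0$, while you verify the inequality for every admissible $H'$ directly; your version is slightly more explicit about the edge cases (isolated vertices, $e(H')=1$, and why $e(H') < e(H)$ for proper $H'$).
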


\begin{proof} Starting with (\ref{hatdense}), if $H'$ is a subgraph of $H$ consisting of a single edge, then ${\mathbb E}(Y_{H'}) = s^2 p$. Thus $\hat{f}_H(s,p) \leq s^2 p$. Conversely, let $H'$ be a subgraph of $H$ such that $\hat{f}_H(s,p) = s^{v(H')} p^{e(H')}$ and suppose for a contradiction that $s^{v(H')} p^{e(H')} < s^2 p$. Then
$$
p < s^{- \frac{v(H') - 2}{e(H') - 1}} \leq s^{-1/m_2(H)} \,,
$$
contrary to our assumption.   

Next, we prove (\ref{fg1}) and (\ref{fm2}). Let $H_0\in\inter^c_H$ be a subgraph of $H$ such that $e(H_0) \geq 2$ and $f_H(s,p) = s^{v(H_0)} p^{e(H_0)}$. If $H_0 = H$, then our assertions clearly hold (with $c_0 = 1$); thus, assume that $H_0 \neq H$. If $p \leq  c_0^{-1} s^{-g_1(H)}$, then $p \leq c_0^{-1} s^{(v(H_0)-v(H))/(e(H)-e(H_0))}$ holds by the definition of $g_1$. Similarly, if $p \leq  c_0^{-1} s^{-1/m_2(H)}$ and $H$ is $m_2$-balanced, then $p \leq c_0^{-1} s^{(v(H_0)-v(H))/(e(H)-e(H_0))}$ holds by Lemma~\ref{lem::m2balanced}(\ref{m2bal}). Hence, in both cases (ii) and (iii), we have $s^{v(H_0)} p^{e(H_0)} \geq c_0^{e(H)-e(H_0)} s^{v(H)} p^{e(H)}$. Since $c_0 \leq 1$ by assumption, this concludes the proof of the lemma.   
\end{proof}

Until now, we have discussed the random graph model ${\mathbb G}(H,n,p)$ which is reminiscent of $G(n,p)$. However, our Big Family Theorem (Theorem~\ref{th::smallFamily}) applies to graphs with a prescribed number of edges. Hence, we will now introduce another random graph model ${\mathbb G}(H,n,M)$ which is reminiscent of $G(n,M)$. Let ${\mathbb G}(H,n,M)$ denote a graph obtained by selecting uniformly at random precisely $M$ edges of the $n$th-blow-up of $H$. For a graph $H$ and its subgraph $H'$ let $X_{H'}$ be the random variable counting the number of canonical copies of $H'$ in ${\mathbb G}(H,n,M)$. For every graph $H$ with at least one edge let
$$
\hat{f}_H(n,M) = \min \left\{{\mathbb E}(X_{H'}) : H' \subseteq H, \, e(H') \geq 1 \right\}
$$
and for every graph $H$ with at least two edges let
$$
f_H(n,M) = \min \left\{{\mathbb E}(X_{H'}) : H'\in\inter^c_H,\, e(H')\geq 2\right\}\,.
$$

There is a well-known asymptotic equivalence between the random graph models $G(n,p)$ and $G(n,M)$ and an analogous equivalence between ${\mathbb G}(H,n,p)$ and ${\mathbb G}(H,n,M)$ can be established as well. In this paper, we will prove results for the more convenient model ${\mathbb G}(H,n,p)$ and then transfer them (implicitly) to the model ${\mathbb G}(H,n,M)$ using Pittel's inequality (see, e.g.,~\cite{JLR}, (1.6) on page 17). In particular, we will use the fact that, for $p = M/(e(H) n^2)$, we have $\hat{f}_H(n,M) = \Theta(\hat{f}_H(n,p))$ and $f_H(n,M) = \Theta(f_H(n,p))$. This holds since, for every graph $H$ there exists a constant $c = c(H)$ such that for every sufficiently large integer $n$, $M = M(n) > c$, and $p = M/(e(H) n^2)$, we have ${\mathbb E}(X_{H}) \leq {\mathbb E}(Y_{H}) \leq 2 {\mathbb E}(X_{H})$.

\subsection{Counting copies of $H$ in ${\mathbb G}(H,n,M)$}

Our first result in this section asserts that the probability that ${\mathbb G}(H,n,M)$ contains too few canonical copies of $H$ is exponentially small. 


\begin{lemma} \label{lem::denseM}
Let $H$ be a graph, $M = M(n) = \omega(1)$ and $M \leq e(H) n^2/2$. 
Then there exists a constant $c > 0$ such that the probability that there are at least ${\mathbb E}(X_H)/2$ canonical copies of $H$ in ${\mathbb G}(H,n,M)$ is at least $1 - \exp(-c \hat{f}_H(n,M))$.  
\end{lemma}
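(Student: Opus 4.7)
The plan is to work in the Bernoulli model $\mathbb{G}(H,n,p)$ with $p=M/(e(H) n^2)$, prove a lower-tail bound for the number $Y_H$ of canonical copies of $H$ via Janson's inequality, and then transfer the conclusion to $\mathbb{G}(H,n,M)$ using Pittel's inequality. Write $Y_H=\sum_\phi\mathbf{1}_{A_\phi}$, with $\phi$ ranging over the $n^{v(H)}$ canonical embeddings of $H$ into the blow-up and $A_\phi$ the event that all $e(H)$ edges of $\phi(H)$ are selected, so that $\mu:=\mathbb{E}(Y_H)=n^{v(H)}p^{e(H)}$. Janson's lower-tail inequality gives
\[
\Pr(Y_H\leq\mu/2)\leq\exp\!\left(-\frac{\mu^2}{8\bar\Delta}\right), \qquad \bar\Delta=\mu+2\!\!\sum_{\substack{\phi\neq\psi\\ E(\phi)\cap E(\psi)\neq\emptyset}}\!\!\Pr(A_\phi\cap A_\psi).
\]

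The heart of the proof is the estimate $\bar\Delta=O(\mu^2/\hat{f}_H(n,p))$. I would group ordered pairs $(\phi,\psi)$ with nonempty edge intersection by their common vertex set $S\subseteq V(H)$: two canonical embeddings agree on $v_i$ iff $\phi(v_i)=\psi(v_i)$, and they then share exactly the edges of the induced subgraph $H[S]$. For each $S$ with $e(H[S])\geq 1$ there are at most $n^{v(H)}\cdot n^{v(H)-|S|}=n^{2v(H)-|S|}$ ordered pairs, each contributing $p^{2e(H)-e(H[S])}$; combined with $n^{|S|}p^{e(H[S])}=\mathbb{E}(Y_{H[S]})\geq\hat{f}_H(n,p)$ and the fact that the number of subsets of $V(H)$ depends only on $H$, this yields
\[
\bar\Delta=O\!\left(\mu^2\sum_{\substack{S\subseteq V(H)\\ e(H[S])\geq 1}}\frac{1}{n^{|S|}\,p^{e(H[S])}}\right)=O\!\left(\frac{\mu^2}{\hat{f}_H(n,p)}\right).
\]
Janson then gives $\Pr(Y_H\leq\mu/2)\leq\exp(-c_0\hat{f}_H(n,p))$ for some positive $c_0=c_0(H)$.

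To transfer, observe that $\{X_H\leq\mathbb{E}(X_H)/2\}$ is a monotone decreasing property of the edge set of the blow-up, and since $\mathbb{E}(X_H)\leq\mathbb{E}(Y_H)=\mu$ the corresponding event in $\mathbb{G}(H,n,p)$ is contained in $\{Y_H\leq\mu/2\}$. The monotone form of Pittel's inequality (which avoids the usual $\sqrt{M}$ factor and is immediate from $\Pr(\mathrm{Bin}(e(H)n^2,p)\leq M)\geq 1/2$ together with monotonicity) then yields $\Pr_M(X_H\leq\mathbb{E}(X_H)/2)\leq 2\exp(-c_0\hat{f}_H(n,p))$. Since $\hat{f}_H(n,p)=\Theta(\hat{f}_H(n,M))$, choosing $c=c(H)>0$ small enough converts this into the required bound $\exp(-c\hat{f}_H(n,M))$ whenever $\hat{f}_H(n,M)$ exceeds some threshold $K=K(H)$. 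In the complementary regime $\hat{f}_H(n,M)\leq K$, the elementary inequality $1-\exp(-c\hat{f}_H)\leq c\hat{f}_H$ reduces the statement to $\Pr(X_H\geq\mathbb{E}(X_H)/2)=\Omega(\hat{f}_H(n,M))$, which follows from the Paley--Zygmund inequality applied with the analogous variance bound $\mathrm{Var}(X_H)=O(\mathbb{E}(X_H)^2/\hat{f}_H(n,M))$, obtained by the same grouping argument as for $\bar\Delta$.

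The main obstacle is the intersection-graph bookkeeping behind the bound on $\bar\Delta$, and in particular verifying that the hidden constant there depends only on $H$; once that estimate is in hand, Janson's inequality, the monotone Pittel transfer, and the Paley--Zygmund sanity check in the bounded regime are all routine.
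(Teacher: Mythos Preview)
Your proof is correct and follows essentially the same route as the paper: prove the $\mathbb{G}(H,n,p)$ version via Janson's lower-tail inequality (the paper cites it as Theorem~2.14 of~\cite{JLR}) with the same estimate $\bar\Delta = O\big(\mathbb{E}(Y_H)^2/\hat f_H(n,p)\big)$, then transfer to $\mathbb{G}(H,n,M)$ by Pittel's inequality. Your additional Paley--Zygmund step for the regime $\hat f_H(n,M)\le K$ is a nice piece of bookkeeping that the paper omits; it is not needed in the paper's only application (Corollary~\ref{cor::denseH}), where $\hat f_H\to\infty$, but it does make the lemma hold as stated.
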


As noted above, Lemma~\ref{lem::denseM} is an immediate corollary of its ${\mathbb G}(H,n,p)$ analogue which can be stated as follows. 

\begin{lemma} \label{lem::denseP}
Let $H$ be a graph, $p = p(n)=\omega(n^{-2})$ and $p\leq 1/2$. Then there exists a constant $c > 0$ such that the probability that there are at least ${\mathbb E}(Y_H)/2$ canonical copies of $H$ in ${\mathbb G}(H,n,p)$ is at least $1 - \exp(-c \hat{f}_H(n,p))$.  
\end{lemma}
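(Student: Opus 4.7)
The plan is to apply Janson's inequality. Parametrise the canonical copies of $H$ by tuples $C = (w_1, \ldots, w_t) \in V_1 \times \cdots \times V_t$ and, for each such $C$, let $B_C$ be the event that all $e(H)$ edges of $C$ are present in ${\mathbb G}(H,n,p)$. Then $Y_H = \sum_C \mathbf{1}_{B_C}$ and $\mu := {\mathbb E}(Y_H) = n^{v(H)} p^{e(H)}$. Because distinct edges of the blow-up are included independently, the classical form of Janson's inequality yields
\[
\Pr\!\left[Y_H \leq \mu/2\right] \leq \exp\!\left(-\frac{\mu^2}{8(\mu + \Delta)}\right), \quad \text{where } \Delta = \sum_{\substack{C \neq C' \\ E(C)\cap E(C') \neq \emptyset}} \Pr[B_C \cap B_{C'}].
\]

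The heart of the proof is to bound $\Delta$. Two canonical copies $C = (w_1, \ldots, w_t)$ and $C' = (w'_1, \ldots, w'_t)$ share at least one edge if and only if the set $S := \{i : w_i = w'_i\}$ spans at least one edge of $H$; when this happens the shared edge set is exactly $E(H[S])$. Grouping ordered pairs by $S$, the number of such pairs is at most $n^{2v(H) - |S|}$ and the joint probability equals $p^{2e(H) - e(H[S])}$. Hence
\[
\Delta \;\leq\; \sum_{\substack{S \subseteq V(H) \\ e(H[S]) \geq 1}} n^{2v(H) - |S|}\, p^{2e(H) - e(H[S])} \;=\; \mu^2 \sum_{\substack{S \subseteq V(H) \\ e(H[S]) \geq 1}} \frac{1}{n^{|S|}\, p^{e(H[S])}}.
\]
Each term in the final sum is at most $1/\hat{f}_H(n,p)$, since $n^{|S|} p^{e(H[S])} = {\mathbb E}(Y_{H[S]})$ and $H[S]$ is a subgraph of $H$ with at least one edge. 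As $H$ is fixed, the sum has at most $2^{v(H)}$ terms, and we conclude that $\Delta = O\!\left(\mu^2/\hat{f}_H(n,p)\right)$, with the implicit constant depending only on $H$.

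Combining this with the trivial bound $\mu \geq \hat{f}_H(n,p)$ (obtained by taking $H' = H$ in the definition of $\hat{f}_H$), we get $\mu + \Delta = O\!\left(\mu^2/\hat{f}_H(n,p)\right)$, so $\mu^2/\bigl(8(\mu + \Delta)\bigr) \geq c\,\hat{f}_H(n,p)$ for a positive constant $c = c(H)$. Plugging into Janson's bound gives $\Pr[Y_H \leq \mu/2] \leq \exp\!\left(-c\,\hat{f}_H(n,p)\right)$, as required. The hypothesis $p = \omega(n^{-2})$ (together with $p \leq 1/2$) is used only to ensure that the simplest quantity $n^2 p$ entering the definition of $\hat{f}_H$ tends to infinity, so that the bound is non-trivial; when $\hat{f}_H(n,p)$ is bounded the conclusion is vacuous in any case. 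I expect the only delicate point to be the combinatorial bookkeeping for $\Delta$: in particular, one must check that the graph of edges shared by two canonical copies is always a canonical copy of an \emph{induced} subgraph $H[S]$, which is what makes the sum range over subsets $S \subseteq V(H)$ rather than over arbitrary subgraphs of $H$, and which is ultimately what allows the bound in terms of $\hat{f}_H(n,p)$.
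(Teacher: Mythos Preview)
Your proof is correct and is essentially the paper's own argument: the paper applies the Janson-type lower tail inequality (their Theorem~\ref{th::214}, i.e.\ Theorem~2.14 in~\cite{JLR}) and asserts without details that ``a straightforward calculation shows that $\bar\Delta \leq {\mathbb E}(Y_H)^2/\hat f_H(n,p)$'', which is exactly the calculation you carry out by grouping pairs $(C,C')$ according to the agreement set $S$. Your version makes explicit the $2^{v(H)}$ factor that the paper absorbs into the constant $c$; the key observation you flag---that the edge-intersection of two canonical copies is always a canonical copy of an \emph{induced} subgraph $H[S]$---is precisely what makes the ``straightforward calculation'' work.
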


In the proof of Lemma~\ref{lem::denseP} we will make use of the following concentration inequality.

\begin{theorem} [Theorem 2.14 in~\cite{JLR}] \label{th::214}
Let $\Gamma$ be a finite set, let $S$ be a family of subsets of $\Gamma$ and let $\Gamma_p$ be a random set obtained from $\Gamma$ by selecting every element of $\Gamma$ independently, with probability $p$. For every $A \in S$, let $I_A$ denote the indicator random variable for the event  $A \subseteq \Gamma_p$. Let $X = \sum_{A \in S} I_A$ and let $\bar\Delta = \sum \sum_{A \cap B \neq \emptyset} {\mathbb E}(I_A I_B)$. Then for $0 \leq t \leq {\mathbb E}(X)$ we have
$$
\Pr(X \leq {\mathbb E}(X) - t) \leq \exp \left(- \frac{t^2}{2 \bar\Delta} \right) \,.
$$
\end{theorem}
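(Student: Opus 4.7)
The plan is to use a Chernoff-style argument. For $s\ge 0$, Markov's inequality applied to $e^{-sX}$ gives
$$\Pr(X \leq \mathbb{E}(X) - t) = \Pr\!\bigl(e^{-sX}\ge e^{-s(\mathbb{E}(X)-t)}\bigr)\le e^{s(\mathbb{E}(X)-t)}\,\mathbb{E}\!\left[e^{-sX}\right],$$
so the entire proof reduces to an upper bound on the moment generating function of $-X$ of the form
$$\mathbb{E}\!\left[e^{-sX}\right] \le \exp\!\left(-s\,\mathbb{E}(X) + \tfrac{1}{2}s^{2}\bar\Delta\right), \qquad s\ge 0. \qquad(\star)$$
Substituting $(\star)$ into the Markov bound and optimizing over $s$ at $s=t/\bar\Delta$ (valid since $0\le t\le\mathbb{E}(X)$) produces exactly the required bound $\exp(-t^{2}/(2\bar\Delta))$.

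To prove $(\star)$ I would introduce $\psi(s):=\log\mathbb{E}[e^{-sX}]+s\,\mathbb{E}(X)$ and show $\psi(s)\le \tfrac{1}{2} s^2\bar\Delta$ for $s\ge 0$. Since $\psi(0)=0$ and $\psi'(0)=-\mathbb{E}(X)+\mathbb{E}(X)=0$, the desired bound follows by establishing $\psi''(s)\le \bar\Delta$ and integrating twice from $0$. A direct calculation identifies $\psi''(s)$ with the variance $\mathrm{Var}_s(X)$ of $X$ under the tilted probability measure $\mathbb{P}_s$ with Radon--Nikodym derivative proportional to $e^{-sX}$. Thus the problem reduces to the uniform variance bound
$$\mathrm{Var}_s(X)\le \bar\Delta \qquad\text{for every } s\ge 0. \qquad(\star\star)$$

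To prove $(\star\star)$ the main tool is the Harris/FKG correlation inequality applied to the underlying product measure $\mathbb{P}$ on $\{0,1\}^{\Gamma}$. Since each $I_A$ and each product $I_A I_B$ is an increasing function of the Bernoulli coordinates of $\Gamma_p$ while the tilting weight $e^{-sX}$ is decreasing in those coordinates, FKG gives the pointwise comparisons $\mathbb{E}_s[I_A]\le\mathbb{E}[I_A]$ and $\mathbb{E}_s[I_A I_B]\le\mathbb{E}[I_A I_B]$. I would then expand $\mathrm{Var}_s(X)=\sum_{A,B\in S} \mathrm{Cov}_s(I_A, I_B)$ and handle three cases: the diagonal terms $A=B$ contribute at most $\sum_A \mathbb{E}[I_A]$ (since $I_A^2=I_A$ and $\mathbb{E}_s[I_A]\le\mathbb{E}[I_A]$); the off-diagonal overlapping pairs $A\ne B$ with $A\cap B\ne\emptyset$ contribute at most $\sum_{A\ne B,\,A\cap B\ne\emptyset}\mathbb{E}[I_A I_B]$; and the off-diagonal disjoint pairs contribute non-positively via a Harris-type argument on the tilted measure (using that $I_A,I_B$ are increasing and that the tilt factor $e^{-sX}$ is log-submodular on the Boolean lattice). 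Summing these three contributions gives precisely $\mathrm{Var}_s(X)\le\bar\Delta$, which is $(\star\star)$.

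The main obstacle is $(\star\star)$. While the FKG bounds $\mathbb{E}_s[I_A]\le\mathbb{E}[I_A]$ and $\mathbb{E}_s[I_A I_B]\le\mathbb{E}[I_A I_B]$ are immediate consequences of FKG on the original product measure, proving that disjoint pairs contribute non-positively to $\mathrm{Var}_s(X)$ is delicate: under $\mathbb{P}_s$ the independence of $I_A$ and $I_B$ (for disjoint $A,B$) is destroyed, and one must invoke a Holley-type inequality for the non-product measure $\mathbb{P}_s$ to obtain the correct sign of the covariance. This is the technical heart of Janson's original proof of the lower-tail inequality. Once $(\star\star)$ is in hand, the two remaining steps---integrating $\psi''\le\bar\Delta$ twice from $0$ to obtain $(\star)$, and optimizing the Chernoff bound at $s=t/\bar\Delta$---are entirely routine.
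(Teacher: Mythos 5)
Your Chernoff-plus-MGF framework is the right shape, but the route you choose to prove $(\star\star)$ contains a false step: disjoint pairs $A,B$ do \emph{not} in general contribute non-positively to $\mathrm{Var}_s(X)$. For a concrete counterexample, take $\Gamma=\{1,2,3\}$, $p=1/2$, and $S=\{\{1\},\{3\},\{1,2\},\{2,3\}\}$, so that $X=x_1+x_3+x_1x_2+x_2x_3$; at $e^{-s}=1/2$ the tilted measure gives $\mathbb{E}_s[x_1]=\mathbb{E}_s[x_3]=17/61$ and $\mathbb{E}_s[x_1x_3]=5/61$, hence $\mathrm{Cov}_s(I_{\{1\}},I_{\{3\}})=5/61-(17/61)^2=16/3721>0$ even though $\{1\}$ and $\{3\}$ are disjoint. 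The correlation-inequality reasoning you gesture at cannot be made to work: each $I_A=\prod_{i\in A}x_i$ is a \emph{supermodular} function of the Boolean coordinates, so $X$ is supermodular and $e^{-sX}$ is log-\emph{sub}modular for $s\ge 0$; Holley's inequality and the FKG theorem require log-\emph{super}modularity of the density, and log-submodularity is well known \emph{not} to imply negative association of increasing events. So the ``Holley-type inequality for $\mathbb{P}_s$'' you invoke does not exist, and your bound $\mathrm{Var}_s(X)\le\bar\Delta$ is unsupported.

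The paper itself cites this result (Theorem~2.14 of Janson--\L uczak--Ruci\'nski) without proof, so there is no in-paper argument to compare against, but the standard proof of Janson's lower-tail inequality sidesteps the variance route entirely: it bounds $\phi'(s)$ rather than $\phi''(s)$, where $\phi(s)=-\log\mathbb{E}[e^{-sX}]$. Writing $Y_A=\sum_{B:\,B\cap A\neq\emptyset}I_B$ and $Z_A=X-Y_A$, one conditions on $I_A=1$ (after which the conditional law on the remaining coordinates is still a \emph{product} measure), applies FKG to the two decreasing functions $e^{-sY_A}$ and $e^{-sZ_A}$ on that product measure, and uses $\mathbb{E}[e^{-sX}]\le\mathbb{E}[e^{-sZ_A}]$ together with the independence of $Z_A$ from the coordinates in $A$ to obtain
$$
\phi'(s)=\sum_A\frac{\mathbb{E}[I_Ae^{-sX}]}{\mathbb{E}[e^{-sX}]}\ \ge\ \sum_A\Pr(I_A=1)\,\mathbb{E}\!\left[e^{-sY_A}\mid I_A=1\right]\ \ge\ \mathbb{E}(X)-s\bar\Delta\,,
$$
the last inequality from $e^{-u}\ge 1-u$ and $\sum_A\mathbb{E}[I_AY_A]=\bar\Delta$. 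Integrating this linear lower bound on $\phi'$ from $0$ to $s$ yields exactly $(\star)$, and the optimization at $s=t/\bar\Delta$ then finishes the proof. The key structural difference from your plan is that FKG is only ever applied to a product measure, where it is unconditionally valid, so the sign issue you ran into never arises.
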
 

\begin{proof}{\textbf{of Lemma~\ref{lem::denseP}}}
Let $S = \{H_1, \ldots, H_m\}$ be the family of all canonical copies of $H$ in the $n$th-blow-up of $H$. For every $1 \leq i \leq m$, let $I_i$ be the indicator random variable for the event $H_i \subseteq {\mathbb G}(H,n,p)$; then $Y_H = \sum_{i=1}^m I_i$. A straightforward calculation shows that $\bar\Delta := \sum \sum_{H_i \cap H_j \neq \emptyset} {\mathbb E}(I_i I_j) \leq {\mathbb E}(Y_H)^2/\hat{f}_H(n,p)$. Hence, applying Theorem~\ref{th::214} with $t = {\mathbb E}(Y_H)/2$, we obtain 
$$
\Pr(Y_H \leq {\mathbb E}(Y_H)/2) \leq \exp \left(- \frac{\hat{f}_H(n,p)}{8} \right) \,.
$$
\end{proof}

Before we state and prove the main result of this section, we need one more definition. We say that a family ${\mathcal F}$ of subgraphs of ${\mathbb B}_H(V_1, \ldots, V_t;n)$ is a \emph{sparse $H$-family} if the following two conditions hold:
\begin{description} 
\item [(i)] Every $G \in {\mathcal F}$ is a canonical copy of $H$. 
\item [(ii)] Any two distinct graphs $G_1, G_2 \in {\mathcal F}$ are either disjoint or
$G_1 \cap G_2\in\inter_H$. 
\end{description}
It turns out that, for the right values of $M$, with very high probability, ${\mathbb G}(H,n,M)$ contains a large sparse $H$-family. 

\begin{lemma} \label{lem::largeSparseFamily}
For every $\varepsilon > 0$ and every graph $H$ with at least two edges, there exist positive constants $\alpha < 1$, $\beta$ and $\delta$ such that the following holds. For every $n$ and $M$ such that
$$
M\geq\max\{\delta n,\,\varepsilon n^{2-1/m_2(H)}\}\text{ and }f_H(n,M) \leq \delta n^2,
$$ 
the probability that ${\mathbb G}(H,n,M)$ contains a sparse $H$-family with at least $\beta f_H(n,M)$ copies of $H$ is greater than $1 - \alpha^M$.
\end{lemma}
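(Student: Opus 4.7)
The plan is to work throughout in the more convenient random graph model $\mathbb{G}(H,n,p)$ with $p = M/(e(H) n^2)$, transferring the resulting tail bound back to $\mathbb{G}(H,n,M)$ at the end via Pittel's inequality. Since $M \geq \varepsilon n^{2-1/m_2(H)}$, we have $p \geq \varepsilon' n^{-1/m_2(H)}$, so Lemma~\ref{lem::propf}(\ref{hatdense}) yields $\hat f_H(n,p) = n^2 p = \Theta(M)$. An application of Lemma~\ref{lem::denseP} then shows that $Y_H \geq \mathbb{E}[Y_H]/2 \geq f_H(n,p)/2$ with probability at least $1 - \exp(-c_1 M)$; note that $H \in \inter^c_H$, so $\mathbb{E}[Y_H] \geq f_H(n,p)$.

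Next, I would introduce the \emph{conflict graph} $\Gamma$ whose vertices are the canonical copies of $H$ contained in $\mathbb{G}(H,n,p)$ and in which two copies $G_1, G_2$ are joined precisely when $G_1 \cap G_2 \in \inter^c_H$; an independent set in $\Gamma$ is then a sparse $H$-family. Since two canonical copies can only share vertices lying in the same parts $V_i$, the intersection $G_1\cap G_2$ is always an induced subgraph $H[I]$ of $H$, and the possible \emph{bad} types $H_0$ are precisely the induced subgraphs of $H$ of order at least $2$ that fail to be cliques. For each such $H_0$,
$$
\mathbb{E}[B_{H_0}] \;\leq\; C_H\, n^{2v(H)-v(H_0)}\, p^{2e(H)-e(H_0)} \;=\; C_H\,\frac{\mathbb{E}[Y_H]^2}{n^{v(H_0)}\, p^{e(H_0)}}.
$$
When $e(H_0)\geq 2$ the denominator is at least $f_H(n,p)$ by definition. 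When $e(H_0)\leq 1$ (so necessarily either $v(H_0) = 2$ and $e(H_0) = 0$, or $v(H_0)\geq 3$), the denominator is at least $n^2$: for $e(H_0) = 0$ this is immediate since $v(H_0)\geq 2$, and for $e(H_0) = 1$ we use the consequence $np = \Omega(1)$ of the hypothesis $M\geq \delta n$. Combined with $f_H(n,p)\leq \delta n^2$, the denominator is in every case at least $f_H(n,p)/\delta$ up to constants, so summing over the finitely many types gives $\mathbb{E}[e(\Gamma)] \leq C_H'\,\mathbb{E}[Y_H]^2/f_H(n,p)$. By Tur\'an's bound $\alpha(\Gamma)\geq Y_H^2/(Y_H + 2e(\Gamma))$, it now suffices to prove that $e(\Gamma) \leq c_2\, Y_H$ with probability at least $1 - \alpha^M/2$, which will give $\alpha(\Gamma) = \Omega(Y_H) = \Omega(f_H(n,p))$.

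The main obstacle will be promoting this first-moment bound into an exponential-in-$M$ upper-tail estimate on $e(\Gamma)$. Each variable $B_{H_0}$ is a subgraph count in $\mathbb{G}(H,n,p)$ for the graph $H\cup_{H_0}H$ obtained by gluing two copies of $H$ along $H_0$, so I would invoke a polynomial concentration inequality for subgraph counts -- for instance the Kim--Vu inequality, or an upper-tail/Janson-type argument with switchings -- using that the joint hypotheses $M\geq \delta n$ and $M\geq \varepsilon n^{2-1/m_2(H)}$ force the relevant concentration parameter of each $B_{H_0}$ to be $\Omega(M)$. A union bound over the event of step one, the finitely many tail events for the $B_{H_0}$, and the Tur\'an deletion then delivers a sparse $H$-family of size $\Omega(f_H(n,p))$ with probability greater than $1-\alpha^M$; Pittel's inequality transfers this to $\mathbb{G}(H,n,M)$ as required.
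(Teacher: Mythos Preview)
Your approach diverges from the paper's, and the step you yourself flag as ``the main obstacle'' is a genuine gap.

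First, a minor slip: the sufficient condition $e(\Gamma)\le c_2\,Y_H$ is too strong to hold in general. Since $\mathbb{E}[e(\Gamma)]$ is of order $\mathbb{E}[Y_H]^2/f_H(n,p)$ and one may well have $\mathbb{E}[Y_H]\gg f_H(n,p)$, typically $e(\Gamma)\gg Y_H$. The Tur\'an bound still yields $\alpha(\Gamma)=\Omega(f_H(n,p))$ under the weaker hypothesis $e(\Gamma)\le c_2\,Y_H^{2}/f_H(n,p)$ together with $Y_H\ge c_3\,\mathbb{E}[Y_H]$; that is what you should be aiming for.

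The real problem is obtaining $\Pr\bigl(e(\Gamma)>C\,\mathbb{E}[e(\Gamma)]\bigr)\le\alpha^{M}$. Each $B_{H_0}$ is a subgraph count for the glued graph $H\cup_{H_0}H$, and upper tails for subgraph counts at a constant multiple of the mean are notoriously delicate; neither Kim--Vu nor Janson-type arguments routinely deliver failure probability $\exp(-\Theta(n^{2}p))$ here. Your claim that the hypotheses ``force the relevant concentration parameter of each $B_{H_0}$ to be $\Omega(M)$'' is not substantiated, and I do not believe it holds for all the glued graphs that arise.

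The paper avoids this difficulty altogether. Instead of bounding $e(\Gamma)$ from above, it introduces an auxiliary variable $S_C(n,p)$, the maximum size of a family of canonical copies satisfying five structural properties (P1)--(P5): the family spans at most $Cn^{2}p$ edges, every edge lies in at most $Cf_H(n,p)/(n^{2}p)$ members, and three further constraints on pairwise overlaps. A random-acceptance argument (keep each copy independently with probability $\rho=f_H(n,p)/\mathbb{E}[Y_H]$, then discard copies violating (P2)--(P5)) gives $\mathbb{E}[S_C]\ge f_H(n,p)/2$. Properties (P1) and (P2) are precisely what make $S_C$ Lipschitz with small certificates, so Talagrand's inequality yields $\Pr(S_C\le f_H(n,p)/5)\le\exp(-c\,n^{2}p)$ directly; a final greedy cleanup using (P3)--(P5) extracts a sparse $H$-family of size $\Omega(S_C)$. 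The key insight you are missing is that \emph{lower}-tail concentration for a certifiable maximum is far more tractable than \emph{upper}-tail concentration for a raw subgraph count.
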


Since  $f_H(n,M)$ is of  the same order as $f_H(n,p)$ and since, by Chernoff's bound, the number of edges in the binomial random graph ${\mathbb G}(H,n,p)$ is sharply concentrated around its expectation, Lemma~\ref{lem::largeSparseFamily} is a straightforward corollary of its binomial analogue which can be stated as follows.

\begin{lemma} \label{lem::largeSparseFamilyBinomial}
For every $\varepsilon > 0$ and every graph $H$ with at least two edges, there exist positive constants $\alpha < 1$, $\beta$ and $\delta < 1$ such that the following holds. For every $n$ and $p$ such that
$$
p \geq \max\{(\delta n)^{-1},\,\varepsilon n^{-1/m_2(H)}\}\text{ and }f_H(n,p) \leq \delta n^2,
$$ 
the probability that ${\mathbb G}(H,n,p)$ contains a sparse $H$-family with at least $\beta f_H(n,p)$ copies of $H$ is greater than $1 - \alpha^{n^2 p}$.
\end{lemma}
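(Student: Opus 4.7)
The plan is the classical second-moment / Tur\'an deletion strategy. Write $G = \mathbb{G}(H,n,p)$, let $Y = Y_H$ count canonical $H$-copies in $G$, and let $Z$ count the unordered \emph{bad} pairs of copies, i.e.\ pairs $\{C_1,C_2\}$ of distinct canonical $H$-copies with $C_1 \cap C_2 \notin \inter_H$. Any sparse $H$-family is precisely an independent set in the conflict graph on the $Y$ copies with $Z$ edges, so by the Tur\'an / Caro--Wei bound its maximum size is at least $Y^2/(Y+2Z)$. Hence it suffices to show that, with probability at least $1-\alpha^{n^2 p}$, one simultaneously has $Y \geq \E(Y_H)/2$ and $Z \leq K\,\E(Y_H)^2/f_H(n,p)$. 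Combined with the inequality $f_H(n,p)\leq \E(Y_H)$, which holds because $H\in\inter^c_H$ and $e(H)\geq 2$, these two events force a sparse family of size $\Omega(f_H(n,p))$, so $\beta$ may be taken to be an appropriately small constant depending only on $H$ and $\delta$.

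First I would invoke Lemma~\ref{lem::denseP} to obtain $Y \geq \E(Y_H)/2$ with failure probability $\exp(-c\,\hat{f}_H(n,p))$; by Lemma~\ref{lem::propf}(\ref{hatdense}), the assumption $p \geq \varepsilon n^{-1/m_2(H)}$ gives $\hat{f}_H(n,p) = n^2 p$, so this is at most $\exp(-c n^2 p)\leq \tfrac12 \alpha^{n^2 p}$ for $\alpha<1$ chosen close enough to $1$. To estimate $\E(Z)$, I decompose according to the intersection type $H' = C_1\cap C_2$: for each $H'\in\inter^c_H$ with $v(H')\geq 2$ the contribution is $\Theta(\E(Y_H)^2/\E(Y_{H'}))$. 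When $e(H')\geq 2$, the definition of $f_H$ gives $\E(Y_{H'}) \geq f_H$. The remaining small types with $v(H')\geq 2$ and $e(H')\leq 1$ are handled by combining the trivial bounds $\E(Y_{H'})\geq n^2 p$ (when $e(H')=1$) or $\E(Y_{H'})\geq n^2$ (when $e(H')=0$) with the hypotheses $f_H\leq \delta n^2$ and $p\geq (\delta n)^{-1}$, which together imply $\E(Y_{H'}) = \Omega_\delta(f_H(n,p))$. Summing over the $O_H(1)$ intersection types yields $\E(Z) = O(\E(Y_H)^2/f_H(n,p))$.

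The hard part is the matching upper-tail concentration $Z \leq K\,\E(Y_H)^2/f_H(n,p)$ with failure probability $\exp(-\Omega(n^2 p))$, since classical subgraph upper-tail tools (Janson, Kim--Vu, Talagrand) typically deliver only stretched-exponential bounds. My plan is to decompose $Z=\sum_{H'}Z_{H'}$ and treat each $Z_{H'}$ as a count of canonical copies in $G$ of the glued graph obtained by identifying two copies of $H$ along a copy of $H'$; the crucial ingredient is that, in the regime $p\geq \varepsilon n^{-1/m_2(H)}$, every non-trivial subgraph of the blow-up satisfies $\E(Y_{H''})\geq n^2 p$ by Lemma~\ref{lem::propf}(\ref{hatdense}), so the relevant Janson-type deviation parameter is uniformly controlled on the scale $n^2 p$. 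Combined with Chernoff bounds for the edge counts between the parts $V_i,V_j$ of the blow-up (which already give sharp concentration at scale $n^2 p$), this should yield the desired exponential upper tail; a final union bound over the $O_H(1)$ intersection types, together with the Tur\'an / Caro--Wei inequality applied to the intersected event, completes the argument.
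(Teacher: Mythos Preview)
Your overall outline is natural, and the first two steps are fine: Lemma~\ref{lem::denseP} together with $p\geq \varepsilon n^{-1/m_2(H)}$ does give $\Pr(Y<\E(Y_H)/2)\leq \exp(-\Omega(n^2 p))$ (note that Lemma~\ref{lem::propf}(\ref{hatdense}) literally requires $\varepsilon=1$, but the same computation yields $\hat f_H(n,p)=\Omega_\varepsilon(n^2p)$ in general, which suffices). Your calculation of $\E(Z)=O(\E(Y_H)^2/f_H)$ is also correct, and the Tur\'an--Caro--Wei step would indeed produce a sparse family of size $\Omega(f_H)$ once $Y$ and $Z$ are controlled.

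The gap is exactly where you flag it: the upper-tail bound $\Pr(Z > K\,\E(Y_H)^2/f_H)\leq \exp(-\Omega(n^2p))$. Decomposing $Z=\sum_{H'}Z_{H'}$ and viewing each $Z_{H'}$ as a canonical subgraph count does not help, because upper tails of subgraph counts at a constant multiple of the mean are precisely the ``infamous upper tail'' problem, and none of the tools you list (Janson is a lower-tail inequality; Kim--Vu gives exponents on the scale $(\E)^{1/k}$; naive Talagrand on $Z_{H'}$ has huge Lipschitz constants) yield $\exp(-\Omega(n^2p))$ here. Your appeal to ``Chernoff for edge counts between the parts'' controls the number of edges but says nothing about how copies of the glued graph are distributed over those edges, so it cannot substitute for a genuine upper-tail estimate. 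As written, this step is a hope rather than an argument.

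The paper sidesteps the upper tail entirely. Instead of bounding $Z$, it works directly with the random variable $S_C(n,p)$, the maximum size of a family of canonical copies satisfying five auxiliary properties (P1)--(P5). The point of (P1) and (P2) is to make $S_C$ certifiable (witnessed by at most $Cn^2p$ edges) and Lipschitz (each edge affects $S_C$ by at most $Cf_H/(n^2p)$), which are exactly the hypotheses of Talagrand's inequality. A separate first-moment argument (randomly thin the set of copies by a factor $\rho=f_H/\E(Y_H)$ and delete the few copies violating (P2)--(P5)) shows $\E(S_C)\geq f_H/2$; Talagrand then gives $\Pr(S_C\leq f_H/5)\leq \exp(-\Omega(n^2p))$. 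Properties (P3)--(P5) guarantee that any such family can be thinned by a further constant factor to a sparse $H$-family. The conceptual difference from your plan is that the paper applies concentration to a \emph{bounded-influence} functional rather than to the raw pair count $Z$; this is what converts the stretched-exponential upper-tail regime into a clean $\exp(-\Omega(n^2p))$ bound.
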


In the proof of Lemma~\ref{lem::largeSparseFamilyBinomial} we will use the following well-known concentration inequality due to Talagrand~\cite{Talagrand}.

\begin{theorem} [Theorem 2.29 in~\cite{JLR}] \label{th::Talagrand}
Suppose that $Z_1, \ldots, Z_N$ are independent random variables taking their values in the set $\{0,1\}$. Suppose further that $X = f(Z_1, \ldots, Z_N)$, where $f : \{0,1\}^N \to {\mathbb R}$ is a function such that there exist constants $c_1, \ldots, c_N$ and a function $\psi : {\mathbb R}\to {\mathbb R}$ for which the following two conditions hold:
\begin{description}
\item [(a)] If $z, z' \in \{0,1\}^N$ differ only in the $k$th coordinate, then $|f(z') - f(z)| \leq c_k$.
\item [(b)] If $z \in \{0,1\}^N$, $r \in {\mathbb R}$ and $f(z) \geq r$, 
then there exists a set $J \subseteq \{1, \ldots, N\}$ with $\sum_{i \in J} c_i^2 \leq \psi(r)$, such that for all $y \in \{0,1\}^N$ with $y_i = z_i$ for every $i \in J$, we have $f(y) \geq r$.
\end{description}
Then for every $r \in {\mathbb R}$ and $t \geq 0$ we have
$$
\Pr (X \leq r-t) \Pr(X \geq r) \leq \exp(-t^2/(4\psi(r))) \,.
$$
\end{theorem}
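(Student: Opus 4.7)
The plan is to prove the lemma in three stages: (i) estimate the expected number of ``bad'' pairs of canonical copies of $H$ in $\mathbb{G}(H,n,p)$; (ii) apply a sampling-and-deletion alteration to produce a sparse $H$-family of expected size $\Omega(f_H(n,p))$; (iii) invoke Talagrand's inequality (Theorem~\ref{th::Talagrand}) to upgrade this to a high-probability statement with the required $\alpha^{n^2 p}$ failure rate.

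First, I would bound the expected number ${\mathbb E}(B)$ of ordered pairs $(G_1, G_2)$ of distinct canonical copies of $H$ in $\mathbb{G}(H,n,p)$ whose intersection $H_0 = G_1 \cap G_2$ is not a (possibly empty) clique. Decomposing by the isomorphism type of $H_0$, the contribution of each type is $\Theta({\mathbb E}(Y_H)^2 / {\mathbb E}(Y_{H_0}))$. For $H_0 \in \inter^c_H$ with $e(H_0) \geq 2$, we have ${\mathbb E}(Y_{H_0}) \geq f_H(n,p)$ directly. For bad $H_0$ with $e(H_0) \leq 1$---edgeless on $v(H_0) \geq 2$ vertices (giving ${\mathbb E}(Y_{H_0}) \geq n^2$) or one edge plus isolated vertices (giving ${\mathbb E}(Y_{H_0}) \geq n^3 p$)---both $n^2$ and $n^3 p$ dominate $f_H(n,p)$ by a constant factor under the hypotheses $f_H(n,p) \leq \delta n^2$ and $p \geq (\delta n)^{-1}$. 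Summing over the constantly many intersection types yields ${\mathbb E}(B) = O({\mathbb E}(Y_H)^2 / f_H(n,p))$.

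Second, I would run the sampling-and-deletion alteration: retain each canonical copy of $H$ appearing in $\mathbb{G}(H,n,p)$ independently with probability $q = \min\{1, {\mathbb E}(Y_H)/(2 {\mathbb E}(B))\}$ and then delete one copy per surviving bad pair. The expected size of the resulting sparse $H$-family is $\Omega(\min\{{\mathbb E}(Y_H), {\mathbb E}(Y_H)^2/{\mathbb E}(B)\}) = \Omega(f_H(n,p))$, using ${\mathbb E}(Y_H) \geq f_H(n,p)$ (since $H \in \inter^c_H$ with $e(H) \geq 2$). A direct verification using the definition of $m_2(H)$ shows that the hypothesis $p \geq \varepsilon n^{-1/m_2(H)}$ forces $f_H(n,p) \geq c(\varepsilon, H) \cdot n^2 p$, which later matches the required concentration exponent.

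Third, I would apply Theorem~\ref{th::Talagrand} to the random variable $X(\mathbb{G})$ equal to the maximum size of a sparse $H$-family in $\mathbb{G}$, viewed as a function of the $e(H) n^2$ independent edge indicators. Certifiability is immediate: $X \geq r$ is witnessed by the edges of any such witnessing family, a set of size at most $e(H) \cdot r$. The main obstacle is controlling the Lipschitz constants $c_k$: a single edge change can alter $X$ by up to $\Theta(n^{v(H)-2} p^{e(H)-1})$ in a worst-case ``book''-type configuration. I would handle this by first conditioning on the typicality event that every edge of the blow-up is contained in at most a constant multiple of its expected number of canonical $H$-copies; this event fails with probability $\exp(-\Omega(n^2 p))$ by Chernoff on edge-wise copy counts. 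On the typicality event the $c_k$ shrink to a manageable size, so that Talagrand applied with $r$ near the median of $X$ and deviation $t = \Theta({\mathbb E}(X))$ delivers a tail bound $\exp(-\Omega(n^2 p))$, which is of the form $\alpha^{n^2 p}$ for a suitable $\alpha < 1$. Combining this with stage (ii) completes the proof.
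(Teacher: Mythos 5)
Your proposal does not address the stated result at all. The statement you were asked to prove is Talagrand's concentration inequality itself (Theorem~\ref{th::Talagrand}), which the paper does not prove---it is quoted verbatim as Theorem~2.29 from~\cite{JLR}. What you have written instead is a sketch of Lemma~\ref{lem::largeSparseFamilyBinomial}, which is an \emph{application} of Talagrand's inequality, and your sketch even invokes Theorem~\ref{th::Talagrand} as a black box in stage~(iii). So the proposal is off-target: it assumes the very result it was supposed to establish.

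Since your writeup is really aimed at Lemma~\ref{lem::largeSparseFamilyBinomial}, let me also flag the genuine gap in that argument. In stage~(iii) you apply Talagrand to $X = $ ``maximum size of a sparse $H$-family,'' observe that the worst-case Lipschitz constant for a single edge flip is too large, and propose to fix this by first conditioning on a typicality event (no edge lies in too many canonical copies of $H$). But Talagrand's inequality, as stated, requires the coordinates $Z_1,\ldots,Z_N$ to be \emph{independent}, and conditioning on a global event of the product space destroys independence; one cannot simply apply the theorem ``on the typicality event.'' This is precisely the obstacle the paper engineers around: rather than conditioning, it replaces $X$ by $S_C(n,p)$, the maximum size of a family that is required by definition to satisfy properties (P1) and (P2) (bounded total edge count and bounded edge-multiplicity). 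These properties are built into the random variable, so the Lipschitz bounds $c_i = C f_H(n,p)/(n^2 p)$ and the certificate size $C n^2 p$ hold unconditionally, and Talagrand applies directly; the passage to a genuine sparse $H$-family is then done deterministically at the end using (P3)--(P5). Your alteration in stage~(ii) is also coarser than the paper's: the paper's deletion step is tuned against properties (P2$'$)--(P5$'$) separately so that the surviving family already has the small Lipschitz constants needed later, whereas a blanket ``delete one per bad pair'' does not control the edge-multiplicity of the surviving family and therefore cannot feed into Talagrand in the required way.
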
 

While proving Lemma~\ref{lem::largeSparseFamilyBinomial}, for a constant $C > 0$, we will find a large family ${\mathcal A}$ of canonical copies of $H$ in ${\mathbb G}(H,n,p)$ which satisfies the following five properties:

\begin{enumerate}[{\bf (P1)}]
\item\label{no_edges}
 The number of edges in the union of all graphs in the family ${\mathcal A}$ is at most $C n^2 p$. 
\item\label{few_copies_e}
Every edge of ${\mathbb G}(H,n,p)$ belongs to at most $C f_H(n,p)/(n^2p)$ graphs of ${\mathcal A}$.
\item\label{nonedge} 
For all graphs $H_1, H_2 \in {\mathcal A}$, if $V(H_1 \cap H_2) = \{x,y\}$ for some $x, y \in V({\mathbb G}(H, n, p))$, then $xy \in E(H_1 \cap H_2)$. 
\item\label{sparse_inters}
If $H_1 \in {\mathcal A}$ and $F$ is an induced subgraph of $H_1$ such that $v(F) \geq 3$ and $e(F) \leq 1$, then $H_1 \cap G \neq F$ for every $G \in {\mathcal A} \setminus \{H_1\}$.
\item\label{few_copies_2e}
If $H_1 \in {\mathcal A}$ and $F$ is an induced subgraph of $H_1$ such that $e(F) \geq 2$ and $F\in\inter^c_H$, then there are at most $C$ graphs $G \in {\mathcal A}\setminus \{H_1\}$ for which $H_1 \cap G = F$. 
\end{enumerate}

In the remainder of this section, for a positive constant $C$, we will denote by $S_C(n,p)$ the size of a  largest family of canonical copies of $H$ in ${\mathbb G}(H,n,p)$ which satisfies all of the properties (P\ref{no_edges})--(P\ref{few_copies_2e}).

The main ingredient of our proof of Lemma~\ref{lem::largeSparseFamilyBinomial} is showing that $S_C(n,p)$ is large. We will first prove that this is true in expectation.   

\begin{lemma} \label{lem::fnp}
For every $\varepsilon > 0$ and every graph $H$ with at least two edges, there exist positive constants $C$ and $\delta<1$ such that the following holds. For every $n$ and $p$ such that
$$
p \geq \max\{(\delta n)^{-1},\,\varepsilon n^{-1/m_2(H)}\}\text{ and }f_H(n,p) \leq \delta n^2,
$$  
we have
$${\mathbb E}(S_C(n,p)) \geq f_H(n,p)/2.$$
\end{lemma}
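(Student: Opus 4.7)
The approach is to combine random subsampling with a deletion argument. Set $q=\min\{1,\,f_H(n,p)/{\mathbb E}(Y_H)\}$, and form $\mathcal{A}_1$ by selecting each canonical copy of $H$ in ${\mathbb B}_H$ independently with probability $q$ and then retaining those copies whose edges all lie in ${\mathbb G}(H,n,p)$; thus ${\mathbb E}|\mathcal{A}_1|=q\,{\mathbb E}(Y_H)=f_H(n,p)$. I would then remove from $\mathcal{A}_1$ every copy involved in a violation of (P2)--(P5) to obtain a family $\mathcal{A}$, and handle (P1) separately by setting $\mathcal{A}=\emptyset$ on the (exponentially rare, by Chernoff) event that ${\mathbb G}(H,n,p)$ has more than $2e(H)n^2p$ edges. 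Since $\mathcal{A}$ always satisfies (P1)--(P5), we have $|\mathcal{A}|\le S_C(n,p)$ pointwise, so it suffices to show that the expected number of copies deleted from $\mathcal{A}_1$ is at most $f_H(n,p)/2$.

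The workhorse estimate is that for every fixed subgraph $F\subseteq H$ the expected number of ordered pairs of canonical copies of $H$ in $\mathcal{A}_1$ whose intersection equals $F$ is $O\!\bigl(q^2{\mathbb E}(Y_H)^2/{\mathbb E}(Y_F)\bigr)=O\!\bigl(f_H(n,p)^2/{\mathbb E}(Y_F)\bigr)$. Applied property-by-property: (P3) uses $F=\bar K_2$ with ${\mathbb E}(Y_F)=n^2$, giving $O(f_H^2/n^2)\le O(\delta f_H)$ via $f_H\le\delta n^2$; (P4) uses $F$ with $v(F)\ge 3$ and $e(F)\le 1$, where ${\mathbb E}(Y_F)\ge n^3p\ge\varepsilon n^2$ from $m_2(H)\ge 1$ (an assumption we may make as the statement is vacuous otherwise) and $p\ge\varepsilon n^{-1/m_2(H)}$, giving $O((\delta/\varepsilon)f_H)$; (P5) uses $F\in\inter^c_H$ with $e(F)\ge 2$, together with ${\mathbb E}(Y_F)\ge f_H(n,p)$ and Markov's inequality applied to $N^{\mathcal{A}_1}_{H_1,F}$, yielding $O(f_H/(C+1))$. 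Property (P2) I would handle through the identity $\sum_e(N^{\mathcal{A}_1}_e)^2=\sum_{H_1,H_2\in\mathcal{A}_1}e(H_1\cap H_2)$ combined with $N\,\mathbf{1}_{\{N>T\}}\le N^2/T$ at the threshold $T=Cf_H/(n^2p)$; the resulting deletion cost is $O(f_H/C)$ provided $n^2p/{\mathbb E}(Y_F)=O_{\varepsilon,H}(1)$ for every relevant $F$, which reduces to the lower bound $f_H\ge\varepsilon^{O(1)}n^2p$ (itself a consequence of $m_2(H)\ge m_2(H')$ for $H'\in\inter^c_H$) for non-cliques, and to $m_2(H)\ge m_2(K_r)=(r+1)/2$ for cliques $F=K_r\subseteq H$.

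Summing the five budgets and choosing $\delta$ sufficiently small and $C$ sufficiently large, both in terms of $\varepsilon$ and $H$, pushes the total expected deletion below $f_H(n,p)/2$ and finishes the proof. The main obstacle is property (P5) in the unbalanced regime ${\mathbb E}(Y_H)\gg f_H(n,p)$: without the subsampling step, the expected number of forbidden $(C+1)$-tuples at a fixed intersection $F$ scales as $({\mathbb E}(Y_H)/f_H)^{C+1}{\mathbb E}(Y_H)$, which is uncontrolled for any fixed $C$. The choice $q=f_H(n,p)/{\mathbb E}(Y_H)$ is exactly what trades this unbounded factor for the uniform pair bound $O(f_H^2/{\mathbb E}(Y_F))$, after which the minimality of $f_H$ among the ${\mathbb E}(Y_F)$'s for admissible $F$ and a single application of Markov close the argument.
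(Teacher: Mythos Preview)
Your proposal is correct and follows essentially the same scheme as the paper: independently subsample each canonical copy of $H$ in ${\mathbb G}(H,n,p)$ with probability $\rho=f_H(n,p)/{\mathbb E}(Y_H)$, delete the copies that violate (P2)--(P5), and handle (P1) separately. The only tactical differences are that the paper controls (P2) via Markov applied directly to the per-edge degree rather than your second-moment identity $\sum_e N_e^2=\sum_{H_1,H_2}e(H_1\cap H_2)$, invokes the hypothesis $p\ge(\delta n)^{-1}$ for (P4) (so the detour through $m_2(H)\ge 1$ is unnecessary), and disposes of (P1) by a conditioning decomposition that exploits the deterministic bound, coming from (P2), on the size of any (P2)--(P5)-family in terms of $e({\mathbb G}(H,n,p))$, rather than by zeroing out $\mathcal{A}$ on a Chernoff-rare event.
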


\begin{proof}
We denote by $\tS(n,p)$ the size of a  largest family of canonical copies of $H$ in ${\mathbb G}(H,n,p)$ which satisfies all of the properties (P\ref{few_copies_e})--(P\ref{few_copies_2e}). Our first goal is to prove that $\E(\tS(n,p))\geq  2f_H(n,p)/3$.

Let $\rho = \rho(n,p) = f_H(n,p)/{\mathbb E}(Y_H)$. We `accept' every canonical copy of $H$ in ${\mathbb G}(H,n,p)$ independently with probability $\rho$. Let ${\mathcal A}$ denote the family of accepted copies of $H$ and let  $Z_H=|{\mathcal A}|$. Clearly ${\mathbb E}(Z_H) = f_H(n,p)$. Now we delete (deterministically) some copies from ${\mathcal A}$ so that the remaining family satisfies Properties (P\ref{few_copies_e})--(P\ref{few_copies_2e}). More precisely, we delete from ${\mathcal A}$  a copy $H_1$ if and only if at least one of the following Properties is satisfied.

\begin{enumerate}[{\bf (P1$'$)}]
 \setcounter{enumi}{1}
\item\label{few_copies_e'}
$H_1$ shares an edge with more than $C f_H(n,p)/(n^2 p)$ graphs of ${\mathcal A}$.
\item\label{nonedge'} 
There exists $H_2 \in {\mathcal A}$ such that $V(H_1 \cap H_2) = \{x,y\}$ for some $x, y \in V({\mathbb G}(H, n, p))$ and $xy \notin E(H_1 \cap H_2)$. 
\item\label{sparse_inters'}
There exist an induced subgraph $F$ of $H_1$ and $G \in {\mathcal A} \setminus \{H_1\}$ such that $v(F) \geq 3$,  $e(F) \leq 1$ and $H_1 \cap G = F$.
\item\label{few_copies_2e'}
There exists an induced subgraph  $F$ of $H_1$ such that $e(F) \geq 2$, $F\in\inter^c_H$ and there are more than $C$ graphs $G \in {\mathcal A}\setminus \{H_1\}$ for which $H_1 \cap G = F$. 
\end{enumerate}

We will prove that the expected number of the remaining copies in ${\mathcal A}$ is at least $2f_H(n,p)/3$, provided that $C$ is sufficiently large. 

Starting with (P\ref{few_copies_2e'}$'$), let $H_1$ be an arbitrary canonical copy of $H$ in ${\mathbb G}(H,n,p)$ and let $F$ be an induced subgraph of $H_1$, where $e(F) \geq 2$ and $F\in\inter^c_H$. The expected number of accepted copies of $H$ whose intersection with $H_1$ is precisely $F$ is at most
\begin{equation} \label{eq::v}
\rho n^{v(H) - v(F)} p^{e(H) - e(F)} = \frac{\rho {\mathbb E}(Y_H)}{{\mathbb E}(Y_F)} = \frac{f_H(n,p)}{{\mathbb E}(Y_F)} \leq 1 \,,
\end{equation}
where the last inequality follows by the definition of $f_H(n,p)$. 

Let $A_F$ denote the event that there are more than $C$ accepted copies of $H$ whose intersection with $H_1$ is precisely $F$. It follows by~\eqref{eq::v} and by Markov's inequality that $\Pr(A_F) < 1/C$. Summing over all choices of $H_1$ and $F$ as above, we conclude that the expected number of accepted copies of $H$ which intersect more than $C$ other accepted copies of $H$ on a given induced subgraph $F$ with at least two edges is at most  
$$
{\mathbb E}(Z_H) 2^{v(H)} /C = f_H(n,p) 2^{v(H)}/C \leq f_H(n,p)/12 \,,
$$ 
where the last inequality holds for sufficiently large $C$. 

Next, we consider (P\ref{sparse_inters'}$'$). Given a canonical copy $H_1$ of $H$ in ${\mathbb G}(H,n,p)$ and an induced subgraph $F$ of $H_1$ such that $v(F) \geq 3$ and $e(F) \leq 1$, the expected number of accepted copies of $H$ whose intersection with $H_1$ is precisely $F$ is at most 
\begin{equation} \label{eq::iv}
\rho n^{v(H) - 3} p^{e(H) - 1} = \frac{f_H(n,p)}{{\mathbb E}(Y_H)} \cdot \frac{{\mathbb E}(Y_H)}{n^3 p} = \frac{f_H(n,p)}{n^3 p} \leq \delta^2 \,, 
\end{equation}
where the last inequality holds since $f_H(n,p) \leq \delta n^2$ and $p \geq (\delta n)^{-1}$ by assumption.

Summing over all choices of $H_1$ and $F$ as above, we conclude that the expected number of accepted copies of $H$ which intersect another accepted copy of $H$ on a given induced subgraph $F$ with at least three vertices and at most one edge is at most
$$
{\mathbb E}(Z_H) 2^{v(H)} \delta^2 = f_H(n,p) 2^{v(H)} \delta^2 \leq f_H(n,p)/12 \,,  
$$
where the last inequality holds for sufficiently small $\delta$.

The argument for (P\ref{nonedge'}$'$), which is described below, is similar. Given a canonical copy $H_1$ of $H$ in ${\mathbb G}(H,n,p)$ and two non-adjacent vertices $x, y \in V(H_1)$, the expected number of accepted copies of $H$ whose intersection with $H_1$ is precisely $\{x,y\}$ is at most
$$
\rho n^{v(H) - 2} p^{e(H)} = \frac{f_H(n,p)}{{\mathbb E}(Y_H)} \cdot \frac{{\mathbb E}(Y_H)}{n^2} = \frac{f_H(n,p)}{n^2} \leq \delta \,. 
$$
Summing over all choices of $H_1$ and $x,y \in V(H_1)$, we conclude that the expected number of accepted copies $H_1$ of $H$ for which there is an accepted copy $H_2$ of $H$ which intersects $H_1$ on two non-adjacent vertices is at most
$$
{\mathbb E}(Z_H) \binom{v(H)}{2} \delta \leq f_H(n,p) v(H)^2 \delta \leq f_H(n,p)/12 \,,  
$$
where the last inequality holds for sufficiently small $\delta$. 

Finally, we consider (P\ref{few_copies_e'}$'$). Let $H_1$ be a canonical copy of $H$ in ${\mathbb G}(H,n,p)$ and let $e\in E(H_1)$. Given a proper subgraph $F$ of $H_1$ such that $e\in E(F)$, the expected number of accepted copies of $H$ whose intersection with $H_1$ is precisely $F$ is at most
$$ 
\rho n^{v(H) - v(F)} p^{e(H) - e(F)} = \frac{\rho {\mathbb E}(Y_H)}{{\mathbb E}(Y_F)} \le \frac{f_H(n,p)}{\hat f_H(n,p)}\le \frac{f_H(n,p)}{\varepsilon' n^2 p} \,,
$$
where the last inequality holds since the assumption $p \geq \varepsilon n^{-1/m_2(H)}$ implies that $\hat f_H(n,p) \ge \varepsilon' n^2 p$ for some constant $\varepsilon' > 0$ (which depends on $\varepsilon$ and $H$). Therefore, by Markov's inequality, the probability that $e$ belongs to more than $C f_H(n,p)/(n^2 p)$ accepted copies of $H$ is at most 
$$
\frac{2^{v(H)}f_H(n,p)/(\varepsilon' n^2 p)}{C f_H(n,p)/(n^2 p)} \leq \frac{2^{v(H)}}{\varepsilon' C} \,.
$$  
Summing over all choices of $H_1$ and $e \in E(H_1)$, we conclude that the expected number of accepted copies of $H$ which contain an edge that belongs to at least $C f_H(n,p)/(n^2 p)$ other accepted copies of $H$ is at most  
$$
\mathbb{E}(Z_H) \cdot e(H) \cdot \frac{2^{v(H)}}{\varepsilon' C} \leq f_H(n,p)/12 \,,
$$
provided that $C$ is sufficiently large.
 
We conclude that the family of copies of $H$ which remain in ${\mathcal A}$ after our deletion process satisfies Properties (P\ref{few_copies_e})--(P\ref{few_copies_2e})  
and so 
\begin{equation}\label{war0}
{\mathbb E} \left(\tS(n,p)\right) \geq f_H(n,p) - 4 \cdot f_H(n,p)/12 = 2 f_H(n,p)/3 \,.
\end{equation}

In order to prove that ${\mathbb E}(S_C(n,p)) \geq f_H(n,p)/2$ (thus completing the proof of the lemma), it remains to address Property (P\ref{no_edges}). This turns out to be a straightforward consequence of the fact that the number of edges of ${\mathbb G}(H, n, p)$ has the binomial distribution $Bin(e(H) n^2, p)$ and that, by Property (P\ref{few_copies_e'}), each edge of ${\mathbb G}(H, n, p)$ is contained in a limited number of copies from our family. Indeed, note that 
\begin{eqnarray}\label{war1}
\begin{aligned}
{\mathbb E} \left(\tS(n,p)\right) &= {\mathbb E} \left(\tS(n,p) \mid e \left(\mathbb G(H,n,p) \right) \leq C n^2p \right) \cdot \Pr \left(e \left(\mathbb G(H,n,p) \right) \leq C n^2 p \right) \\
&+ \sum_{M > Cn^2p} {\mathbb E} \left(\tS(n,p) \mid e \left(\mathbb G(H,n,p)\right) = M \right) \cdot \Pr \left(e \left(\mathbb G(H,n,p) \right) = M \right).
\end{aligned}
\end{eqnarray}

For the first part of the sum in~\eqref{war1} we have
\begin{equation}\label{war2}
\begin{aligned}
{\mathbb E} \big(\tS(n,p) &\mid e \left(\mathbb G(H,n,p)\right) \leq C n^2 p \big) \cdot \Pr \left(e \left(\mathbb G(H,n,p)\right) \leq Cn^2p \right) \\
& = {\mathbb E} \left(S_C(n,p) \mid e \left(\mathbb G(H,n,p)\right) \leq C n^2p\right) \cdot \Pr \left(e \left(\mathbb G(H,n,p)\right) \leq Cn^2p \right)\\
&\leq {\mathbb E} \left(S_C(n,p)\right).
\end{aligned}
\end{equation}

For the second part of the sum in~\eqref{war1}, it follows from (P\ref{few_copies_e'}) and from the fact that $e({\mathbb G}(H, n, p)) \sim Bin(e(H) n^2, p)$ that, for sufficiently large $C$, we have
\begin{eqnarray}\label{war3}
&&\sum_{M> Cn^2p}{\mathbb E}\left(\tS(n,p)\mid e\left(\mathbb G(H,n,p)\right) = M\right) \cdot \Pr \left(e \left(\mathbb G(H,n,p)\right) = M\right)\nonumber\\
&\le& \sum_{M> Cn^2p}{\mathbb E} \left(\frac{Cf_H(n,p)}{n^2p} \cdot e \left(\mathbb G(H,n,p) \right) \mid e \left(\mathbb G(H,n,p) \right) = M \right) \cdot \Pr \left(Bin \left(e(H) n^2, p \right) =  M \right) \nonumber\\
& = & \frac{C f_H(n,p)}{n^2p} \sum_{M > Cn^2p} M \cdot \Pr \left(Bin \left(e(H) n^2, p \right) = M \right) \leq \frac{f_H(n,p)}{6} \,.
\end{eqnarray}
Combining~\eqref{war0}, \eqref{war1}, \eqref{war2} and \eqref{war3}, we conclude that $\E(S_C(n,p))\ge f_H(n,p)/2$.   
\end{proof}

\bigskip

\begin{proof}{\textbf{of Lemma~\ref{lem::largeSparseFamilyBinomial}}}
For a constant $C$ let ${\mathcal F}_C$ be a largest family of canonical copies of $H$ in ${\mathbb G}(H,n,p)$ which satisfies Properties (P\ref{no_edges})--(P\ref{few_copies_2e}). Then $|{\mathcal F}_C| = S_C(n,p)$. In the remainder of this proof we will abbreviate  $S_C(n,p)$ under $S_C$.  

Let $e_1, \ldots, e_N$ denote the edges of the $n$th-blow-up of $H$. For every $1 \leq i \leq N$, let $Z_i$ be the indicator random variable for the event $e_i \in E({\mathbb G}(H,n,p))$ and let $c_i = C f_H(n,p)/(n^2 p)$. Clearly $S_C$ is a function of $Z_1, \ldots, Z_N$ and, by Property (P\ref{few_copies_e}), Part (a) of Theorem~\ref{th::Talagrand} is satisfied. Furthermore, by Property (P\ref{no_edges}) we can `certify' the existence of a family ${\mathcal F}_C$ as above by revealing at most $C n^2 p$ edges. Combined with Property (P\ref{few_copies_e}) and the choice of the $c_i$'s, setting $\psi \equiv C n^2 p (C f_H(n,p)/(n^2 p))^2$ we deduce that Part (b) of Theorem~\ref{th::Talagrand} is satisfied as well. 

Let $m$ be a median of $S_C$. It is known (cf. the comments following the statement of Theorem 2.29 in~\cite{JLR}) that if a random variable satisfies the assumptions of Theorem~\ref{th::Talagrand}, then the difference between its median and expectation is not greater than $\left(2\ln 2\sum_{k=1}^N c_k^2 \right)^{1/2}$. Therefore, for $S_C$ we have
\begin{eqnarray}\label{eq::med1}
\left|{\mathbb E}(S_C) - m \right| &\leq& \left(2\ln 2\sum_{k=1}^{e(H)n^2} c_k^2 \right)^{1/2} =
\left(2\ln 2\cdot e(H)n^2 \cdot \frac{C^2 f^2_H(n,p)}{n^4 p^2} \right)^{1/2} \nonumber\\
&\leq& 2 e(H) C \delta \cdot f_H(n,p) \leq f_H(n,p)/20,
\end{eqnarray}
where the second inequality holds since $p \geq (\delta n)^{-1}$ by assumption and the last inequality holds by taking $\delta$ to be sufficiently small with respect to $C$. Observe that the assumptions of Lemmata~\ref{lem::largeSparseFamilyBinomial} and~\ref{lem::fnp} are the same. Hence, for sufficiently small $\delta$ and sufficiently large $C$, we have 
\begin{equation} \label{eq::largeExpectation}
{\mathbb E}(S_C) \geq f_H(n,p)/2.
\end{equation} 
Using~\eqref{eq::largeExpectation} and (\ref{eq::med1}), we infer that  
\begin{equation}\label{eq::med2}
m \geq 0.9 {\mathbb E}(S_C).
\end{equation} 

Applying Theorem~\ref{th::Talagrand} with $r = m$ and $t = {\mathbb E}(S_C)/2$, and using the definition of a median and inequalities~\eqref{eq::largeExpectation} and~\eqref{eq::med2}, we obtain 
 
\begin{eqnarray*}
\Pr(S_C \leq f_H(n,p)/5) &\leq& \Pr(S_C \leq 2\,{\mathbb E}(S_C)/5)\\
& \leq& \Pr(S_C \leq m-{\mathbb E}(S_C)/2)\\ 
&\leq& 2 \exp \left(-\frac{ ({\mathbb E}(S_C)/2)^2}{4 (C n^2 p)(C f_H(n,p)/(n^2 p))^2} \right) \\
&\leq& 2 \exp \left(-\frac{ (f_H(n,p)/4)^2}{4 (C n^2 p)(C f_H(n,p)/(n^2 p))^2} \right) \\
&<& \exp \left(- \frac{n^2 p}{65 C^3} \right) \,. 
\end{eqnarray*}

Given a family ${\mathcal F}_C$ as above, one can construct a subfamily ${\mathcal F}'$ such that for every two distinct $H_1, H_2 \in {\mathcal F}'$ either $H_1 \cap H_2$ is the empty set or $H_1 \cap H_2\in\inter_H$. For this purpose, take an arbitrary $H_1 \in {\mathcal F}_C$, put it in ${\mathcal F}'$, delete from ${\mathcal F}_C$ all graphs $G$ whose intersection with $H_1$ is non-empty and not in $\inter_H$ and repeat this process until ${\mathcal F}_C = \emptyset$. It follows by Properties (P\ref{nonedge})--(P\ref{few_copies_2e}) that $|{\mathcal F}'| \geq \frac{1}{2^{v(H)}C + 1} |{\mathcal F}_C|$. Since, in particular, ${\mathcal F}'$ is a sparse $H$-family, the assertion of the lemma holds for $\alpha = e^{- 1/(65 C^3)}$ and $\beta = \frac{1}{5(2^{v(H)}C + 1)}$.  
\end{proof}

In the following two lemmata we describe two intervals such that, if $p$ is in one of these intervals, then, with very high probability, ${\mathbb G}(H,n,p)$ contains a sparse $H$-family with $\Theta \left(\E(Y_H) \right)$ copies of $H$.

\begin{lemma} \label{lem::manyHgnp}
For every $\varepsilon > 0$ and every graph $H$ such that $e(H) \geq 3$ and $K_{1,2} \subseteq H$, there exist positive constants $\alpha < 1$, $\beta$ and $\delta < 1$ such that the following holds. For every $n$ and $p$ such that
\begin{equation}\label{pcond}
\eps n^{-g_2(H)}\le p \le \delta n^{-g_1(H)},
\end{equation}
the probability that ${\mathbb G}(H,n,p)$ contains a sparse $H$-family with at least $\beta n^{v(H)}p^{e(H)}$ copies of $H$ is greater than $1 - \alpha^{n^2 p}$.
\end{lemma}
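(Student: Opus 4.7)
The plan is to derive Lemma~\ref{lem::manyHgnp} by combining Lemma~\ref{lem::largeSparseFamilyBinomial}, which supplies a sparse $H$-family of size proportional to $f_H(n,p)$ with very high probability, with Lemma~\ref{lem::propf}(\ref{fg1}), which converts such a bound into one of order $n^{v(H)}p^{e(H)}$. Throughout the argument we may assume $g_1(H)\le g_2(H)$, since otherwise the interval $[\eps n^{-g_2(H)},\delta n^{-g_1(H)}]$ is empty for sufficiently large $n$ and the statement is vacuous.

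First, by Lemma~\ref{lem::m2balanced}(\ref{g2m2}) the inequality $g_1(H)\le g_2(H)$ gives $g_2(H)=1/m_2(H)$, so the hypothesis $p\ge\eps n^{-g_2(H)}$ coincides with the second lower bound $p\ge\eps n^{-1/m_2(H)}$ required by Lemma~\ref{lem::largeSparseFamilyBinomial}. To verify the hypothesis $f_H(n,p)\le \delta' n^2$, where $\delta'$ denotes the constant supplied by Lemma~\ref{lem::largeSparseFamilyBinomial}, I would observe that $H\in\inter^c_H$ gives $f_H(n,p)\le \E(Y_H)=n^{v(H)}p^{e(H)}$. Taking $H'$ to be the graph on two vertices with no edges in the definition of $g_1(H)$ yields $g_1(H)\ge (v(H)-2)/e(H)$; combined with $p\le \delta n^{-g_1(H)}$ this produces $n^{v(H)}p^{e(H)}\le \delta^{e(H)}n^2$, which is at most $\delta' n^2$ provided $\delta\le (\delta')^{1/e(H)}$.

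The remaining, and most delicate, hypothesis is $p\ge (\delta' n)^{-1}$, i.e.\ $np\ge 1/\delta'$. This is where the assumptions $e(H)\ge 3$ and $K_{1,2}\subseteq H$ are essential: $K_{1,2}$ is a connected member of $\inter^c_H$ satisfying $2=e(K_{1,2})<e(H)$, so when $g_1(H)<g_2(H)$ holds strictly, Lemma~\ref{lem::m2balanced}(\ref{g1less}) yields $g_2(H)<1$; hence $np\ge\eps n^{1-g_2(H)}\to\infty$ and the hypothesis holds for all large enough $n$. (The borderline case $g_1(H)=g_2(H)$ can be handled by choosing $\delta$ small enough that the $p$-interval becomes empty.) With all three hypotheses in place, Lemma~\ref{lem::largeSparseFamilyBinomial} delivers a sparse $H$-family of size at least $\beta' f_H(n,p)$ with probability exceeding $1-\alpha'^{n^2p}$. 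I would then apply Lemma~\ref{lem::propf}(\ref{fg1}) with $c_0=1$, which is permissible since $p\le\delta n^{-g_1(H)}\le n^{-g_1(H)}$, obtaining $f_H(n,p)\ge n^{v(H)}p^{e(H)}$. Setting $\alpha=\alpha'$ and $\beta=\beta'$ then finishes the proof.

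The main obstacle is the bookkeeping required to match the constants between Lemmata~\ref{lem::largeSparseFamilyBinomial}, \ref{lem::propf} and the present statement, and in particular to verify the lower bound $np\ge 1/\delta'$ throughout the permitted range of $p$. The assumption $K_{1,2}\subseteq H$ is precisely what is needed here: it supplies the connected non-clique subgraph with two edges required to invoke Lemma~\ref{lem::m2balanced}(\ref{g1less}) and thereby force $g_2(H)<1$, so that $np\to\infty$ automatically.
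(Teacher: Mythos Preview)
Your proposal is correct and follows essentially the same route as the paper: reduce to the non-vacuous case $g_1(H)<g_2(H)$, use $K_{1,2}\in\inter^c_H$ with Lemma~\ref{lem::m2balanced}(\ref{g1less}) to get $g_2(H)<1$ (hence $np\to\infty$), use Lemma~\ref{lem::m2balanced}(\ref{g2m2}) for $g_2(H)=1/m_2(H)$, bound $f_H(n,p)$ above via $g_1(H)\ge (v(H)-2)/e(H)$, apply Lemma~\ref{lem::largeSparseFamilyBinomial}, and convert $f_H(n,p)$ to $n^{v(H)}p^{e(H)}$ via Lemma~\ref{lem::propf}(\ref{fg1}). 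The only cosmetic difference is that the paper takes its output $\delta$ equal to the $\delta$ supplied by Lemma~\ref{lem::largeSparseFamilyBinomial} (using $\delta^{e(H)}\le\delta$ since $\delta<1$), whereas you keep the two constants separate; and the paper observes $f_H(n,p)=n^{v(H)}p^{e(H)}$ exactly rather than just the inequality you need.
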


\begin{proof} 
If $g_2(H)\le g_1(H)$, then the assertion of the lemma trivially holds by taking $\delta < \eps$. Hence, for the remainder of the proof, we assume  that $g_1(H)< g_2(H)$. Since, moreover, $K_{1,2} \in \inter^c_H$, it follows by Lemma~\ref{lem::m2balanced}(\ref{g1less}) that $g_2(H) < 1$. Fix some $\eps>0$ and let $\alpha, \delta < 1$ be the constants whose existence is ensured by Lemma~\ref{lem::largeSparseFamilyBinomial}. Since $g_2(H) < 1$, it follows by the lower bound in~\eqref{pcond} that  
\begin{equation}\label{pbig}
p \geq \eps n^{-g_2(H)} > (\delta n)^{-1}
\end{equation}
for sufficiently large $n$.

Since $\delta < 1$, it follows by the upper bound in~\eqref{pcond} that
$$
p \leq \delta n^{-g_1(H)} \leq n^{-g_1(H)}.
$$

It then follows from Lemma~\ref{lem::propf}(\ref{fg1}) that $f_H(n,p)\ge n^{v(H)}p^{e(H)}$. Hence, by the definition of $f_H(n,p)$ we have 
\begin{equation}\label{HH0}
f_H(n,p)=n^{v(H)}p^{e(H)}.
\end{equation}

Furthermore, by the definition of $g_1(H)$, we have
$$
p \leq \delta n^{-g_1(H)} \leq \delta n^{-{(v(H)-2)/e(H)}},
$$
which by (\ref{HH0}) implies that
\begin{equation}\label{fup}
f_H(n,p) = n^{v(H)}p^{e(H)} \leq \delta n^2.
\end{equation}

Finally, observe that
\begin{equation}\label{pm2}
p \geq \eps n^{-g_2(H)} = \eps n^{-1/m_2(H)},
\end{equation}
where the last equality holds by Lemma~\ref{lem::m2balanced}(\ref{g2m2}).

Combining (\ref{pbig}), (\ref{fup}) and (\ref{pm2}) shows that all the conditions of Lemma~\ref{lem::largeSparseFamilyBinomial} are satisfied. Since $f_H(n,p) = n^{v(H)}p^{e(H)}$, this concludes the proof of the lemma.  
\end{proof}

\begin{lemma} \label{lem::manyHgnp2}
Suppose that $H$ is an $m_2$-balanced graph which is not a forest and let $\varepsilon > 0$ and $C \geq 1$ be constants. Then there exist positive constants $\alpha < 1$ and $\beta$ such that the following holds. For every $n$ and $p$ such that
\begin{equation}\label{pcond2}
\eps n^{-1/m_2(H)}\le p \le C n^{-1/m_2(H)},
\end{equation}
the probability that ${\mathbb G}(H,n,p)$ contains a sparse $H$-family with at least $\beta n^{v(H)}p^{e(H)}$ copies of $H$ is greater than $1 - \alpha^{n^2 p}$.
\end{lemma}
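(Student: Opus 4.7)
The plan is to mirror the proof of Lemma~\ref{lem::manyHgnp}: verify the three hypotheses of Lemma~\ref{lem::largeSparseFamilyBinomial} and then observe that, under the current assumptions, $f_H(n,p)$ has the same order of magnitude as $n^{v(H)} p^{e(H)}$.

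First I would show that $m_2(H)>1$. Since $H$ is not a forest, it contains some cycle $C$, and
$$
m_2(H)\;\geq\; \frac{e(C)-1}{v(C)-2}\;=\;\frac{v(C)-1}{v(C)-2}\;>\;1.
$$
In particular $1/m_2(H)<1$, so the hypothesis $p \geq \eps n^{-1/m_2(H)}$ forces $p \geq (\delta n)^{-1}$ for every sufficiently small $\delta$ and all sufficiently large $n$. This also supplies the condition $p \geq \eps n^{-1/m_2(H)}$ required in Lemma~\ref{lem::largeSparseFamilyBinomial}. Note also that $v(H)\geq 3$ since $H$ contains a cycle.

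Next I would sandwich $f_H(n,p)$. Since $H\in\inter^c_H$ (the family $\inter_H$ excludes $H$ itself by definition), we have $f_H(n,p)\leq n^{v(H)}p^{e(H)}$. Using $m_2(H)=(e(H)-1)/(v(H)-2)$ together with the hypothesis $p\leq Cn^{-1/m_2(H)}$, a direct calculation yields
$$
n^{v(H)}p^{e(H)}\;\leq\; C^{e(H)}\,n^{(2e(H)-v(H))/(e(H)-1)}.
$$
Since $v(H)\geq 3$ the exponent is strictly less than $2$, so $f_H(n,p)=o(n^2)$ and in particular $f_H(n,p)\leq \delta n^2$ for all sufficiently large $n$. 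For the matching lower bound, Lemma~\ref{lem::propf}(\ref{fm2}) applied with $c_0=1/C\leq 1$ (which is admissible since $C\geq 1$) gives $f_H(n,p)\geq C^{-e(H)}n^{v(H)}p^{e(H)}$.

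With all three hypotheses of Lemma~\ref{lem::largeSparseFamilyBinomial} verified, that lemma provides constants $\alpha<1$ and $\beta'>0$ such that, with probability greater than $1-\alpha^{n^2 p}$, the graph ${\mathbb G}(H,n,p)$ contains a sparse $H$-family of size at least $\beta' f_H(n,p)\geq \beta' C^{-e(H)}n^{v(H)}p^{e(H)}$. Setting $\beta:=\beta' C^{-e(H)}$ concludes the proof. The argument is essentially bookkeeping on top of earlier lemmas; the only real subtlety is exploiting $m_2$-balancedness \emph{together} with the fact that $H$ is not a forest, which simultaneously yields $m_2(H)>1$ (needed for $p\gg 1/n$ and hence a meaningful tail bound $\alpha^{n^2p}$) and $v(H)\geq 3$ (needed for $f_H(n,p)=o(n^2)$).
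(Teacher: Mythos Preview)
Your proposal is correct and follows essentially the same approach as the paper: both verify the three hypotheses of Lemma~\ref{lem::largeSparseFamilyBinomial} (using $1/m_2(H)<1$ from the cycle in $H$ for $p\geq(\delta n)^{-1}$, Lemma~\ref{lem::propf}(\ref{fm2}) for the lower bound on $f_H(n,p)$, and a direct computation for $f_H(n,p)\leq \delta n^2$), then conclude via $f_H(n,p)\geq C^{-e(H)}n^{v(H)}p^{e(H)}$. The only cosmetic difference is that the paper bounds $p\leq Cn^{-(v(H)-2)/(e(H)-1)}<\delta n^{-(v(H)-2)/e(H)}$ directly, whereas you compute the exponent $(2e(H)-v(H))/(e(H)-1)$ explicitly; these are equivalent.
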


\begin{proof}
Fix some $\eps > 0$ and let $\alpha, \delta < 1$ be the constants whose existence is ensured by Lemma~\ref{lem::largeSparseFamilyBinomial} (for this $\varepsilon$). Note that $1/m_2(H) < 1$ since $H$ contains a cycle. Thus  
\begin{equation}\label{pbig2}
p \geq \eps n^{-1/m_2(H)} > (\delta n)^{-1}
\end{equation}
for sufficiently large $n$.

Since $H$ is $m_2$-balanced and $p \leq C n^{-1/m_2(H)}$, it follows by Lemma~\ref{lem::propf}(\ref{fm2}) that 
\begin{equation}\label{HH02}
f_H(n,p) \ge C^{- e(H)} n^{v(H)} p^{e(H)}.
\end{equation}

Furthermore
$$
p \leq C n^{-1/m_2(H)} = C n^{-(v(H)-2)/(e(H)-1)} < \delta n^{-(v(H)-2)/e(H)},
$$
where the last inequality holds for sufficiently large $n$. This implies that $n^{v(H)}p^{e(H)}\le \delta n^2$ and by the definition of $f_H(n,p)$ we conclude that
\begin{equation}\label{fup2}
f_H(n,p)\le n^{v(H)}p^{e(H)}\le \delta n^2.
\end{equation}

Combining (\ref{pcond2}), (\ref{pbig2}) and (\ref{fup2}) shows that all the conditions of Lemma~\ref{lem::largeSparseFamilyBinomial} are satisfied. Since $f_H(n,p) \geq C^{- e(H)} n^{v(H)} p^{e(H)}$, this concludes the proof of the lemma.
\end{proof}

Recall that we are actually interested in properties of ${\mathbb G}(H,n,M)$ rather than ${\mathbb G}(H,n,p)$. Hence, we end this section by stating the ${\mathbb G}(H,n,M)$ analogues of Lemmata~\ref{lem::manyHgnp} and \ref{lem::manyHgnp2}. 

\begin{lemma} \label{lem::manyHgnm}
For every $\varepsilon > 0$ and every graph $H$ such that $e(H) \geq 3$ and $K_{1,2} \subseteq H$, there exist positive constants $\alpha < 1$, $\beta$ and $\delta < 1$ such that the following holds. For every $n$ and $M$ such that
$$
\eps n^{2-g_2(H)}\le M \le \delta n^{2-g_1(H)},
$$ 
the probability that ${\mathbb G}(H,n,M)$ contains a sparse $H$-family with at least $\beta \E(X_H)$ copies of $H$ is greater than $1 - \alpha^{M}$.
\end{lemma}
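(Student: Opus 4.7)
The plan is to deduce Lemma \ref{lem::manyHgnm} from its binomial analogue, Lemma \ref{lem::manyHgnp}, via the standard transfer between the $\mathbb{G}(H,n,p)$ and $\mathbb{G}(H,n,M)$ models. This mirrors exactly the relation between Lemma \ref{lem::largeSparseFamily} and Lemma \ref{lem::largeSparseFamilyBinomial} alluded to right after the statement of the former.

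Set $p := M/(e(H) n^2)$, so that $\mathbb{E}(e(\mathbb{G}(H,n,p))) = e(H) n^2 p = M$ and $n^2 p = M/e(H)$. First I would verify that the hypotheses of Lemma \ref{lem::manyHgnp} are satisfied with this $p$: from the assumption $\eps n^{2-g_2(H)} \leq M \leq \delta n^{2-g_1(H)}$ one immediately reads off
\[
(\eps/e(H))\, n^{-g_2(H)} \leq p \leq (\delta/e(H))\, n^{-g_1(H)},
\]
which is the hypothesis of Lemma \ref{lem::manyHgnp} with $\eps, \delta$ replaced by suitable rescaled constants. Applying that lemma yields positive constants $\alpha_0 < 1$ and $\beta_0$ such that, with probability greater than $1 - \alpha_0^{n^2 p}$, $\mathbb{G}(H,n,p)$ contains a sparse $H$-family of size at least $\beta_0 n^{v(H)} p^{e(H)}$.

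Next I would transfer this statement to $\mathbb{G}(H,n,M)$ via Pittel's inequality (JLR, (1.6), page 17). The property ``$\mathbb{G}$ contains a sparse $H$-family of size at least $k$'' is monotone (enlarging the graph only adds more canonical copies of $H$ and preserves pairwise intersections of any already-existing family), so Pittel's inequality applies and inflates the failure probability by at most a polynomial factor in $M$. Since $n^2 p = M/e(H)$, we have $\alpha_0^{n^2 p} = (\alpha_0^{1/e(H)})^M$, and the polynomial factor can be absorbed into a marginally larger constant $\alpha < 1$, so that the failure probability in $\mathbb{G}(H,n,M)$ is at most $\alpha^M$ for all sufficiently large $n$ (the lower bound $M \geq \eps n^{2-g_2(H)}$ together with $g_2(H) < 1$ guarantees $M \to \infty$, ensuring the polynomial factor is dominated).

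Finally, to rephrase the size bound in terms of $\mathbb{E}(X_H)$, I would invoke the fact (quoted in the preliminaries of Section~\ref{sec::probTools}) that $\mathbb{E}(X_H) = \Theta(\mathbb{E}(Y_H)) = \Theta(n^{v(H)} p^{e(H)})$ when $p = M/(e(H) n^2)$. This rewrites $\beta_0 n^{v(H)} p^{e(H)}$ as $\beta \cdot \mathbb{E}(X_H)$ for some $\beta > 0$ depending only on $H$ and $\eps$, completing the proof. The only even mildly delicate step is checking that Pittel's polynomial prefactor is indeed harmless, which reduces to observing that the lower bound on $M$ together with $g_2(H) = 1/m_2(H) < 1$ (since $K_{1,2} \subseteq H$ forces $m_2(H) \geq 1$, and the presence of at least three edges plus this containment keeps $g_2(H)$ bounded away from $1$ in the relevant regime) makes $M$ large enough.
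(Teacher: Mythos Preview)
Your proposal is correct and is exactly the approach the paper intends: the paper gives no explicit proof of Lemma~\ref{lem::manyHgnm}, merely stating it as the ${\mathbb G}(H,n,M)$ analogue of Lemma~\ref{lem::manyHgnp} and relying on the transfer via Pittel's inequality announced in the preliminaries of Section~\ref{sec::probTools}. One small sharpening: your justification that $g_2(H)<1$ should invoke Lemma~\ref{lem::m2balanced}(\ref{g1less}) (using that $K_{1,2}\in\inter^c_H$ and $g_1(H)<g_2(H)$ in the non-vacuous case) rather than the slightly vague appeal to $m_2(H)\ge 1$, and note that Pittel's inequality as quoted in~\cite{JLR} applies to arbitrary events, so monotonicity is not actually needed.
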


\begin{lemma} \label{lem::manyHgnm2}
Suppose that $H$ is an $m_2$-balanced graph which is not a forest and let $\varepsilon > 0$ and $C \geq 1$ be constants. Then there exist positive constants $\alpha < 1$ and $\beta$ such that the following holds. For every $n$ and $M$ such that
$$
\eps n^{2 - 1/m_2(H)} \leq M \leq C n^{2 - 1/m_2(H)},
$$ 
the probability that ${\mathbb G}(H,n,M)$ contains a sparse $H$-family with at least $\beta \E(X_H)$ copies of $H$ is greater than $1 - \alpha^{M}$.
\end{lemma}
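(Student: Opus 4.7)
The plan is to derive the lemma from its binomial counterpart Lemma~\ref{lem::manyHgnp2} via Pittel's inequality, exactly as the authors have flagged earlier in this section. First, set $p = M/(e(H)n^2)$. The hypothesis $\eps n^{2-1/m_2(H)} \le M \le C n^{2-1/m_2(H)}$ becomes $\eps' n^{-1/m_2(H)} \le p \le C' n^{-1/m_2(H)}$ after dividing by $e(H)n^2$, so Lemma~\ref{lem::manyHgnp2} applies and yields constants $\alpha_0 < 1$ and $\beta_0 > 0$ such that, with probability greater than $1 - \alpha_0^{n^2 p}$, the graph ${\mathbb G}(H,n,p)$ contains a sparse $H$-family of size at least $\beta_0 n^{v(H)} p^{e(H)} = \beta_0 \E(Y_H)$.

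To convert this lower bound to one in terms of $\E(X_H)$ I invoke the fact, recorded earlier in this section, that for $p = M/(e(H)n^2)$ one has $\E(X_H) \le \E(Y_H) \le 2\E(X_H)$. Setting $\beta := \beta_0$, this means the event
$$
\mathcal P = \bigl\{\text{the host graph contains a sparse $H$-family with at least } \beta \E(X_H) \text{ copies of } H\bigr\}
$$
holds in ${\mathbb G}(H,n,p)$ with probability at least $1 - \alpha_0^{n^2 p}$.

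The key observation needed to transfer this to ${\mathbb G}(H,n,M)$ is that $\mathcal P$ is a monotone increasing graph property: a sparse $H$-family, as defined, has both of its conditions internal to the listed copies (every member is a canonical copy of $H$, and pairwise intersections lie in $\inter_H$ or are empty), and so is preserved when edges are added to the host graph. Hence $\mathcal P^c$ is monotone decreasing, and Pittel's inequality (\cite{JLR}, inequality (1.6) on page~17) produces an absolute constant $K > 0$ for which
$$
\Pr\bigl[{\mathbb G}(H,n,M) \notin \mathcal P\bigr] \le K\sqrt{M}\,\Pr\bigl[{\mathbb G}(H,n,p) \notin \mathcal P\bigr] \le K\sqrt{M}\,\alpha_0^{n^2 p} = K\sqrt{M}\,\alpha_0^{M/e(H)}.
$$

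To finish, I note that because $H$ is not a forest, $1/m_2(H) < 1$, so $M \ge \eps n^{2-1/m_2(H)} \to \infty$ with $n$. Picking any $\alpha \in (\alpha_0^{1/e(H)},\,1)$ therefore gives $K\sqrt{M}\,\alpha_0^{M/e(H)} \le \alpha^M$ for all sufficiently large $n$, which is the desired tail bound. The only step that is not pure bookkeeping is the monotonicity verification; I expect this to be the main (mild) obstacle, because one must argue that neither defining condition of a sparse $H$-family can be broken by edge additions outside the family, but it is immediate once one recognises that the conditions refer only to the members of the family themselves.
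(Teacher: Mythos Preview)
Your argument is correct and is precisely the transfer the paper intends: the authors do not give a separate proof of this lemma but simply state it as the ${\mathbb G}(H,n,M)$ analogue of Lemma~\ref{lem::manyHgnp2}, having announced earlier that all such transfers go through Pittel's inequality. One small remark: the monotonicity check, while correct, is not actually needed, since Pittel's inequality $\Pr[{\mathbb G}(H,n,M)\in Q]\le 3\sqrt{M}\,\Pr[{\mathbb G}(H,n,p)\in Q]$ holds for an arbitrary property $Q$ (it follows immediately from $\Pr[{\mathbb G}(H,n,p)\in Q]\ge \Pr[{\mathbb G}(H,n,M)\in Q]\cdot\Pr[\mathrm{Bin}(e(H)n^2,p)=M]$ and a Stirling estimate).
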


\section{Winning criteria for the $H$-game} \label{sec::eveyH}

In this section we state and prove three useful corollaries of the results proven in previous sections. Each of these corollaries provides a different sufficient condition for Waiter to force Client to build many copies of $H$ in a $(b:1)$ Waiter-Client game on the edge set of ${\mathbb B}_H(V_1, \ldots, V_{v(H)};s)$. In particular, Theorem~\ref{th::largeb} will readily follow from Theorem~\ref{th::BESH}(i) and the first result of this section. 

\begin{corollary} \label{cor::denseH}
Let $H$ be a graph with at least two edges. Then there exist positive constants $c$ and $\beta$ such that for every sufficiently large integer $s$ the following holds. If  
$$
2 \leq b+1 \leq c s^{1/m_2(H)} \,,
$$
then, playing a $(b:1)$ Waiter-Client game on the edge set of ${\mathbb B}_H(V_1,\ldots,V_{v(H)};s)$, Waiter has a strategy to force Client to build at least $\beta s^{v(H)}(b+1)^{-e(H)}$ copies of $H$.
\end{corollary}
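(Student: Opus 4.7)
The plan is to combine Theorem~\ref{th::smallFamily} (the Big Family Theorem) with the concentration estimate of Lemma~\ref{lem::denseM}. The idea is to take $\mathcal{H}$ to be the family of $M$-subsets of the edge set of $\mathbb{B}_H(V_1,\ldots,V_{v(H)};s)$ that already contain many canonical copies of $H$, show via the random graph concentration that almost every $M$-subset lies in $\mathcal{H}$, and then invoke the Big Family Theorem to produce a Waiter strategy that forces Client to end up with such a subset. The number $M$ of rounds is chosen so that Client's edge-density after $M$ rounds matches $1/(b+1)$, which is precisely the density at which the expected number of canonical copies of $H$ in a random $M$-subset is of the desired order $\Theta\bigl(s^{v(H)}(b+1)^{-e(H)}\bigr)$.

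More concretely, I would write $N := e(H)\,s^2$ for the total number of edges of the blow-up, fix a small constant $\kappa \in (0,1)$ to be chosen later, and set $M := \lfloor \kappa N/(b+1)\rfloor$. Provided $c$ is sufficiently small in terms of $\kappa$, the hypothesis $b+1 \le c\,s^{1/m_2(H)}$ guarantees $p := M/N \approx \kappa/(b+1) \ge s^{-1/m_2(H)}$. Lemma~\ref{lem::propf}(\ref{hatdense}) then gives $\hat{f}_H(s,p) = s^2 p = \Theta(M)$, and the equivalence between ${\mathbb G}(H,s,p)$ and ${\mathbb G}(H,s,M)$ recorded in Section~\ref{sec::probTools} transfers this to $\hat{f}_H(s,M) = \Theta(M)$. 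Lemma~\ref{lem::denseM} thus asserts that a uniformly random $M$-subset of $E(\mathbb{B}_H(V_1,\ldots,V_{v(H)};s))$ fails to contain at least $\E(X_H)/2$ canonical copies of $H$ with probability at most $\exp(-c_1 M)$ for some constant $c_1 = c_1(H) > 0$, and a direct computation gives $\E(X_H)/2 = \Theta\bigl(s^{v(H)}(b+1)^{-e(H)}\bigr)$.

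Now let $\mathcal{H}$ be the family of those ``good'' $M$-subsets and set $\alpha := e^{-c_1}$, so that $|\mathcal{H}| \ge (1-\alpha^M)\binom{N}{M}$. The remaining numerical condition of Theorem~\ref{th::smallFamily}, namely $b+1 \le (1-\alpha)N/M$, reduces to the inequality $\kappa \le 1-\alpha$, which is met by taking $\kappa$ sufficiently small. The Big Family Theorem then furnishes a strategy by which, after the first $M$ rounds, Client is left holding an $M$-element subset of $E(\mathbb{B}_H(V_1,\ldots,V_{v(H)};s))$ belonging to $\mathcal{H}$. That subset contains at least $\beta\,s^{v(H)}(b+1)^{-e(H)}$ canonical copies of $H$, and hence at least that many copies of $H$, for some constant $\beta > 0$ depending only on $H$.

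The main technical hurdle is the simultaneous choice of the four constants $c$, $\kappa$, $\alpha$ and $c_1$: one needs $\alpha = e^{-c_1}$, $\kappa \le 1-\alpha$, and $c \le \kappa$, which is achieved by fixing $\kappa$ first (small enough), then $\alpha$, and finally $c$. A secondary technicality is the hypothesis $M = \omega(1)$ in Lemma~\ref{lem::denseM}; this is automatic whenever $m_2(H) > 1/2$, since then $M \ge (\kappa/c)\,s^{2-1/m_2(H)} \to \infty$ as $s \to \infty$. The boundary case $m_2(H) = 1/2$, which corresponds to $H$ being a matching, can be handled by a straightforward direct argument in which Waiter plays independently on each edge-disjoint complete bipartite component of the blow-up.
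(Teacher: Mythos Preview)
Your proposal is correct and follows essentially the same route as the paper: handle the matching case by a direct argument, and for $H$ containing a $K_{1,2}$ set $M \approx \kappa N/(b+1)$, use Lemma~\ref{lem::propf}(\ref{hatdense}) together with Lemma~\ref{lem::denseM} to show that all but an $\alpha^M$-fraction of $M$-subsets contain $\Theta\bigl(s^{v(H)}(b+1)^{-e(H)}\bigr)$ canonical copies of $H$, and then conclude via Theorem~\ref{th::smallFamily}. The only slip is in the order in which you fix the constants: since $c_1$ (and hence $\alpha = e^{-c_1}$) is determined by $H$ through Lemma~\ref{lem::denseM}, you must choose $\kappa \le 1-\alpha$ \emph{after} $\alpha$, not before---the paper sidesteps this by simply taking $\kappa = 1-\alpha$ from the outset and writing $M = \lfloor (1-\alpha)N/(b+1)\rfloor$.
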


\begin{proof}
Without loss of generality we can clearly assume that $H$ has no isolated vertices. If $H$ is a matching of size $k \geq 2$, then $b = O(s^2)$ by assumption. Suppose that $E(H) = \{x_i y_i : 1 \leq i \leq k\}$ and let $X_1, Y_1, \ldots, X_k, Y_k$ be the corresponding vertex sets in ${\mathbb B}_H(V_1,\ldots,V_{v(H)};s)$. For every $1 \leq i \leq k$, by offering only the edges of $E(X_i, Y_i)$, Waiter forces Client to claim $\Theta(s^2/b)$ of these edges. This yields $\Theta(s^{2k}/b^k)$ copies of $H$ in Client's graph. In the remainder of this proof we therefore assume that $H$ contains two adjacent edges.

For positive constants $\alpha < 1$ and $c < (1 - \alpha)/2$, let $2 \leq b+1 \leq c s^{1/m_2(H)}$, $N = e(H)s^2$, $M = M(s) = \lfloor (1-\alpha)N/(b+1) \rfloor$ and $p = p(s) = M/N$. Since $H$ is not a matching, $m_2(H) \geq 1$ and thus $b+1 \leq cs$. In particular 
\begin{equation} \label{eq::forLemmaDenseM}
M=\omega(1)\quad\text{and}\quad M \leq e(H) s^2/2 \,.
\end{equation}

A simple calculation shows that 
$$
\hat{f}_H(s,M) \geq \gamma \hat{f}_H(s,p)
$$
holds for some constant $\gamma > 0$ (which depends on $H$).

Moreover
$$
p = \frac{M}{N} \geq \frac{1-\alpha}{2(b+1)}\ge \frac{1-\alpha}{2c s^{1/m_2(H)}}\ge \frac{1}{s^{1/m_2(H)}}\,,
$$
where the last inequality holds since $c < (1-\alpha)/2$.

Hence, we can apply Lemma~\ref{lem::propf}(\ref{hatdense}) and obtain
\begin{equation*} 
\hat{f}_H(s,p) = s^2 p = \frac{M}{e(H)} \,.
\end{equation*}

Observe that the expected number of canonical copies of $H$ in ${\mathbb G}(H,s,M)$ is at least $2\delta s^{v(H)}p^{e(H)}$ for some positive constant $\delta$ (which depends on $H$). Let $\mathcal{G}$ denote the family of all subgraphs of ${\mathbb B}_H(V_1, \ldots, V_{v(H)};s)$ with precisely $M$ edges and let $\mathcal{F}$ denote the family of all graphs in $\mathcal{G}$ each containing at least $\delta s^{v(H)} p^{e(H)}$ copies of $H$. Since $\delta s^{v(H)} p^{e(H)} \leq {\mathbb E}(X_H)/2$ and~\eqref{eq::forLemmaDenseM} hold, the conditions of Lemma~\ref{lem::denseM} are satisfied. Therefore, there exists a positive constants $c'$ such that 
\begin{eqnarray}
|\mathcal{F}| &\geq& 
\left(1 - \exp(-c' \hat{f}_H(s,M))\right) |\mathcal{G}| \nonumber \\
&\geq& \left(1 - \exp(-c'\gamma \hat{f}_H(s,p))\right) |\mathcal{G}| \nonumber \\
&=& \left(1 - \exp(-c'\gamma M/e(H))\right) |\mathcal{G}| \nonumber \\
&>& \left(1 - \alpha^M \right) |\mathcal{G}|,
\end{eqnarray}
provided that $\alpha>\exp(-c'\gamma/e(H))$.

Let $X = E({\mathbb B}_H(V_1, \ldots, V_{v(H)};s))$. Then $N = |X|$, $\mathcal{F}$ is $M$-uniform, $|\mathcal{F}| > (1 - \alpha^M)|\mathcal{G}|$ and $b+1 \leq (1 - \alpha) N/M$. It thus follows by Theorem~\ref{th::smallFamily} that, playing the $(b:1)$ Waiter-Client game $(X, {\mathcal F})$, Waiter has a strategy to force Client to fully claim some $A \in {\mathcal F}$ during the first $M$ rounds of the game. Hence, after $M$ rounds, the subgraph of ${\mathbb B}_H(V_1, \ldots, V_{v(H)};s)$ built by Client contains at least $\delta s^{v(H)}p^{e(H)} \geq \beta s^{v(H)}(b+1)^{-e(H)}$ copies of $H$, for some constant $\beta > 0$.   
\end{proof}

\begin{corollary} \label{cor::manyHnew}
Let $H$ be a graph such that $e(H) \geq 3$ and $K_{1,2} \subseteq H$. Then there exist positive constants $c_1$, $c_2$ and $\beta'$ such that for every sufficiently large integer $s$ the following holds. Suppose that $b = b(s)$  is a positive integer satisfying  
$$
c_1 s^{g_1(H)} \leq b+1 \leq c_2 s^{g_2(H)}.
$$
Then, playing a $(b:1)$ Waiter-Client game on the edge set of ${\mathbb B}_H(V_1,\ldots,V_{v(H)};s)$, Waiter has a strategy to force Client to build a sparse $H$-family consisting of at least $\beta' s^{v(H)}(b+1)^{-e(H)}$ copies of $H$.
\end{corollary}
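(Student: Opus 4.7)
The plan is to mirror the proof of Corollary~\ref{cor::denseH}, but to invoke Lemma~\ref{lem::manyHgnm} in place of Lemma~\ref{lem::denseM}, since the former already produces not merely many copies of $H$ but an entire sparse $H$-family of them. Fix $\eps > 0$ and let $\alpha_0 < 1$, $\beta$, $\delta_0 < 1$ be the constants guaranteed by Lemma~\ref{lem::manyHgnm} for this $\eps$ and our graph $H$. Choose $\alpha \in [\alpha_0, 1)$ once and for all, and set $N = e(H) s^2$, $M = M(s) = \lfloor (1-\alpha) N / (b+1) \rfloor$ and $p = M/N$. Two conditions then follow automatically: the bias bound $b + 1 \leq (1-\alpha) N/M$ required by Theorem~\ref{th::smallFamily}, and the estimate $p = \Theta((b+1)^{-1})$.

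Next I would calibrate $c_1$ and $c_2$ to match the range on $M$ required by Lemma~\ref{lem::manyHgnm}. Since $M$ is of order $N/(b+1)$, choosing $c_2$ sufficiently small ensures the lower bound $M \geq \eps s^{2 - g_2(H)}$, while choosing $c_1$ sufficiently large forces the upper bound $M \leq \delta_0 s^{2 - g_1(H)}$. Lemma~\ref{lem::manyHgnm} then tells us that ${\mathbb G}(H, s, M)$ contains a sparse $H$-family of size at least $\beta \E(X_H)$ with probability at least $1 - \alpha_0^M \geq 1 - \alpha^M$.

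Let $\mathcal{F}$ denote the family of all subgraphs of ${\mathbb B}_H(V_1, \ldots, V_{v(H)}; s)$ with exactly $M$ edges that contain a sparse $H$-family with at least $\beta \E(X_H)$ copies of $H$. Then $\mathcal{F}$ is $M$-uniform and $|\mathcal{F}| \geq (1 - \alpha^M) \binom{N}{M}$, so applying Theorem~\ref{th::smallFamily} with board $X = E({\mathbb B}_H(V_1, \ldots, V_{v(H)}; s))$ and winning sets $\mathcal{F}$ yields a Waiter strategy that forces Client to fully claim some $A \in \mathcal{F}$ within the first $M$ rounds. Client's graph then contains a sparse $H$-family of the required size. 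A short computation using $\E(X_H) = \Theta(s^{v(H)} p^{e(H)})$ together with $p = \Theta((b+1)^{-1})$ converts $\beta \E(X_H)$ into the stated bound $\beta' s^{v(H)} (b+1)^{-e(H)}$ for a suitable $\beta' > 0$.

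The only genuinely technical point is the joint calibration of $c_1$, $c_2$ and $\alpha$ so that the range hypothesis of Lemma~\ref{lem::manyHgnm}, the bias bound required by Theorem~\ref{th::smallFamily}, and the domination $\alpha \geq \alpha_0$ hold simultaneously; this is a routine tuning of constants. All the probabilistic heavy lifting --- the existence inside the random model of a sparse $H$-family of the correct order of magnitude --- is already packaged into Lemma~\ref{lem::manyHgnm}, so no new probabilistic ingredient beyond what Section~\ref{sec::probTools} supplies is needed.
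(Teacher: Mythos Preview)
Your proposal is correct and follows essentially the same approach as the paper: set $M = \lfloor (1-\alpha)\, e(H)\, s^2/(b+1) \rfloor$, calibrate $c_1, c_2$ so that Lemma~\ref{lem::manyHgnm} applies to ${\mathbb G}(H,s,M)$, and feed the resulting high-probability event into Theorem~\ref{th::smallFamily}. The paper additionally disposes explicitly of the vacuous case $g_1(H) \geq g_2(H)$ and records (via Lemma~\ref{lem::m2balanced}(\ref{g1less})) that $g_2(H) < 1$, whence $b = o(s)$ and $M = \omega(1)$, but these are precisely the routine details you allude to.
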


\begin{proof}
If $g_1(H) \geq g_2(H)$, then our claim trivially follows by taking $c_2 < c_1$. We can thus assume that $g_1(H) < g_2(H)$ which, by Lemma~\ref{lem::m2balanced}(\ref{g1less}), implies that $g_2(H) < 1$. Let $\alpha < 1$, $\beta$ and $\delta$ be the constants whose existence follows from Lemma~\ref{lem::manyHgnm}, when applied with $\eps=1$. Fix
$$
c_1=(1-\alpha)e(H)/\delta\quad\text{and}\quad c_2=(1-\alpha)e(H)/2
$$
Let $M = \lfloor (1-\alpha)e(H)s^2/(b+1) \rfloor$. Since $g_2(H) < 1$, it follows that $b = o(s)$ and $M = \omega(1)$. Furthermore 
$$
M>\frac{1-\alpha}{2(b+1)}\,e(H)\,s^2\ge \frac{1-\alpha}{2c_2}\,e(H)\,s^{2-g_2(H)}= s^{2-g_2(H)}
$$
and
$$
M\le\frac{1-\alpha}{b+1}\,e(H)\,s^2\le \frac{1-\alpha}{c_1}\,e(H)\,s^{2-g_1(H)}=\delta s^{2-g_1(H)}.
$$ 
Thus, by Lemma~\ref{lem::manyHgnm}, the probability that ${\mathbb G}(H,n,M)$ contains a sparse $H$-family with at least $\beta \E(X_H)$ copies of $H$ is greater than $1 - \alpha^{M}$. 

Let $\mathcal{G}$ denote the family of all subgraphs of ${\mathbb B}_H(V_1, \ldots, V_{v(H)};s)$ with precisely $M$ edges and let $\mathcal{H}$ be the family of edge sets of all graphs $G \in \mathcal{G}$ which contain a sparse $H$-family consisting of at least $\beta \E(X_H)$ copies of $H$. By the argument above, we have $|\mathcal{H}|>(1 - \alpha^M) |\mathcal{G}|$. 

Let $X = E({\mathbb B}_H(V_1, \ldots, V_{v(H)};s))$ and $N = e(H) s^2$. It is easy to see that $b+1 \leq (1 - \alpha) N/M$. Hence, applying Theorem~\ref{th::smallFamily} to the family ${\mathcal H}$, shows that, playing a $(b:1)$ Waiter-Client game on the edge set of ${\mathbb B}_H(V_1, \ldots, V_{v(H)};s)$, Waiter can force Client to build a sparse $H$-family consisting of at least $\beta \E(X_H) \geq \beta' s^{v(H)} (b+1)^{-e(H)}$ copies of $H$. 
\end{proof}

\begin{corollary} \label{cor::manyHnew2}
Let $H$ be an $m_2$-balanced graph which is not a forest. Then there exist positive constants $c_1 < c_2$ and $\beta'$ such that for every sufficiently large integer $s$ the following holds. Suppose that $b = b(s)$ is a positive integer satisfying  
$$
c_1 s^{1/m_2(H)} \leq b+1 \leq c_2 s^{1/m_2(H)}.
$$ 
Then, playing a $(b:1)$ Waiter-Client game on the edge set of ${\mathbb B}_H(V_1,\ldots,V_{v(H)};s)$, Waiter has a strategy to force Client to build a sparse $H$-family consisting of at least $\beta' s^{v(H)}(b+1)^{-e(H)}$ copies of $H$.
\end{corollary}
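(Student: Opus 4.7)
The plan is to mirror the proof of Corollary~\ref{cor::manyHnew}, with Lemma~\ref{lem::manyHgnm2} playing the role of Lemma~\ref{lem::manyHgnm}. The argument is in fact slightly simpler here, because Lemma~\ref{lem::manyHgnm2} constrains $M$ to a single order $s^{2-1/m_2(H)}$ rather than to two distinct orders $s^{2-g_1(H)}$ and $s^{2-g_2(H)}$, and this matches our two-sided hypothesis on $b+1$ directly.

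First I would observe that, since $H$ is not a forest, it contains a cycle $C_k$ with $k\ge 3$, giving $m_2(H)\ge (k-1)/(k-2)>1$, and hence $1/m_2(H)<1$. In particular, whenever $b+1=\Theta(s^{1/m_2(H)})$ we have $b=o(s)$, so the quantity $M$ chosen below will satisfy $M=\omega(1)$ and $M\le e(H)s^2/2$ for sufficiently large $s$. Fix $\varepsilon=1$ and a constant $C>2$, and let $\alpha<1$ and $\beta>0$ be the constants supplied by Lemma~\ref{lem::manyHgnm2} for these $\varepsilon$, $C$ and our graph $H$. Then set
\[
c_1 \;=\; \frac{(1-\alpha)e(H)}{C}, \qquad c_2 \;=\; \frac{(1-\alpha)e(H)}{2},
\]
so that $c_1<c_2$, and for every $b$ with $c_1 s^{1/m_2(H)}\le b+1\le c_2 s^{1/m_2(H)}$ define $M=\lfloor (1-\alpha)e(H)s^2/(b+1)\rfloor$. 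A direct calculation, exactly parallel to the one appearing in the proof of Corollary~\ref{cor::manyHnew}, yields
\[
\varepsilon s^{2-1/m_2(H)} \;\le\; M \;\le\; C s^{2-1/m_2(H)},
\]
so Lemma~\ref{lem::manyHgnm2} guarantees that ${\mathbb G}(H,s,M)$ contains a sparse $H$-family of size at least $\beta\,\E(X_H)$ with probability greater than $1-\alpha^M$.

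Finally, I would invoke the Big Family Theorem (Theorem~\ref{th::smallFamily}) to turn this probabilistic statement into a Waiter strategy. Taking $X=E({\mathbb B}_H(V_1,\ldots,V_{v(H)};s))$, $N=e(H)s^2$, and $\mathcal{H}$ to be the $M$-uniform family of edge sets of those $M$-edge subgraphs of the blow-up which admit a sparse $H$-family of size at least $\beta\,\E(X_H)$, the definition of $M$ gives $b+1\le (1-\alpha)N/M$ while the previous paragraph gives $|\mathcal{H}|>(1-\alpha^M)\binom{N}{M}$. Theorem~\ref{th::smallFamily} therefore supplies a Waiter strategy which forces Client, within the first $M$ rounds, to claim all edges of some graph in $\mathcal{H}$. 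Since $p:=M/N=\Theta((b+1)^{-1})$, we have $\E(X_H)=\Theta(s^{v(H)}p^{e(H)})=\Theta(s^{v(H)}(b+1)^{-e(H)})$, yielding the required lower bound after choosing $\beta'$ appropriately.

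I do not foresee any real obstacle. The only delicate point is fixing $c_1$ and $c_2$ in terms of the $\alpha$ returned by Lemma~\ref{lem::manyHgnm2}, which is why one must first fix $\varepsilon$ and $C$ (thereby determining $\alpha$) and only then define the $c_i$; the condition $C>2$ ensures $c_1<c_2$, and the condition $\varepsilon\le 1$ is what makes the lower bound on $M$ work out.
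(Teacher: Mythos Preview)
Your proposal is correct and follows essentially the same approach as the paper's proof: invoke Lemma~\ref{lem::manyHgnm2} to get the large sparse $H$-family with probability $>1-\alpha^M$, then apply Theorem~\ref{th::smallFamily}. The only cosmetic difference is the choice of parameters---the paper takes $\varepsilon=1/4$, $C=1$ and sets $c_1=(1-\alpha)e(H)$, $c_2=2(1-\alpha)e(H)$, whereas you take $\varepsilon=1$, $C>2$ and set $c_1=(1-\alpha)e(H)/C$, $c_2=(1-\alpha)e(H)/2$; both choices make the required bounds on $M$ and the inequality $c_1<c_2$ hold.
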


\begin{proof}
Let $\alpha < 1$ and $\beta$ be the positive constants whose existence is ensured by Lemma~\ref{lem::manyHgnm2} with $C=1$ and $\eps=1/4$. Let
$$
c_1 = (1-\alpha) e(H) \quad \text{and} \quad c_2 = 2(1-\alpha)e(H)
$$
and suppose that $c_1 s^{1/m_2(H)} \leq b+1 \leq c_2 s^{1/m_2(H)}$. Let $M = \lfloor (1-\alpha)e(H)s^2/(b+1) \rfloor$. Since $H$ contains a cycle, it follows that $m_2(H) > 1$ and thus $b = o(s)$ and $M = \omega(1)$. Furthermore 
$$
M>\frac{1-\alpha}{2(b+1)}\,e(H)\,s^2\ge \frac{1-\alpha}{2c_2}\,e(H)\,s^{2-1/m_2(H)} = s^{2-1/m_2(H)}/4
$$
and
$$
M\le\frac{1-\alpha}{b+1}\,e(H)\,s^2\le \frac{1-\alpha}{c_1}\,e(H)\,s^{2-1/m_2(H)}=s^{2-1/m_2(H)}.
$$ 
Thus, by Lemma~\ref{lem::manyHgnm2}, the probability that ${\mathbb G}(H,n,M)$ contains a sparse $H$-family with at least $\beta \E(X_H)$ copies of $H$ is greater than $1 - \alpha^{M}$. 

Similarly to the proof of Corollary~\ref{cor::manyHnew}, we can now apply Theorem~\ref{th::smallFamily} with appropriate parameters to conclude that Waiter can force Client to build a sparse $H$-family consisting of at least $\beta' s^{v(H)}(b+1)^{-e(H)}$ copies of $H$.
\end{proof}

\section{Clique games} \label{sec::cliques}

In this section we will apply the general method we developed in the previous three sections to the special case of cliques. Before proving Theorem~\ref{th::manyK} we need one more lemma.

\begin{lemma} \label{lem::balan}
Let $k \geq 3$ be an integer and let $M_1 = \{e_1, \ldots e_{\lfloor k/2 \rfloor}\}$ and $M_2 = \{e_{\lfloor k/2 \rfloor + 1}, \ldots e_{k-2}\}$ be two edge disjoint matchings of $K_k$. For every $1 \leq i \leq k-2$, let $H_i = K_k \setminus \{e_1, \ldots, e_i\}$.
\begin{enumerate}[(i)]  
\item 
For every $1 \leq i \leq k-2$, if $k \neq 5$ or $i \neq 3$, then $g_1(H_i) = (v(H_i)-2)/e(H_i)$.
\item
$g_2(H_i) = (v(H_i)-2)/(e(H_i)-1)$ for every $1 \leq i \leq k-2$.
\end{enumerate}
\end{lemma}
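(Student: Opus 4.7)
The plan is to exploit one structural fact about the removed edge set $F := \{e_1, \ldots, e_i\}$: since $F$ is contained in the union of two edge-disjoint matchings, the subgraph it forms has maximum degree at most $2$. The key quantitative consequence is that for every $S \subseteq V(K_k)$ of size $j$, each of the $k-j$ vertices outside $S$ is incident to at most two edges of $F$, and hence
\[
e_F(S) \ge i - 2(k-j).
\]

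For part (ii), the candidates in the minimum defining $g_2(H_i)$ other than $H_i$ itself are exactly the cliques $K_j \subseteq H_i$ with $3 \le j \le k-1$, each contributing $(j-2)/(\binom{j}{2}-1) = 2/(j+1)$, while $H_i$ itself contributes $(k-2)/(e(H_i)-1)$. A short algebraic rearrangement shows that the desired inequality $2/(j+1) \ge (k-2)/(e(H_i)-1)$ is equivalent to $(k-j)(k-2) \ge 2i$. Now $K_j \subseteq H_i$ forces $e_F(V(K_j)) = 0$, so the counting bound gives $k - j \ge i/2$, and hence $(k-j)(k-2) \ge (i/2)(k-2) \ge 2i$ whenever $k \ge 6$. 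For $k \in \{3,4,5\}$ I would enumerate: for the small available values of $i$, either $H_i$ contains no $K_j$ with $j \ge 3$ (as for $k=3$, $i=1$ or $k=4$, $i=2$) or the inequality is verified by direct inspection of the largest clique in $H_i$, with the tightest case $k=5$, $i=3$ meeting the inequality with equality.

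For part (i), the value $(k-2)/e(H_i)$ is attained in the definition of $g_1(H_i)$ by the candidate $H'$ with $v(H')=2$ and $e(H')=0$, so it remains to show that every other admissible $H'$ contributes no larger a value. Every such $H'$ is a non-clique subgraph of $H_i$ with $j := v(H') \ge 3$ and $2 \le e(H') \le \binom{j}{2}-1$; after rearrangement the desired inequality becomes $(v(H')-2)/e(H') \ge (k-2)/e(H_i)$, and since its left-hand side is minimised when $e(H')$ is as large as possible, it suffices to consider $H' = K_j - e'$. This reduces the claim to $(k-j)(k-2) \ge 2(i-1)$. The containment $K_j - e' \subseteq H_i$ forces $e_F(V(K_j)) \le 1$, so the counting bound yields $k - j \ge (i-1)/2$, and hence $(k-j)(k-2) \ge 2(i-1)$ whenever $k \ge 6$. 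For $k \le 5$ a direct case check handles all situations except $k = 5$, $i = 3$: there $F$ consists of a path of length two together with a disjoint edge, and one can pick a $4$-set containing exactly one edge of $F$, so $K_4 - e' \subseteq H_3$ with $(k-j)(k-2) = 3 < 4 = 2(i-1)$. This is precisely why the lemma excludes this case.

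The main obstacle is the finite case analysis at $k \in \{3,4,5\}$; it is not conceptually deep but requires care, both in identifying which cliques and near-cliques actually embed into $H_i$ (a constraint coming from the two-matching structure of $F$) and in recognising that $k = 5$, $i = 3$ is the unique exception in (i). Everything else is forced by the simple degree-$2$ bound $\Delta(F) \le 2$.
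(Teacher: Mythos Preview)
Your argument for part~(ii) is correct; the counting bound $k-j \ge i/2$ for cliques $K_j \subseteq H_i$, together with a short check for $k \le 5$, does the job. This is close in spirit to the paper's proof, which computes the clique number $t_i^*$ of $H_i$ explicitly and then compares $2/(t_i^*+1)$ with $(k-2)/(e(H_i)-1)$; your use of the degree-$2$ bound is a clean substitute for that explicit computation.

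Part~(i), however, has a genuine gap. You reduce to the candidate $H' = K_j \setminus e'$ on the grounds that a smaller $e(H')$ only makes the target inequality $(j-2)/e(H') \ge (k-2)/e(H_i)$ easier; that reduction is fine as a \emph{sufficient} target. But you then prove this target by invoking the containment $K_j \setminus e' \subseteq H_i$, and that containment need not hold: for some $j$ every $j$-set $S$ satisfies $e_F(S) \ge 2$, so no copy of $K_j \setminus e'$ sits inside $H_i$. In that situation your counting argument gives no lower bound on $k-j$, and indeed the inequality $(k-j)(k-2) \ge 2(i-1)$ you are aiming for can fail. A concrete instance is $k=6$, $i=4$, $j=5$: here $F$ is a perfect matching of $K_6$ together with one further edge, every $5$-set carries at least two edges of $F$, and $(k-j)(k-2) = 4 < 6 = 2(i-1)$. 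The lemma is still true there --- the actual extremal $H'$ on $5$ vertices has only $8$ edges, and $3/8 > 4/11$ --- but your argument does not establish it. To repair your approach you would have to feed the sharper bound $e(H') \le \binom{j}{2} - e_F(S)$ (not just $\binom{j}{2}-1$) into the inequality and use $e_F(S) \ge i - 2(k-j)$ directly; this can be made to work but is noticeably more laborious.

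The paper sidesteps this difficulty by a different route: using only $e(H') \le \binom{j}{2}-1$ it first shows that any offending $H'$ must have $j = k-1$ and $i > k/2$, and then uses $\delta(H_i) \ge k-3$ (another consequence of $\Delta(F) \le 2$) to bound $e(H_i) - e(H') \ge k-3$ from below, which forces $k=5$, $i=3$ as the sole exception.
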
  

\begin{proof}
Starting with (i), let us fix an integer $k \geq 3$. It is an immediate consequence of the definition of $g_1(H)$ that $g_1(H_i) \geq (v(H_i)-2)/e(H_i)$ for every $1 \leq i \leq k-2$. Suppose that $g_1(H_i) > (v(H_i)-2)/e(H_i)$ holds for some $1 \leq i \leq k-2$. It thus follows by the definition of $g_1$ that there exists some non-complete graph $H' \subsetneq H_i$ with at least two edges such that 
\begin{equation} \label{eq::t1}
\frac{v(H_i) - 2}{e(H_i)} < \frac{v(H_i) - v(H')}{e(H_i) - e(H')} \,.
\end{equation}
A straightforward calculation  shows that
\begin{equation} \label{eq::t2}
\frac{e(H')}{v(H')-2} > \frac{e(H_i)}{v(H_i)-2} \,.
\end{equation}
Let $t = v(H')$. By the definitions of $H_i$ and $H'$ and by~\eqref{eq::t2} we have $e(H')\leq \binom t2-1$, $v(H_i) = k$, $e(H_i) = \binom{k}{2} - i$ and $k > t \geq 3$. Hence \eqref{eq::t2} implies that
$$ 
\frac{\binom{t}{2} - 1}{t-2} > \frac{\binom{k}{2} - i}{k-2} \,,
$$
and thus 
\begin{equation} \label{eq::tnew}
\frac{t+1}{2} > \frac{k+1}{2} - \frac{i-1}{k-2} \,.
\end{equation}
Since $i \leq k-2$ and $t < k$, it follows that
$$
k-1 \geq t > k - \frac{2(i-1)}{k-2} > k-2 \,,
$$
where the second inequality holds by~\eqref{eq::tnew}. These inequalities can only hold if
\begin{equation} \label{eq::t3}
t = k-1 \textrm{  and  } i > k/2 \,.
\end{equation}

Consequently, $v(H_i) - v(H') = 1$ and, since $\delta(H_i) \geq k-3$, we have $e(H_i) - e(H') \geq k-3$. Thus, by~\eqref{eq::t1} we have 
\begin{equation}\label{eq::next}
k-3 \leq e(H_i) - e(H') = \frac{e(H_i)-e(H')}{v(H_i)-v(H')} < \frac{e(H_i)}{v(H_i)-2} = \frac{k+1}{2} - \frac{i-1}{k-2} \,.
\end{equation}
It is easy to verify that the set of inequalities~\eqref{eq::t3}, \eqref{eq::next} and $i\leq k-2$
is satisfied if and only if $k=5$, $i=3$, and $H' = K_4 \setminus \{e\}$ for some edge $e$.  

Summarizing, we have shown that, if $k \neq 5$, then $g(H_i) = (v(H_i)-2)/e(H_i)$ holds for every $1 \leq i \leq k-2$, and if $k = 5$, then then $g(H_i) = (v(H_i)-2)/e(H_i)$ holds for every $i \in [k-2] \setminus \{3\}$. This proves (i).

Next, we prove (ii). It is easy to verify that our claim holds for $k \in \{3,4\}$; hence, from now on we assume that $k \geq 5$. For every $1 \leq i \leq k-2$, let $t_i^*$ denote the largest order of a clique in $H_i$. It is easy to see that $t_i^* = k - i$ for every $1 \leq i \leq k/2$ and that $t_i^* = k - \lfloor k/2 \rfloor$ for every $k/2 < i \leq k - 2$. In particular, since $k \geq 5$, we have $t_i^* \geq 3$ for every $1 \leq i \leq k-2$ and so
$$
\min \left\{\frac{v(K_t)-2}{e(K_t)-1} : K_t\subsetneq H_i, t \geq 3\right\} = \frac{2}{t_i^* + 1}.
$$
By the definition of $g_2$ we then have   
$$
g_2(H_i) = \min\left\{\frac{k-2}{\binom{k}{2} - i - 1}, \frac{2}{t_i^* + 1}\right\}.
$$

A straightforward calculation shows that, since $k \geq 5$, we have
$$
\frac{k-2}{\binom{k}{2} - i - 1} \leq \frac{2}{k-i+1}
$$ 
for every $1 \leq i \leq k/2$ and 
$$
\frac{k-2}{\binom{k}{2} - i - 1} \leq \frac{2}{k - \lfloor k/2 \rfloor + 1}
$$ 
for every $k/2 < i \leq k-2$.

We conclude that  
$$
g_2(H_i) = \frac{k-2}{\binom{k}{2} - i - 1} = \frac{v(H_i) - 2}{e(H_i) - 1} 
$$
holds for every $1 \leq i \leq k-2$ as claimed. 
\end{proof}

We can now prove the main result of this section.

\begin{proof}{\textbf{of Theorem~\ref{th::manyK}}} 
For every positive integer $b$ and every $k \geq 3$, it follows by Theorem~\ref{th::BESH}(i) that $S(K_k, n, b) = O \left(n^k \cdot (b+1)^{- \binom{k}{2}} \right)$. This proves the upper bound in~\ref{th::manyK}(i).  

The remainder of this proof is dedicated to the lower bounds. Starting with (i), assume that $k\neq 5$. Assume further that $k \geq 4$ (for technical reasons, we will handle the simple case $k=3$ separately). Let $s = \lfloor n/k \rfloor$ and let $G = (V,E)$ be a copy of ${\mathbb B}_{K_k}(V_1, \ldots, V_k;s)$ in $K_n$. It clearly suffices to prove that Waiter can force Client to build the required number of copies of $K_k$ when playing on $E$. 

Since $m_2(K_k) = \left(\binom{k}{2} - 1 \right)/(k-2)$, it follows by Corollary~\ref{cor::denseH} that there are positive constants $c_0$ and $\beta$ such that if 
\begin{equation} \label{eq::b0}
2 \leq b+1 \leq c_0 s^{(k-2)/\left(\binom{k}{2} - 1 \right)} \,,
\end{equation}
then $S(K_k, n, b) \geq \beta s^k (b+1)^{- \binom{k}{2}} = \Theta \left(n^k \cdot (b+1)^{- \binom{k}{2}} \right)$. 

Hence, from now on we assume that $b = \Omega \left(s^{(k-2)/\left(\binom{k}{2} - 1 \right)} \right)$. For every $1 \leq i \leq k-2$, let $e_1, \ldots, e_i$ and $H_i$ be as in Lemma~\ref{lem::balan}. 

It follows by Corollary~\ref{cor::manyHnew} that for every $1 \leq i \leq k-2$ there are positive constants $c_1^i$, $c_2^i$ and $\beta_i$ such that if 
\begin{equation} \label{eq::prevHi}
c_1^i s^{g_1(H_i)} \leq b+1 \leq c_2^i s^{g_2(H_i)} \,,
\end{equation}
then, playing on $E$, Waiter can force Client to build a sparse $H_i$-family $\mathcal{A}_i$ consisting of at least $\beta_i s^{v(H_i)} (b+1)^{-e(H_i)}$ copies of $H_i$.

For every $1 \leq i \leq k-2$ let $e_i = u_i u'_i$ and let $U_i$ and $U'_i$ denote the corresponding pairs of vertex sets in ${\mathbb B}_{K_k}(V_1, \ldots, V_k;s)$. Fix some $1 \leq i \leq k-2$ for which~\eqref{eq::prevHi} is satisfied (assuming such an $i$ exists). We are now ready to describe Waiter's strategy (for this $i$); it is divided into the following two stages. 

\bigskip

\noindent \textbf{Stage I:} Without offering any edges of $\bigcup_{j=1}^i E_G(U_j, U'_j)$, Waiter forces Client to build a sparse $H_i$-family $\mathcal{A}_i$ consisting of at least $\beta_i s^{v(H_i)} (b+1)^{-e(H_i)}$ copies of $H_i$.        

\bigskip

\noindent \textbf{Stage II:} This stage is further divided into $i$ phases. Let $\mathcal{A}_i^0 = \mathcal{A}_i$ and, for every $1 \leq j \leq i$, at the end of Phase $j$, let ${\mathcal A}_i^j$ denote the family of canonical copies of $H_i \cup \{e_1, \ldots, e_j\}$ in Client's graph. For every $1 \leq j \leq i$, the $j$th phase lasts exactly $\lfloor |\mathcal{A}_i^{j-1}|/(b+1) \rfloor$ rounds. For every $1 \leq j \leq i$ and every $\ell$ which satisfies 
$$
1 \leq \ell \leq \left\lfloor \frac{|\mathcal{A}_i^{j-1}|}{b+1}\right\rfloor,
$$ 
in the $\ell$th round of Phase $j$, Waiter offers Client $b+1$ free edges $x_1 x'_1, \ldots, x_{b+1} x'_{b+1} \in E_G(U_j, U'_j)$, such that for every $1 \leq t \leq b+1$, $x_t$ and $x'_t$ are vertices of a copy of $H_i$ in $\mathcal{A}_i^{j-1}$, corresponding to the vertices $u_i$ and $u'_i$. 

\bigskip

As noted above, Corollary~\ref{cor::manyHnew} implies that Waiter can play according to Stage I of the proposed strategy. Moreover, since no two graphs in $\mathcal{A}_i$ share a pair of non-adjacent vertices, Waiter can play according to Stage II of the proposed strategy as well. It remains to show that, by doing so, he forces Client to build $\Omega \left(n^k \cdot (b+1)^{-\binom{k}{2}} \right)$ copies of $K_k$. Since $H_i \cup \{e_1, \ldots, e_i\} \cong K_k$, it suffices to prove that $|\mathcal{A}_i^i| \geq \Omega \left(n^k \cdot (b+1)^{-\binom{k}{2}} \right)$. Since, for every $1 \leq j \leq i$, no two graphs in $\mathcal{A}_i^{j-1}$ share a pair of non-adjacent vertices, it follows by the description of Stage II of the proposed strategy that $|\mathcal{A}_i^j| \geq \lfloor |\mathcal{A}_i^{j-1}|/(b+1) \rfloor \geq |\mathcal{A}_i^{j-1}|/(b+1) - 1$. Therefore  
$$
|\mathcal{A}_i^i| \geq \frac{|\mathcal{A}_i|}{(b+1)^i} - i \geq \frac{\beta_i s^{v(H_i)}}{(b+1)^{e(H_i)+i}} - i = \frac{\beta_i s^{k}}{(b+1)^{\binom{k}{2}}} - i = \Theta \left(n^k(b+1)^{-\binom{k}{2}} \right) \,.
$$

\bigskip      

Note that by Lemma~\ref{lem::balan} the inequalities~\eqref{eq::prevHi} are equivalent to
\begin{equation} \label{eq::Hi}
c_1^i s^{(v(H_i)-2)/e(H_i)} \leq b+1 \leq c_2^i s^{(v(H_i)-2)/(e(H_i)-1)} \,.
\end{equation}
We have thus proved that Waiter can force Client to build the required number of copies of $K_k$ provided that $b$ satisfies~\eqref{eq::b0} or~\eqref{eq::Hi} for some $1 \leq i \leq k-2$, that is, provided that
\begin{eqnarray*} 
b+1 &\in& \left[2, c_0 s^{(k-2)/\left(\binom{k}{2} - 1 \right)} \right] 
\cup \bigcup_{i=1}^{k-2} \left[c_1^i s^{(v(H_i)-2)/e(H_i)}, c_2^i s^{(v(H_i)-2)/(e(H_i)-1)} \right] \\
&=& \left[2,c_0 s^{(k-2)/\left(\binom{k}{2} - 1 \right)} \right] \cup \bigcup_{i=1}^{k-2} \left[c_1^i s^{(k-2)/\left(\binom{k}{2} - i \right)}, c_2^i s^{(k-2)/\left(\binom{k}{2} - i - 1 \right)} \right]\,.
\end{eqnarray*}

Since for $i = k-2$ we have $s^{(k-2)/\left(\binom{k}{2} - i - 1 \right)} = s^{2/(k-1)}$, we conclude that there exists a positive constant $c$ for which the above union of intervals of values of $b+1$ covers the interval $\left[2, c_2^{k-2} s^{2/(k-1)} \right] = \left[2, c n^{2/(k-1)} \right]$ with the possible exception of the intervals 
$$
\left(c_i n^{(k-2)/\left(\binom{k}{2} - i \right)}, C_i n^{(k-2)/\left(\binom{k}{2} - i \right)} \right)
$$ 
for some values of $1 \leq i \leq k-2$ and some positive constants $c_i < C_i$. However, by the bias-monotonicity of Waiter-Client games, we have $S(K_k, n, b) \geq S(K_k, n, b')$ if $b \leq b'$. Therefore, if $b$ is of order $n^{(k-2)/\left(\binom{k}{2} - i \right)}$ for some $1 \leq i \leq k-2$ and $b' > b$ is such that $b' + 1 = \left \lceil C_i n^{(k-2)/\left(\binom{k}{2} - i\right)} \right\rceil$, then 
$$
S(K_k, n, b) \geq S(K_k, n, b') = \Omega \left(n^k \cdot (b'+1)^{- \binom{k}{2}} \right) = \Omega \left(n^k \cdot (b+1)^{- \binom{k}{2}} \right) \,.
$$

Now, assume that $k = 3$ and let $b \leq c n$, where $c > 0$ is a sufficiently small constant. We will describe a strategy for Waiter to force Client to build $\Theta(n^3/b^3)$ copies of $K_3$ in the $(b : 1)$ game on ${\mathbb B}_{K_3}(V_1, V_2, V_3; s)$, where $s = \lfloor n/3 \rfloor$. This strategy consists of the following three stages.  

\bigskip

\noindent \textbf{Stage I:} Let $V_1 = \{x_1, \ldots, x_s\}$. For every $1 \leq i \leq s$ and $1 \leq j \leq \lfloor s/(b+1) \rfloor$, in the $((i-1) \lfloor s/(b+1) \rfloor + j)$th round, Waiter offers Client $b+1$ arbitrary free edges of $E(x_i, V_2)$. Waiter then proceeds to Stage II.        

\bigskip

\noindent \textbf{Stage II:} Let $V_2 = \{y_1, \ldots, y_s\}$. For every $1 \leq i \leq s$ and $1 \leq j \leq \lfloor s/(b+1) \rfloor$, in the $((i-1) \lfloor s/(b+1) \rfloor + j)$th round of this stage, Waiter offers Client $b+1$ arbitrary free edges of $E(y_i, V_3)$. Waiter then proceeds to Stage III.       

\bigskip

\noindent \textbf{Stage III:} For every $x \in V_1$ and $z \in V_3$ let $t(xz)$ denote the number of vertices $y \in V_2$ such that both $xy$ and $yz$ were claimed by Client. Let $e_1, \ldots, e_{s^2}$ be a linear ordering of the edges of $E(V_1, V_3)$ such that, for every $1 \leq i \neq j \leq s^2$, $e_i < e_j$ if and only if $t(e_i) \geq t(e_j)$. For every $1 \leq \ell \leq \lfloor s^2/(b+1) \rfloor$, in the $\ell$th round of this stage, Waiter offers Client the edges $e_{(\ell-1) (b+1) + 1}, \ldots, e_{\ell (b+1)}$.          

\bigskip

At the end of Stage II, every vertex of $V_1$ has $\lfloor s/(b+1) \rfloor$ neighbours in $V_2$ and every vertex of $V_2$ has $\lfloor s/(b+1) \rfloor$ neighbours in $V_3$. Hence, Client's graph contains $s \cdot \left(\lfloor s/(b+1) \rfloor \right)^2 \geq s^3/(5 b^2)$ canonical copies of $K_{1,2}$. Note that $t(e) \leq \lfloor s/(b+1) \rfloor \leq s/b$ for every $e \in E(V_1, V_3)$. Hence, by rejecting $b$ of the edges $e_1, \ldots, e_{b+1}$ in the first round of Stage III, Client avoids closing at most $b \cdot s/b = s$ copies of $K_{1,2}$ into triangles. Moreover, for every $1 \leq i < j \leq \lfloor s^2/(b+1) \rfloor$, if $e_i$ is the edge Client claims in the $i$th round of Stage III and $e_j$ is some edge Waiter offers Client in the $j$th round of Stage III, then $t(e_i) \geq t(e_j)$. We conclude that the number of (canonical) triangles Client is forced to build is at least 
$$
\frac{1}{b+1} \cdot \left(s^3/(5 b^2) - s \right) = \Theta(n^3/b^3) \,,
$$    
where the last equality holds for sufficiently small $c$.

This concludes the proof of Part (i).

The proof of the lower bound in Part (ii) is based on a similar idea, but is actually much simpler. Since we just want to prove that $S(K_5, n, b) > 0$, we can use the bias monotonicity of Waiter-Client games to assume that $b = c n^{2/(5-1)} = c n^{1/2}$. Waiter follows the same strategy as in Part (i) (for $k \geq 4$) with $i = k-2 = 3$. It is easy to verify that $H_3$ is $m_2$-balanced and that $n^{1/m_2(H_3)} = n^{1/2}$. It thus follows by Corollary~\ref{cor::manyHnew2} that Waiter can follow Stage I of the proposed strategy. Using the same arguments as in the proof of Part (i), one can show that Waiter can follow Stage II of the proposed strategy as well and that, by doing so, he forces at least one copy of $K_5$ in Client's graph. 
\end{proof}

Finally, let us comment on the technical problems arising when one tries to prove the lower bound in (i) for $k=5$. A careful analysis of the proofs of this section shows that, for $K_5$, Conjecture~\ref{conj} is true for almost every value of $b$. Unfortunately, our method fails if 
$$
\Omega(n^{(v(H_3)-2)/e(H_3)}) = \Omega(n^{3/7}) = b = o(n^{1/m(K_5)}) = o(n^{1/2}).
$$
The problem is that, for these values of $b$, we cannot find a ``good'' spanning subgraph of $K_5$. Although we can, in fact, fill in the gap using rather complicated ad hoc methods, we could not find any elegant and compact way to adjust our general method to handle this case as well.

\section*{Acknowledgment}

We would like to thank the anonymous referees for helpful comments.

\end{document}